	\newcolumntype{Y}{>{\centering\arraybackslash}X}
\let\LN@align\align
\let\LN@endalign\endalign
\renewcommand{\align}{\linenomath\LN@align}
\renewcommand{\endalign}{\LN@endalign\endlinenomath}
\let\LN@gather\gather
\let\LN@endgather\endgather
\renewcommand{\gather}{\linenomath\LN@gather}
\renewcommand{\endgather}{\LN@endgather\endlinenomath}
\DeclareMathOperator{\e}{e}
\DeclareMathOperator{\Real}{Re}
\newcommand{\revision}[1]{{\color{black} #1}}
\begin{document}
\linenumbers 

\title{On ``Optimal'' $h$-Independent Convergence of Parareal and MGRIT using Runge-Kutta Time Integration}

\author[1]{Stephanie Friedhoff}

\author[2]{Ben S. Southworth*}

\authormark{Friedhoff and Southworth}

\address[1]{\orgdiv{Department of Mathematics}, \orgname{Bergische Universit\"at Wuppertal}, \orgaddress{\state{Wuppertal}, \country{Germany}}}
\address[2]{\orgdiv{Department of Applied Mathematics}, \orgname{University of Colorado at Boulder}, \orgaddress{\state{Colorado}, \country{USA}}}
\corres{*Ben Southworth, Department of Applied Mathematics, University of Colorado at Boulder; 526 UCB Boulder, CO 80309. \email{ben.s.southworth@gmail.com}}

\abstract[Summary]{
\revision{Although convergence of the Parareal and multigrid-reduction-in-time (MGRIT) parallel-in-time algorithms
is well studied, results on their optimality is limited. Appealling to recently derived tight bounds of
two-level Parareal and MGRIT convergence, this paper proves (or disproves) $h_x$- and $h_t$-independent
convergence of two-level Parareal and MGRIT, for linear problems of the form $\mathbf{u}'(t) +
\mathcal{L}\mathbf{u}(t) = f(t)$, where $\mathcal{L}$ is symmetric positive definite and Runge-Kutta
time integration is used. The theory presented in this paper also encompasses analysis of some modified 
Parareal algorithms, such as the $\theta$-Parareal method, and shows that not all Runge-Kutta schemes are equal
from the perspective of parallel-in-time. Some schemes, particularly L-stable methods, offer significantly better
convergence than others as they are guaranteed to converge rapidly at both limits of small and large $h_t\xi$, 
where $\xi$ denotes an eigenvalue of $\mathcal{L}$ and $h_t$ time-step size.
On the other hand, some schemes do not obtain $h$-optimal
convergence, and two-level convergence is restricted to certain regimes. In certain cases, an $\mathcal{O}(1)$
factor change in time step $h_t$ or coarsening factor $k$ can be the difference between convergence factors
$\rho\approx0.02$ and divergence! The analysis is extended to skew symmetric operators as well, which
cannot obtain $h$-independent convergence and, in fact, will generally not converge for a sufficiently
large number of time steps. Numerical results confirm the analysis in practice and
emphasize the importance of a priori analysis in choosing an effective coarse-grid scheme and coarsening factor.
A Mathematica notebook to perform a priori two-grid analysis is available at}
\url{https://github.com/XBraid/xbraid-convergence-est}. }

\keywords{multigrid, parallel-in-time, parareal, convergence, Runge-Kutta, multigrid-reduction-in-time}

\jnlcitation{\cname{\author{S. Friedhoff} and 
\author{B.S. Southworth}} (\cyear{2019}), 
\ctitle{On ``Optimal'' $h$-Independent Convergence of Parareal and 
MGRIT using Runge-Kutta Time Integration}, \cjournal{Numer. Linear Alg. Appl.}, \cvol{2019;00:0--0}.}

\maketitle

\section{Introduction}\label{sec:intro}
Starting with the seminal works of Runge \cite{CRunge_1895} and Kutta \cite{WKutta_1901} around 1900, many
Runge-Kutta methods have been developed for solving large, stiff and non-stiff systems of ordinary differential equations (ODEs)
\cite{EHairer_etal_1993,EHairer_GWanner_2010,JCButcher_2016}. For stiff problems, implicit Runge-Kutta (IRK) methods
\cite{EHairer_GWanner_2010} are of interest due to their good stability properties. The computational cost of IRK methods,
however, is high, because they require the solution of a large coupled system of equations for each time step. Diagonally or
singly implicit Runge-Kutta (DIRK or SDIRK) methods \cite{KBurrage_1995,EHairer_GWanner_2010,JCButcher_2016} make
this cost more tractable by reducing the single large system to a smaller system for each Runge-Kutta stage. Nevertheless,
traditional time stepping remains a sequential process. With emerging computing architectures largely increasing in number
of total processors, it is desirable to be able to distribute the process of time integration across multiple processors. Space-time
and time-parallel methods offer a level of parallelism to this process\cite{Gander2015_Review} by decomposing the temporal
domain into subintervals and performing computations in different subintervals simultaneously.

In this paper, we consider the Parareal \cite{JLLions_etal_2001a} and multigrid-reduction-in-time (MGRIT) \cite{RDFalgout_etal_2014}
algorithms. Both methods have been extensively studied
\cite{GBal_2005,MJGander_SVandewalle_2007,MJGander_EHairer_2008,SFriedhoff_SMacLachlan_2015a,VADobrev_etal_2017,BSSouthworth_2018a}
and applied to a variety of problems including fluid dynamics applications\cite{PFFischer_etal_2005}, plasma simulations \cite{DSamaddar_etal_2010},
and linear elasticity \cite{AHessenthaler_etal_2018}. While Parareal can be interpreted in a variety of frameworks of numerical methods \cite{MJGander_SVandewalle_2007,MJGander_etal_2018} and while there are clear and important differences in comparison with MGRIT,
in this paper, we consider Parareal as a two-level time-multigrid scheme and, thus, we view Parareal and MGRIT as two methods within the
same broader family. Both algorithms combine time stepping on the initially discretized temporal domain, referred to as the fine grid, with time
stepping on a coarse temporal mesh that uses a larger time step. One interesting property of both methods is that convergence is guaranteed
in a finite number of steps \cite{MJGander_SVandewalle_2007,RDFalgout_etal_2014,MJGander_etal_2018}. From the multigrid perspective,
this property is important, but taking it a step further, namely showing that Parareal and MGRIT are optimal in the sense that they can obtain
convergence for a discrete problem that is independent of the spatial or temporal mesh spacing and problem size, is of great interest. Such
$h$-independent multigrid convergence was shown for elliptic problems as early as the work by Brandt in 1977\cite{brandt1977multi}, and
has since been generalized to other contexts (for example, see \cite{SCBrenner_1999,SSerra_Capizzano_CTablino_Possio_2004,AArico_MDonatelli_2007}).

For linear problems  of the form $\mathbf{u}'(t) + \mathcal{L}\mathbf{u}(t) = f(t)$, where $\mathcal{L}$ is symmetric positive definite (SPD),
numerical experiments have suggested optimality of Parareal and MGRIT using backward Euler time integration\cite{GAStaff_EMRonquist_2005,MJGander_SVandewalle_2007,RDFalgout_etal_2014}.
Furthermore, for such problems, $h$-independent convergence of Parareal has been considered previously in several papers, where
specific combinations of coarse- and fine-grid time propagators are analyzed. In particular, Mathew et al.\cite{TPMathew_etal_2010}
showed $h$-independent convergence of Parareal for backward Euler time integration on the coarse grid and either backward Euler
or Crank-Nicolson on the fine grid. For backward Euler time integration on the coarse grid, Wu\cite{SLWu_2015} proved $h$-independent
convergence of Parareal for the parameter-dependent TR/BDF2 scheme\cite{Bank_etal_1985}, provided that the parameter of the TR/BDF2
method is chosen in a specific regime, and the two-stage DIRK method under the assumption that the coarse-grid time-step size is an even
multiple of the fine-grid time-step size. Wu and Zhou\cite{SLWu_TZhou_2015} analyzed optimality of three Parareal solvers, all using
backward Euler time integration on the coarse grid and the trapezoidal rule, third-order DIRK method, and fourth-order Gauss Runge-Kutta
method on the fine grid. For these three schemes, $h$-independent convergence can be obtained only if the ratio between the fine- and
coarse-grid time-steps is even and greater than or equal to a method-specific threshold, given in terms of the coarse- and fine-grid time-step sizes,
and the maximum eigenvalue of the SPD operator $\mathcal{L}$. Although each of these results is interesting, each also required somewhat
lengthy analyses to derive, and there remains a lack in broader understanding of $h$-independent convergence of MGRIT and Parareal. 

Recently, two-level theory was developed by Southworth \cite{BSSouthworth_2018a}
that provides tight bounds on linear Parareal and two-level MGRIT for arbitrary time-propagation operators.
\revision{For diagonalizable operators, the upper bounds derived for the convergence of Parareal and MGRIT in
\cite{BSSouthworth_2018a} are effectively equivalent to those first developed in \cite{VADobrev_etal_2017}. A lower
bound derived in \cite{BSSouthworth_2018a} proves the upper bounds in \cite{VADobrev_etal_2017,BSSouthworth_2018a}
are tight to order $\mathcal{O}(1/N_c)$, for $N_c$ coarse-grid time steps. For SPD operators, the difference in upper and
lower bounds is very small. However, as we will see later, for complex/purely imaginary eigenvalues, the number of
coarse-grid time steps, $N_c$, matters. } The central aim of this paper
is to use these tight bounds to (i) analytically show $h$-independent convergence of two-level Parareal and MGRIT for linear
problems of the form $\mathbf{u}'(t) + \mathcal{L}\mathbf{u}(t) = f(t)$, where $\mathcal{L}$ is SPD and Runge-Kutta time
integration is used, \revision{and (ii) show that $h$-independent convergence can largely not be obtained for skew-symmetric
spatial operators.} Thus, this paper highlights the practical implications of the new theory, particularly for parabolic-type
PDEs with SPD spatial operators, \revision{and some purely hyperbolic PDEs represented by a skew-symmetric operator.}

The relatively clean formulae derived in \cite{VADobrev_etal_2017,BSSouthworth_2018a} allow for straightforward
convergence analysis of arbitrary coarse- and
fine-grid propagators, coarsening factors, and time-step sizes. Therefore, this paper can be seen as a substantial generalization of previous
works by Mathew et al.\cite{TPMathew_etal_2010} and by Wu and Zhou\cite{SLWu_2015,SLWu_TZhou_2015} on $h$-independent convergence
of Parareal. Our theoretical framework also encompasses analysis of various modified Parareal algorithms, such as the (scalar) $\theta$-Parareal
method \cite{GAriel_etal_2018a} and modified A-/L-stable fine-grid propagators introduced in \cite{SLWu_2016}.

Because the underlying two-grid bounds used in this work are tight, they allow us to easily demonstrate that not all Runge-Kutta schemes are
equal from the perspective of parallel-in-time. The dynamics of ``good'' coarse- and fine-grid time integration schemes for Parareal and MGRIT
is complicated, and in certain cases seemingly small changes in coarsening factor or time-step size can have dramatic effects on convergence
of MGRIT. Furthermore, properties that yield fast Parareal or MGRIT convergence are not necessarily consistent with the properties desired in
a serial integration scheme, making the design of an effective Parareal/MGRIT algorithm not necessarily intuitive. The work in this paper presents a
straightforward method to develop effective MGRIT and Parareal methods using a priori analysis, including publicly available Mathematica
code to test arbitrary Runge-Kutta schemes. 

The remainder of this paper is organized as follows. Section \ref{sec:pint_rk} reviews important definitions and properties of Runge-Kutta
methods and describes the Parareal and MGRIT algorithms. Two-level convergence results for Parareal and MGRIT are summarized in
the beginning of Section \ref{sub:parareal_mgrit_conv}, and we proceed to state and prove new theoretical results relating convergence
of Parareal and MGRIT with general Runge-Kutta theory. Sections \ref{sec:erk} and \ref{sec:irk} build on these results to present tight
convergence bounds for a wide variety of explicit and implicit Runge-Kutta methods, respectively. Numerical experiments, presented in
Section \ref{sec:ex} demonstrate that the analysis applies in the practical setting.  \revision{All numerical results in this paper use the \textit{same}
integration scheme on the fine and coarse grid, unless otherwise specified.} \revision{The difficulties posed by imaginary eigenvalues
and corresponding lack of optimal convergence of Parareal and MGRIT is discussed in Section \ref{sec:imag},} and
concluding remarks are given in Section \ref{sec:conclusions}.

\section{Parallel-in-time algorithms using Runge-Kutta time integration}\label{sec:pint_rk}

\subsection{Model problem and Runge-Kutta methods}\label{sub:rk_theory}

Consider the linear system of ODEs
\begin{align}\label{eq:problem}
	\mathbf{u}'(t) + \mathcal{L}\mathbf{u}(t) = f(t) \quad\text{in }(0,T], \quad \mathbf{u}(0) = \mathbf{u}_0,
\end{align}
where $\mathcal{L}\in\mathbb{R}^{M\times M}$ is a discrete, time-independent,  SPD operator on some domain $\Omega$ and
$f$ a solution-independent forcing term $f: [0,T]\to\mathbb{R}^M$. Such problems arise, for example, in the numerical approximation
of partial differential equations (PDEs), where $\mathcal{L}$ is the discretization operator of an open spatial domain $\Omega\subseteq\mathbb{R}^d$
using $M$ degrees of freedom. We discretize \eqref{eq:problem} on an equidistant time grid consisting 
of $N\in\mathbb{N}$ time intervals with a time-step size $h_t=T/N$, using an $s$-stage Runge-Kutta scheme
characterized by the Butcher tableaux 
\[
	\renewcommand\arraystretch{1.2}
	\begin{array}
	{c|c}
	\mathbf{c}_0 & A_0\\
	\hline
	& \mathbf{b}_0^T
	\end{array}.
\]
Given the Runge-Kutta matrix $A_0 = (a_{ij})$, weight vector $\mathbf{b}_0^T = (b_1, \ldots, b_s)^T$, and 
nodes $\mathbf{c}_0 = (c_0, \ldots, c_s)$, the discrete solution is propagated in time via
\begin{align}\label{eq:rk_time_int}
	\mathbf{u}_{n+1} = \Phi\mathbf{u}_n + \sum_{j=1}^s\mathcal{R}_j(-h_t\mathcal{L})h_t f(t_n + c_jh_t), \quad n = 0, \ldots, N-1.
\end{align}
Here, the time-propagation operator is given by
\begin{align}\label{eq:phi}
	\Phi = I - h_t\mathbf{b}_0^T\otimes I \left(I + h_t A_0\otimes \mathcal{L}\right)^{-1} (\mathbf{1}_s \otimes \mathcal{L})
\end{align}
for rational functions $\mathcal{R}_j$, defined as
\[
	\mathcal{R}_j(z) = \mathbf{b}^T(I-zA_0)^{-1}\mathbf{e}_j, \quad j = 1, \ldots, s,
\]
where $\mathbf{1}_s$ and $\mathbf{e}_j$ denote a vector of ones of length $s$ and the $j$-th unit vector, respectively.

Note that the time-stepping operator $\Phi$ is a rational function of $\mathcal{L}$ and, thus, it can be diagonalized with the eigenvectors of $\mathcal{L}$. 
If $\xi$ is an eigenvalue of $\mathcal{L}$, the corresponding eigenvalue of $\Phi$ is given by
\begin{align}\label{eq:lambda}
	\lambda(h_t\xi) = 1 - h_t\xi\mathbf{b}_0^T(I + h_t\xi A_0)^{-1}\mathbf{1}.
\end{align}
Interestingly, this is exactly the stability function for a given Runge-Kutta method. Recall the definition of 
stability for general Runge-Kutta schemes \cite{JCButcher_2016,EHairer_GWanner_2010}:
\begin{definition}[Stability]\label{def:stability}
	Let $A$ be the Runge-Kutta matrix and $\mathbf{b}$ be the weight vector of a Runge-Kutta method. The complex-valued function
	\begin{align}\label{eq:stability_fcn}
		R_0(z) = \frac{\det(I-zA+z\mathbf{1}_s\mathbf{b}^T)}{\det(I-zA)} = \frac{P(z)}{Q(z)}
	\end{align}
	is called the \emph{stability function} of the Runge-Kutta method, where a method is stable for $z\in\mathbb{C}$ when $|R_0(z)|<1$.
	It can be interpreted as the numerical solution after one step for Dahlquist's test equation, given by
	\[
		y'(t) = \lambda y(t), \quad y(0) = 1, \quad z = h_t\lambda.
	\]
\end{definition}
Because our model problem is effectively a matrix-valued variant of the Dahlquist equation \eqref{eq:problem}, the time-stepping operator
$\Phi$ \eqref{eq:phi} indeed is given by the stability function evaluated at $z=-h_t\mathcal{L}$, that is, $\Phi = R_0(-h_t\mathcal{L})$
(in rational function form, the denominator can be thought of in terms of matrix inverses). 

For explicit Runge-Kutta (ERK) schemes, $A$ is strictly lower triangular and, thus, $\det(I-zA)=1$. As a consequence, the stability
function of $s$-stage ERK schemes is a polynomial of degree $\leq s$. For diagonally implicit Runge-Kutta (DIRK) methods, $A$
is lower triangular, and for singly diagonally implicit Runge-Kutta (SDIRK) methods, $A$ is lower triangular with constant diagonal.
Thus, the stability function of implicit $s$-stage Runge-Kutta methods is a rational function with polynomials $P$ and $Q$ of degree
$\leq s$ in the numerator and denominator.

We conclude this section by recalling two definitions and a theorem regarding Runge-Kutta methods, which prove
useful for further analysis in this paper \cite{JCButcher_2016,EHairer_GWanner_2010}. 

\begin{definition}[A-/L-stablity]
	A Runge-Kutta method with stability function $R_0(z)$ is called \emph{A-stable} if
	\[
		|R_0(z)| \leq 1, \quad\text{for all } z\in\mathbb{C} \text{ with } \Real(z) \leq 0.
	\]
	A method is called \emph{L-stable} if it is A-stable and if, in addition,
	\[
		\lim_{|z|\to\infty}R_0(z) = 0.
	\]
\end{definition}
\begin{definition}[Stiffly accurate]
A Runge-Kutta method of $s$ stages is called \emph{stiffly accurate} if $a_{sj} = b_j$ for all $j$\cite{prothero1974stability}.
\end{definition}
\begin{theorem}\label{th:exp_p}
	If the Runge-Kutta method with stability function $R_0(z)$ is of order $p$, then
	\[
		R_0(z) =1 - z + \tfrac{z^2}{2!} - \ldots + \tfrac{z^p}{p!} + \mathcal{O}(z^{p+1}),
	\]
	i.\,e., the stability function $R_0(z)$ is a rational approximation to $e^{-z}$ of order $p$.
\end{theorem}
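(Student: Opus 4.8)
The plan is to read off the expansion directly from the eigenvalue formula \eqref{eq:lambda}, using the order-$p$ hypothesis only through the ``tall-tree'' order conditions. A complementary, more conceptual route is available through the Dahlquist interpretation recorded in Definition \ref{def:stability}, which I would keep as a cross-check.

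For the direct route, first I would set $z = h_t\xi$ and expand $(I + zA_0)^{-1}$ for small $z$ as the convergent Neumann series $\sum_{k\ge0}(-z)^k A_0^k$. Substituting into \eqref{eq:lambda} gives $\lambda(z) = 1 + \sum_{j\ge1}(-1)^j z^j\,\mathbf{b}_0^T A_0^{j-1}\mathbf{1}$. The hypothesis that the method has order $p$ then supplies precisely the chain- (or tall-) tree order conditions $\mathbf{b}_0^T A_0^{k-1}\mathbf{1} = 1/k!$ for $k=1,\dots,p$. Inserting these identities makes the first $p+1$ coefficients coincide term by term with those of $e^{-z} = \sum_{k\ge0}(-z)^k/k!$, while every surviving term has degree $\ge p+1$; collecting those into a single remainder yields $R_0(z) = 1 - z + \tfrac{z^2}{2!} - \dots + \tfrac{z^p}{p!} + \mathcal{O}(z^{p+1})$, which is the claim. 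The alternating sign pattern is automatic here, since the decay form $+\mathcal{L}$ in \eqref{eq:problem} enters \eqref{eq:lambda} through $(I + zA_0)^{-1}$ and thus reproduces $e^{-z}$ rather than $e^{z}$.

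Alternatively, one can avoid writing out the order conditions: a Runge-Kutta method of order $p$ has local truncation error $\mathcal{O}(h_t^{p+1})$ for every sufficiently smooth problem, in particular for the scalar test equation $y'=-\lambda y$, $y(0)=1$. Its exact one-step solution is $e^{-z}$ with $z=h_t\lambda$, whereas its numerical one-step solution is exactly $R_0(z)$; subtracting gives $R_0(z)-e^{-z}=\mathcal{O}(z^{p+1})$, and Taylor-expanding $e^{-z}$ finishes the proof. I expect no genuine analytic difficulty: the only points deserving care are justifying convergence of the Neumann series (immediate for $|z|$ small, which suffices for the asymptotic statement), and invoking the tall-tree conditions as members of the full order-$p$ set — equivalently, in the second route, the standard fact that order $p$ yields an $\mathcal{O}(h_t^{p+1})$ local error uniformly in the problem, so that it legitimately specializes to the linear test equation.
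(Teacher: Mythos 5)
The paper itself offers no proof of Theorem \ref{th:exp_p}: it is recalled as a classical fact with citations to Butcher and to Hairer and Wanner, so there is no in-paper argument to compare against, and your job was to supply the standard one, which you do correctly. Your second route (order $p$ implies one-step error $\mathcal{O}(h_t^{p+1})$ on every smooth problem; specialize to the Dahlquist test equation, whose exact one-step flow is $e^{-z}$ and whose numerical one-step map is exactly $R_0(z)$) is precisely the textbook argument and is the cleaner of the two: it uses the order hypothesis only through its definition and needs no structural facts about the Butcher tableau. Your first route is also sound, with two fine points worth making explicit. First, the identities $\mathbf{b}_0^TA_0^{k-1}\mathbf{1} = 1/k!$, $k=1,\ldots,p$, are the tall-tree conditions written in terms of $A_0\mathbf{1}$ rather than the abscissae $\mathbf{c}_0$; the two coincide under the usual row-sum convention $\mathbf{c}_0 = A_0\mathbf{1}$, which you should either assume explicitly or sidestep by invoking the order conditions for autonomous problems, where the abscissae never enter. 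Second, for the linear test equation those particular conditions are essentially equivalent to the statement being proved---the standard way to derive them is the very Taylor comparison of your second route---so the first route is best viewed as a verification that the known order conditions reproduce the claim rather than as a genuinely independent argument. Finally, you correctly navigated a real wrinkle in the paper: Definition \ref{def:stability} normalizes $R_0$ so that it approximates $e^{z}$, while Theorem \ref{th:exp_p} writes the expansion of $e^{-z}$; your reading that the theorem's $R_0(z)$ is the eigenvalue $\lambda(z)$ of \eqref{eq:lambda} (the stability function evaluated at $-z$, matching the decay problem $\mathbf{u}'=-\mathcal{L}\mathbf{u}$ in \eqref{eq:problem}) is the consistent interpretation, and your sign bookkeeping in the Neumann expansion is right.
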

The importance of stiffly accurate methods for very stiff ODEs is discussed in Section 2.3 
of Kennedy and Carpenter\cite{CAKennedy_MACarpenter_2016}.

\subsection{Parareal and MGRIT}\label{sub:parareal_mgrit}

Parareal \cite{JLLions_etal_2001a} can be interpreted in a variety of frameworks of numerical methods \cite{MJGander_SVandewalle_2007}.
In this paper, we consider Parareal as a two-level multigrid scheme and, thus, as being a method within the same broader family as MGRIT
\cite{RDFalgout_etal_2014}. More precisely, Parareal and MGRIT are reduction-based time-multigrid algorithms for solving time-dependent
problems. A ``reduction-based'' method attempts to reduce the solving of one problem to equivalently solving two smaller problems.
Reduction-based multigrid methods are iterative solvers that consist of two parts: relaxation and coarse-grid correction, which are, in
the spirit of reduction, designed to be complementary in reducing error associated with different degrees of freedom (DoFs). Applying
this idea in the time domain, we combine local time stepping on the discretized temporal domain, the fine grid, for a relaxation scheme,
with time stepping on a coarse temporal mesh that uses a larger time step for the coarse-grid correction. More precisely, consider the
model problem \eqref{eq:problem} on a discrete time grid consisting of $N\in\mathbb{N}$ time intervals with a time-step size $h_t = T/N$.
A coarse temporal grid is derived from this fine grid by considering only every $k$-th temporal point, where $k>1$ is an integer called the
coarsening factor. Thus, the coarse temporal grid consists of $N_c=N/k$ time intervals with time-step size $H_t = kh_t$. Using multigrid
terminology, the time points on the coarse grid define the set of C-points and the remaining temporal points are called F-points, as
visualized in Figure \ref{fig:time_grids}.

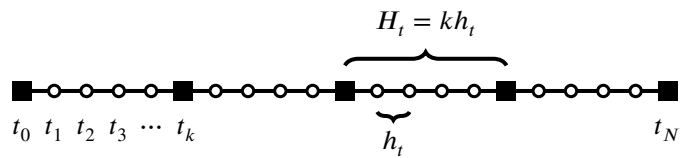
\begin{figure}[h!t]
	\centering
	\begin{tikzpicture}[scale=.85]
		\draw[line width=1.5pt] (0,0) -- (10,0);
		\foreach \i in {0,.5,1,...,10}{
			\fill[color=white] (\i,0) circle (2.5pt);
			\draw[line width=1.15pt] (\i,0) circle (2.5pt);
		}
		\foreach \i in {0,2.5,5,7.5,10}{
			\fill (\i-.15,.15) rectangle (\i+.15,-.15);
		}
		\foreach \i/\n in {0/0,.5/1,1/2,1.5/3}{
			\draw (\i,-.6) node {$t_\n$};
		}
		\draw (2,-.6) node {$\cdots$};
		\draw (2.55,-.6) node {$t_k$};
		\draw (10,-.6) node {$t_N$};
			
		\draw[line width=1.5pt,,decorate,decoration={brace,amplitude=3pt},yshift=5pt] (6,-.5) -- (5.5,-.5) node[below=3pt,midway] {$h_t$};
		\draw[line width=1.5pt,,decorate,decoration={brace,amplitude=5pt},yshift=-3pt] (5,.5) -- (7.5,.5) node[above=7pt,midway] {$H_t = kh_t$};
	\end{tikzpicture}
	\caption{Fine and coarse temporal grids; squares ${\scriptstyle\blacksquare}$ represent C-points and F-points are marked by circles $\circ$.\label{fig:time_grids}}
\end{figure}

Parareal uses F-relaxation, which refers to an exact local solve on F-points, i.\,e., updating the DoFs at F-points by time-stepping the current
solution values from each C-point across a coarse-scale time interval to the following $k-1$ F-points. After F-relaxation, the residual is zero
at F-points and the local (parallel) fine-grid time stepping is coupled with a global (sequential) time stepping on the coarse grid. More precisely,
the residual is evaluated at C-points and restricted by value to the coarse grid. Then, the coarse-grid DoFs are updated by time-stepping using
a coarse-grid time propagator. After the coarse-grid time stepping, a correction is interpolated to the fine grid, using the so-called ideal interpolation
operator, defined by taking the corrected approximate solution values at each C-point and integrating the values forward in time over the following $k-1$ F-points.

MGRIT uses the same restriction and interpolation operators as Parareal, with an additional option for relaxation. MGRIT augments F-relaxation
by using also C-relaxation, which refers to an exact solve on C-points. More precisely, MGRIT typically uses FCF-relaxation, consisting of
F-relaxation, followed by C-relaxation, and again F-relaxation. While this relaxation scheme is necessary to obtain a scalable multilevel
method, it also improves convergence of the two-level scheme and it can be important for convergence on difficult problems.

In this paper, we apply two-level versions of Parareal and MGRIT to the model problem \eqref{eq:problem}. Thus, only the two-level linear
setting is considered; multilevel schemes result from applying the two-level methods recursively and the full approximation storage (FAS)
\cite{ABrandt_1977b} approach can be used to accommodate nonlinear problems. For the fine- and coarse-grid time stepping schemes,
we consider $s$-stage Runge-Kutta methods with the time propagator \eqref{eq:phi},
\[
	\Phi (h_t) := I - h_t\mathbf{b}_0^T\otimes I \left(I + h_t A_0\otimes \mathcal{L}\right)^{-1} (\mathbf{1}_s \otimes \mathcal{L}),
\]
on the fine grid, and (possibly different) $s$-stage Runge-Kutta methods with coarse time propagator
\begin{align}\label{eq:psi}
	\Psi(kh_t) := I - kh_t\mathbf{b}_1^T\otimes I \left(I + k h_t A_1\otimes \mathcal{L}\right)^{-1} (\mathbf{1}_s \otimes \mathcal{L}),
\end{align}
where $A_1$ and $\mathbf{b}_1$ denote the Runge-Kutta matrix and the weight vector of the Runge-Kutta method on the coarse grid, respectively. 
Note that the time-stepping operator $\Psi$ is of a similar form as $\Phi$ and can also be diagonalized by the eigenvectors of $\mathcal{L}$.
If $\xi$ is an eigenvalue of $\mathcal{L}$, the corresponding eigenvalue of $\Psi$ is given by
\begin{align}\label{eq:mu}
	\mu(\xi) = 1 - kh_t\xi\mathbf{b}_1^T(I + kh_t\xi A_1)^{-1}\mathbf{1}.
\end{align}

\subsection{Two-level convergence and some observations}\label{sub:parareal_mgrit_conv}

Convergence of Parareal and two-level MGRIT have been considered in a number of papers\cite{GBal_2005,MJGander_SVandewalle_2007,MJGander_EHairer_2008,SFriedhoff_SMacLachlan_2015a,VADobrev_etal_2017,BSSouthworth_2018a,MJGander_etal_2018}. For the most part,
previous works derived sufficient conditions and approximate upper bounds on convergence. Recently, under certain assumptions,
\textit{necessary} and sufficient conditions for convergence were derived, including tight bounds on two-grid convergence factors. 
These bounds provide the basis for this work. To review, let $\Phi$ denote the fine-grid time propagation operator, $\Psi$ denote
the coarse-grid time-propagation operator, and assume that:
\begin{enumerate}
  \setlength\itemsep{0ex}
\item $\Phi$ and $\Psi$ are linear and independent of time,
\item $\Phi$ and $\Psi$ commute,
\item $\Phi$ and $\Psi$ are (simultaneously) diagonalizable, with eigenvectors $U$. 
\end{enumerate}
The first assumption includes most linear PDEs that are discretized using a method-of-lines approach, among other problems.
The second assumption holds nearly always if the same spatial discretization (or underlying problem being propagated through
time), say $\mathcal{L}$, is used on the fine and coarse grid. If a different coarse-grid spatial discretization is used, this assumption is
less likely to hold. Finally, given assumption two, the third assumption holds if $\Phi$ and $\Psi$ are diagonalizable, which is
typically equivalent to $\mathcal{L}$ being diagonalizable. This holds for most parabolic-type PDEs, and some hyperbolic as
well. 

Under these assumptions, the following theorem gives exact bounds on convergence in the $(UU^*)^{-1}$-norm based
on the relation of eigenmodes of $\Phi$ and $\Psi$. Note, if $\Phi$ and $\Psi$ are unitarily diaognalizable, such as when
$\mathcal{L}$ is symmetric positive definite (SPD) or, more generally, when $\mathcal{L}$ is normal, then the $(UU^*)^{-1}$-norm
is simply the $\ell^2$-norm. 

\begin{theorem}[Tight two-grid bound (Southworth\cite{BSSouthworth_2018a} Theorem 30)]\label{th:diag_tight}
Assume $\Phi$ and $\Psi$ commute and are diagonalizable, with eigenvectors $U$, and eigenvalues $\{\lambda\}$ and
$\{\mu\}$, respectively. Assume we coarsen in time by a factor of $k$, take $N_c$ time steps on the coarse grid, and let
$\mathcal{E}_F$ and $\mathcal{E}_{FCF}$ denote error propagation of two-level MGRIT with F-relaxation and
FCF-relaxation, respectively. Then, for iterations $i>1$,
\begin{align}\label{eq:v_bounds}
\begin{split}
\sup_i \frac{|\mu_i - \lambda_i^k|}{\sqrt{ (1 - |\mu_i|)^2 + \frac{\pi^2|\mu_i|}{N_c^2}}}
	& \leq \|\mathcal{E}_F\|_{(UU^*)^{-1}}
	\leq \sup_i  \frac{|\mu_i - \lambda_i^k|}{\sqrt{(1 - |\mu_i|)^2 + \frac{\pi^2|\mu_i|}{6N_c^2}} }, \\
\sup_i \frac{|\lambda_i^k||\mu_i - \lambda_i^k|}{\sqrt{ (1 - |\mu_i|)^2 + \frac{\pi^2|\mu_i|}{N_c^2}}}
	& \leq \|\mathcal{E}_{FCF}\|_{(UU^*)^{-1}}
	\leq \sup_i  \frac{|\lambda_i^k||\mu_i - \lambda_i^k|}{\sqrt{(1 - |\mu_i|)^2 + \frac{\pi^2|\mu_i|}{6N_c^2}} }.
\end{split}
\end{align}
\end{theorem}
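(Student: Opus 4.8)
The plan is to exploit the simultaneous diagonalization to collapse the matrix-valued claim into a family of scalar norm computations, and then to estimate the spectral norm of a finite lower-triangular Toeplitz matrix on both sides. First I would change basis by the eigenvector matrix $U$: writing $\Phi = U\Lambda U^{-1}$ and $\Psi = UMU^{-1}$ with $\Lambda = \mathrm{diag}(\lambda_i)$ and $M = \mathrm{diag}(\mu_i)$, the block error-propagation operators $\mathcal{E}_F$ and $\mathcal{E}_{FCF}$, which are assembled from $\Phi$, $\Psi$, and the $k$-fold products relating one coarse step to $k$ fine steps, are simultaneously block-diagonalized because $\Phi$ and $\Psi$ commute. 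In the $(UU^*)^{-1}$-norm the spatial and temporal factors decouple, so that $\|\mathcal{E}_F\|_{(UU^*)^{-1}} = \sup_i \|E_F(\lambda_i,\mu_i)\|_2$, where $E_F(\lambda,\mu)$ is the scalar error-propagation matrix acting on the $N_c$ coarse degrees of freedom for a single eigenmode. This step is what legitimizes the outer $\sup_i$ and reduces everything to scalar estimates.

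Next I would write $E_F(\lambda,\mu)$ explicitly. F-relaxation zeroes the fine-grid residual, the coarse solve inverts the bidiagonal operator built from $\mu$, and ideal interpolation reintroduces the $k$-fold fine propagation $\lambda^k$; the net effect over the coarse points is $E_F(\lambda,\mu) = (\mu - \lambda^k)\,T$, where $T$ is the strictly-lower-triangular Toeplitz matrix with $T_{ij} = \mu^{\,i-j-1}$ for $i>j$ and $0$ otherwise, equivalently $T = S(I-\mu S)^{-1}$ with $S$ the nilpotent down-shift on $\mathbb{R}^{N_c}$. For FCF-relaxation the additional C- then F-sweep multiplies the error by one more $k$-fold fine step, giving $E_{FCF}(\lambda,\mu) = \lambda^k(\mu-\lambda^k)\,T$. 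Thus both claimed inequalities reduce to two-sided estimates of the single quantity $\|T\|_2$, with the prefactors $|\mu_i - \lambda_i^k|$ and $|\lambda_i^k|$ pulled outside.

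I would then compute $\|T\|_2$. Since $T = S(I-\mu S)^{-1}$ and $S^*S = I - e_{N_c}e_{N_c}^*$, one has $\|T\|_2^2 = \lambda_{\max}\bigl((B^*B)^{-1} - ww^*\bigr)$ with $B = I - \mu S$ and $w = B^{-*}e_{N_c}$, so the problem becomes controlling $\lambda_{\min}(B^*B)$, where $B^*B = (1+|\mu|^2)I - \mu S - \bar\mu S^* - |\mu|^2 e_{N_c}e_{N_c}^*$ is Hermitian tridiagonal. The associated Toeplitz tridiagonal matrix, dropping the boundary term, has the closed-form eigenvalues $1 + |\mu|^2 - 2|\mu|\cos(j\pi/(N_c+1))$; the smallest, at $j=1$, equals $(1-|\mu|)^2 + 2|\mu|\bigl(1-\cos(\pi/(N_c+1))\bigr)$. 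Since $1-\cos\theta \approx \theta^2/2$, substituting into $\|T\|_2 = 1/\sigma_{\min}$ reproduces the denominator $\sqrt{(1-|\mu|)^2 + \pi^2|\mu|/N_c^2}$ to leading order, confirming that both bounds degenerate to the infinite-Toeplitz value $1/(1-|\mu|)$ as $N_c\to\infty$.

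The hard part is upgrading this asymptotic match to the two genuinely nonasymptotic, two-sided bounds, which is precisely what forces the gap between the constants $\pi^2/N_c^2$ and $\pi^2/(6N_c^2)$. For the lower bound I would test $E$ against the extremal Fourier-like vector, whose mass is interior and hence nearly orthogonal to the boundary term $w w^*$ and the rank-one perturbation $-|\mu|^2 e_{N_c}e_{N_c}^*$, and use $1-\cos\theta \le \theta^2/2$. The upper bound is the more delicate direction: it needs a worst-case \emph{lower} estimate of $\lambda_{\min}(B^*B)$ that simultaneously absorbs the negative-semidefinite boundary perturbation (which moves eigenvalues the unfavorable way) and the truncation error, for which a sharper trigonometric estimate such as $1-\cos\theta \ge \theta^2/2 - \theta^4/24$ combined with a Weyl/interlacing argument yields the conservative factor $1/6$. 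Throughout I would keep the estimates uniform over all admissible $\mu$ with $|\mu|\le 1$, including the limits $|\mu|\to 1$ and $\mu\to 0$, so that the supremum over eigenmodes is well defined and every inequality holds for all $i$ at once.
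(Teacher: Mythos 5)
First, a point of reference: this paper never proves Theorem \ref{th:diag_tight} itself --- it is quoted from Southworth \cite{BSSouthworth_2018a} (Theorem 30) --- so your attempt has to be measured against the proof in that reference. Your architecture does match it: pass to the eigenvector basis so that $\|\mathcal{E}\|_{(UU^*)^{-1}} = \sup_i\|E(\lambda_i,\mu_i)\|_2$, identify the per-mode temporal operator $E_F = (\mu-\lambda^k)\,S(I-\mu S)^{-1}$ (with an extra factor $\lambda^k$, and in fact one more power of the shift, for FCF), and reduce both inequalities to two-sided estimates of $\|S(I-\mu S)^{-1}\|_2$ via the corner-perturbed tridiagonal matrix $B^*B = (1+|\mu|^2)I - \mu S - \bar\mu S^* - |\mu|^2 e_{N_c}e_{N_c}^*$, where $B = I-\mu S$. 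Your lower-bound sketch can even be made rigorous and clean: instead of a vector ``nearly orthogonal'' to the boundary terms, substitute $v = Bu$ so the Rayleigh quotient becomes $(\|u\|^2 - |u_{N_c}|^2)/(u^*B^*Bu)$, and choose $u_j$ proportional to $e^{ij\arg\mu}\sin(j\pi/N_c)$; then $u_{N_c}=0$ annihilates both rank-one boundary corrections exactly, $u$ is an exact Dirichlet eigenvector of the remaining Toeplitz form, and $2(1-\cos(\pi/N_c)) \leq \pi^2/N_c^2$ gives precisely the left-hand inequalities in \eqref{eq:v_bounds}.

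The genuine gap is the upper bound, i.e.\ the constant $\pi^2/(6N_c^2)$, and the tool you propose for it cannot deliver it. You need a \emph{lower} bound on $\lambda_{\min}(B^*B)$, where $B^*B$ is the Toeplitz tridiagonal matrix \emph{minus} the positive semidefinite rank-one term $|\mu|^2e_{N_c}e_{N_c}^*$. Cauchy interlacing for a subtracted rank-one positive semidefinite perturbation only bounds $\lambda_{\min}(B^*B)$ from \emph{above} by $\lambda_{\min}$ of the Toeplitz part; in the downward direction it is vacuous for the bottom eigenvalue (Weyl gives only $\lambda_{\min}(\mathrm{Toeplitz}) - |\mu|^2$, which is useless here). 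And the boundary term genuinely matters: at $\mu = 1$, $S(I-S)^{-1}$ is the strictly lower triangular all-ones matrix, whose norm is $\approx 2N_c/\pi$, strictly larger than the value $\approx N_c/\pi$ that the unperturbed Toeplitz spectrum would predict --- the corner term effectively changes a Dirichlet boundary condition to a Neumann-type one and drops $\lambda_{\min}$ from $\approx \pi^2/N_c^2$ to $\approx \pi^2/(4N_c^2)$. So no refinement of $1-\cos\theta$ estimates applied to the \emph{unperturbed} eigenvalues, combined with interlacing, can prove the right-hand inequalities; the constant $1/6$ (equivalently $\sqrt{6} > 2$ on the norm) is exactly a certified, uniform-in-$\mu$ control of this boundary effect. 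The cited proof obtains it by analyzing the characteristic (secular) equation of the corner-modified tridiagonal matrix and localizing its extreme root uniformly in $|\mu|$; that analysis is the real content of the theorem, and your proposal stops short of it. Two smaller omissions: your FCF reduction misses a power of the shift, and you never address the qualifier ``for iterations $i>1$,'' which in \cite{BSSouthworth_2018a} arises because the tightness argument requires the error to lie in a particular subspace, attained only after the first iteration.
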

The qualifier for iterations $i>1$ is a subtle detail which is discussed in detail in \cite{BSSouthworth_2018a}. In short, the first iteration
may have a slightly larger (but still bounded) convergence factor. 
These bounds can be relaxed for the simpler expressions
\begin{align}\label{eq:v_bounds2}
\begin{split}
\sup_i \frac{|\mu_i - \lambda_i^k|}{1 - |\mu_i| + \mathcal{O}(1/N_c))}
	& < \|\mathcal{E}_F\|_{(UU^*)^{-1}}
	< \sup_i  \frac{|\mu_i - \lambda_i^k|}{1 - |\mu_i| }, \\
\sup_i \frac{|\lambda_i^k||\mu_i - \lambda_i^k|}{1 - |\mu_i| + \mathcal{O}(1/N_c)}
	& < \|\mathcal{E}_{FCF}\|_{(UU^*)^{-1}}
	< \sup_i  \frac{|\lambda_i^k||\mu_i - \lambda_i^k|}{1 - |\mu_i| }.
\end{split}
\end{align}
The only place of significant difference between bounds in \eqref{eq:v_bounds} and \eqref{eq:v_bounds2}
is when $|\mu_i| \to 1$. However, when $|\mu_i| \to 1$, it is also the case that $|\lambda_i|\to 1$, and Corollary
\ref{cor:limits} proves below, when $\Phi$ and $\Psi$ are derived from Runge-Kutta schemes,
\begin{align*}
\lim_{|\mu_i|,|\lambda_i| \to 1} \frac{|\lambda_i^k||\mu_i - \lambda_i^k|}{1 - |\mu_i| } = 0.
\end{align*}
Furthermore, bounds on $\mathcal{E}_F$ and $\mathcal{E}_{FCF}$ are a supremum over $i$, and it
turns out the maximum is not obtained when $|\mu_i|$ is close to one for any of the many Runge-Kutta
schemes tested here (or that we have tested, but not included in this paper).
For this reason, results in this paper are based on the simpler bounds $\varphi_F$ and $\varphi_{FCF}$,
where (Southworth\cite{BSSouthworth_2018a} Theorems 12 and 13)
\begin{align}\label{eq:varphi}
\begin{split}
\|\mathcal{E}_F\|^2 & = \varphi_F^2 - \mathcal{O}(1/N_c^2) < \varphi_F^2 \hspace{9.5ex}\textnormal{where}\hspace{2ex}
\varphi_F := \sup_i  \frac{|\mu_i - \lambda_i^k|}{1 - |\mu_i| }, \\
\|\mathcal{E}_{FCF}\|^2 & = \varphi_{FCF}^2 - \mathcal{O}(1/N_c^2) < \varphi_{FCF}^2 \hspace{5ex}\textnormal{where}\hspace{2ex}
\varphi_{FCF} := \sup_i  \frac{|\lambda_i^k||\mu_i - \lambda_i^k|}{1 - |\mu_i| },
\end{split}
\end{align}
which eliminates the need to to pick a specific $N_c$ for bounds. 
Note, the equality with perturbation $\mathcal{O}(1/N_c^2)$ holds only when $|\mu_1| \not\approx 1$, which,
as mentioned above, is a valid assumption when taking the supremum over eigenvalues for Runge-Kutta schemes. 

The following lemma first introduces a relatively general result on $\Psi - \Phi^k$ as a rational function of the
spatial discretization $\mathcal{L}$ and time step $h_t$, related to the limiting case of $|\mu_i|,|\lambda_i|\to 1$.

\begin{lemma}\label{lem:psi_phi}
Let $\mathcal{L}$ be a nonsingular operator and $\Phi$ be a fine-grid time-propagation scheme, corresponding to a Runge-Kutta
scheme of order $p_1$, applied to $\mathcal{L}$ with time step $h_t$. Assume we coarsen in time by a factor of $k$,
and let $\Psi$ denote the coarse-grid time-propagation operator, corresponding to a Runge-Kutta scheme of order $p_2$
with time step $kh_t$. Then, $\Psi - \Phi^k$ is a rational function of $h_t\mathcal{L}$, of \textit{minimal} polynomial degree
\begin{align*}
\Psi - \Phi^k & = \mathcal{O}\left((h_t\mathcal{L})^{\min\{p_1,p_2\}+1}\right).
\end{align*}
\end{lemma}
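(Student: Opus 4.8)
The plan is to reduce the operator statement to a scalar, per-eigenvalue computation and then exploit the semigroup identity of the exact flow. Since both $\Phi = R_0(-h_t\mathcal{L})$ and $\Psi$ are rational functions of the single operator $h_t\mathcal{L}$, they commute and are simultaneously diagonalizable over the eigenbasis of $\mathcal{L}$; hence $\Psi - \Phi^k$ is again a rational function of $h_t\mathcal{L}$, and asserting it is $\mathcal{O}((h_t\mathcal{L})^m)$ is equivalent to asserting that, for the scalar variable $w := h_t\xi$ (with $\xi$ any eigenvalue of the nonsingular $\mathcal{L}$), the corresponding eigenvalue difference $\mu(w) - \lambda(w)^k$ vanishes to order $w^m$ as $w\to 0$. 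So first I would pass to this scalar setting and work with the formal power series in $w$, noting that nonsingularity of $\mathcal{L}$ makes the notion of ``minimal polynomial degree'' of the expansion well defined.

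The key input is Theorem \ref{th:exp_p}: an order-$p_1$ fine scheme has stability function agreeing with the exact one-step flow to order $p_1$, so $\lambda(w) = e^{-w} + \mathcal{O}(w^{p_1+1})$; likewise the order-$p_2$ coarse scheme, which advances by the coarse step $kh_t$, satisfies $\mu(w) = e^{-kw} + \mathcal{O}(w^{p_2+1})$ (the factor $k$ is absorbed into the constant). The central trick is then to insert the exact coarse propagator $e^{-kw}$, which by the semigroup property equals $(e^{-w})^k$, and split
\begin{align*}
\mu(w) - \lambda(w)^k = \big[\mu(w) - e^{-kw}\big] + \big[(e^{-w})^k - \lambda(w)^k\big].
\end{align*}
The first bracket is $\mathcal{O}(w^{p_2+1})$ immediately. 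For the second I would factor $a^k - b^k = (a-b)\sum_{j=0}^{k-1}a^{k-1-j}b^j$ with $a = e^{-w}$ and $b = \lambda(w)$; since $a - b = \mathcal{O}(w^{p_1+1})$ and the sum is $\mathcal{O}(1)$ (equal to $k$ at $w=0$), the second bracket is $\mathcal{O}(w^{p_1+1})$. Adding the two contributions gives $\mu - \lambda^k = \mathcal{O}(w^{\min\{p_1,p_2\}+1})$, which is the claimed bound, and lifting back through the functional calculus yields the operator statement.

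Finally, to justify that $\min\{p_1,p_2\}+1$ is the \emph{minimal} (sharp) degree rather than merely an upper bound, I would track the leading coefficients. Writing $\lambda(w) = e^{-w} + a\,w^{p_1+1} + \cdots$ and $\mu(w) = e^{-kw} + \beta\,(kw)^{p_2+1} + \cdots$ with $a,\beta \neq 0$ (the schemes having order \emph{exactly} $p_1,p_2$), the split above produces the leading term $\beta k^{p_2+1} w^{p_2+1} - k\,a\,w^{p_1+1}$. When $p_1 \neq p_2$ only one term survives at the lowest degree and is nonzero; when $p_1 = p_2 = p$ the coefficient is $\beta k^{p+1} - k\,a$, which for the common case of identical fine and coarse schemes ($\beta = a$) equals $a\,k\,(k^{p}-1) \neq 0$ for $k>1$. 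The main obstacle, such as it is, is bookkeeping rather than depth: one must ensure the $a^k-b^k$ factorization does not degrade the order in $k$ and must correctly combine the two mismatched orders $p_1,p_2$ into their minimum. The semigroup splitting renders both steps routine, so I expect no genuine difficulty beyond keeping the expansions organized.
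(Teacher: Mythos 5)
Your proposal is correct and follows essentially the same route as the paper's proof: both rest on Theorem \ref{th:exp_p} (the stability function agrees with $e^{-z}$ to the order of the scheme) combined with the semigroup identity $e^{-kh_t\mathcal{L}} = \bigl(e^{-h_t\mathcal{L}}\bigr)^k$, the only difference being that you handle the difference of $k$-th powers via the add-and-subtract of $e^{-kw}$ and the factorization $a^k - b^k = (a-b)\sum_{j=0}^{k-1}a^{k-1-j}b^j$, whereas the paper expands $\bigl(e^{-h_t\mathcal{L}} + \text{error}\bigr)^k$ by the binomial theorem and cancels the $j=0$ term. Your closing paragraph on leading coefficients goes slightly beyond the paper (which proves only the $\mathcal{O}$ bound), and is a nice touch, though note it relies on $\beta k^{p+1} - ka \neq 0$, which is guaranteed only in the identical-scheme case you single out.
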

\begin{proof}
From Theorem \ref{th:exp_p}, the stability function of any Runge-Kutta scheme of order $p$ can be written in the form
$R_0(z) := e^{z} + Cz^{p+1} + \mathcal{O}(z^{p+2})$, with error constant $C$. This result extends to operators $\Phi$
and $\Psi$, where, for any Runge-Kutta scheme of order $p$,
\begin{align*}
\Phi & = e^{-h_t\mathcal{L}} + \mathcal{O}((h_t\mathcal{L})^{p+1}),
\end{align*}
and a similar expression for $\Psi$. Thus, let $\Phi$ and $\Psi$ correspond to arbitrary Runge-Kutta schemes of orders $p_1$ and $p_2$ with time steps
$h_t$ and $kh_t$, respectively. Then,
\begin{align*}
\Phi^k - \Psi & = \left(e^{-h_t\mathcal{L}} + \sum_{\ell=p_1+1}^\infty \alpha_\ell (-h_t \mathcal{L})^\ell \right)^k - 
	\left(e^{-kh_t\mathcal{L}} + \sum_{\ell=p_2+1}^\infty \beta_\ell (-kh_t \mathcal{L})^\ell \right),
\end{align*}
for coefficients $\{\alpha_\ell\}$ and $\{\beta_\ell\}$ describing the error in the approximation of the exponential. Note, these 
coefficients may be rational functions of $h_t\mathcal{L}$, but it must be that
$\alpha_\ell(-h_t\mathcal{L})^\ell = \mathcal{O}((h_t\mathcal{L})^{q})$, for some $q\geq p+1$, and similarly for $\{\beta_\ell\}$. Applying
the binomial theorem to expand the power in $k$ yields
\begin{align}
\Phi^k - \Psi & = \sum_{j=0}^k \binom{k}{j} e^{-(k-j)h_t\mathcal{L}} \left[\sum_{\ell=p_1+1}^\infty \alpha_\ell (-h_t \mathcal{L})^\ell\right]^j -
	\left(e^{-kh_t\mathcal{L}} + \sum_{\ell=p_2+1}^\infty \beta_\ell (-kh_t \mathcal{L})^\ell \right) \\
& = \sum_{j=1}^k \binom{k}{j} e^{-(k-j)h_t\mathcal{L}} \left[\sum_{\ell=p_1+1}^\infty \alpha_\ell (-h_t \mathcal{L})^\ell\right]^j
	- \sum_{\ell=p_2+1}^\infty \beta_\ell (-kh_t \mathcal{L})^\ell  \\
& = \mathcal{O}\left((h_t\mathcal{L})^{\min\{p_1,p_2\}+1}\right).
\end{align}
\end{proof}

Lemma \ref{lem:psi_phi} is a natural extension of the well-known Runge-Kutta result that for a method $\Phi$ of
order $p$, $e^{\mathcal{L}h_t} = \Phi + \mathcal{O}((\mathcal{L}h_t)^{p+1})$. However, it does lead to a nice
Corollary that for arbitrary Runge-Kutta schemes and coarsening factors, $\Psi$ approximates $\Phi^k$ exactly
as $h_t\xi\to 0$ .

\begin{corollary}\label{cor:limits}
Let $\Phi$ and $\Psi$ correspond to arbitrary Runge-Kutta schemes to propagate the same diagonalizable
operator, $\mathcal{L}$, with real-valued eigenvalues, $\xi$. Then,
\begin{align*}
\lim_{h_t\xi\to 0^+} \frac{|\mu(h_t\xi) - \lambda(h_t\xi)^k|}{1 - |\mu(h_t\xi)|}  & = 0, \hspace{5ex}
\lim_{h_t\xi\to 0^+} \frac{|\lambda(h_t\xi)^k||\mu(h_t\xi) - \lambda(h_t\xi)^k|}{1 - |\mu(h_t\xi)|} = 0.
\end{align*} 
Equivalently, these limits apply when $\lim_{h_t\xi\to 0^+} \Longleftrightarrow \lim_{\mu_i,\lambda_i \to 1^-}$.
\end{corollary}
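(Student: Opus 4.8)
The plan is to control numerator and denominator of each ratio separately, using the order estimate of Lemma \ref{lem:psi_phi} for the numerator $|\mu - \lambda^k|$ and the exponential expansion of Theorem \ref{th:exp_p} for the denominator $1 - |\mu|$. Since $\mathcal{L}$ is diagonalizable with real eigenvalues $\xi$, and $\lambda(h_t\xi)$, $\mu(h_t\xi)$ are the scalar stability functions of the fine and coarse schemes (approximating $e^{-h_t\xi}$ and $e^{-kh_t\xi}$, respectively), every quantity in the two limits is a real-valued function of the single real variable $h_t\xi$. I would therefore first pass from the operator statement of Lemma \ref{lem:psi_phi} to its scalar, per-eigenvalue form, $\mu(h_t\xi) - \lambda(h_t\xi)^k = \mathcal{O}\big((h_t\xi)^{m+1}\big)$ with $m := \min\{p_1,p_2\}$, so that $|\mu - \lambda^k| = \mathcal{O}\big((h_t\xi)^{m+1}\big)$ as $h_t\xi\to 0^+$. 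Because any consistent Runge-Kutta scheme has order $p_1,p_2 \geq 1$, we have $m \geq 1$ and hence $m+1 \geq 2$.

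Next I would expand the denominator. By Theorem \ref{th:exp_p} applied with the coarse step $kh_t$, $\mu(h_t\xi) = e^{-kh_t\xi} + \mathcal{O}\big((kh_t\xi)^{p_2+1}\big) = 1 - kh_t\xi + \mathcal{O}\big((h_t\xi)^2\big)$. The key observation is that the linear coefficient equals exactly $-k$, a direct consequence of consistency ($\mathbf{b}_1^T\mathbf{1} = 1$), and is nonzero. Thus, for $h_t\xi > 0$ sufficiently small, $\mu$ is real and lies in $(0,1)$, so $|\mu| = \mu$ and
\[
1 - |\mu(h_t\xi)| = kh_t\xi + \mathcal{O}\big((h_t\xi)^2\big) = \Theta(h_t\xi).
\]

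Combining the two estimates, the first ratio is $\mathcal{O}\big((h_t\xi)^{m+1}\big)/\Theta(h_t\xi) = \mathcal{O}\big((h_t\xi)^{m}\big) \to 0$, since $m \geq 1$. For the second ratio, Theorem \ref{th:exp_p} gives $\lambda \to 1$ as $h_t\xi \to 0^+$, so the extra factor $|\lambda^k| \to 1$ is bounded and the same estimate forces convergence to $0$. The stated equivalence $\lim_{h_t\xi\to 0^+} \Longleftrightarrow \lim_{\mu,\lambda \to 1^-}$ follows because both $\lambda$ and $\mu$ approach $1$ monotonically from below as $h_t\xi \to 0^+$, so the two limiting regimes describe the same one-sided neighborhood.

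The main obstacle is pinning down the exact vanishing order of the denominator: the whole argument collapses if the linear term in $1-|\mu|$ were to cancel, so I must invoke consistency (order $\geq 1$) to guarantee the coefficient of $h_t\xi$ is exactly $-k \neq 0$, and I must separately verify that $\mu$ remains real and positive on a one-sided neighborhood of the origin so that $|\mu| = \mu$ (rather than $-\mu$). Once the denominator is shown to vanish only linearly while the numerator vanishes at least quadratically, the remainder is a routine comparison of vanishing orders.
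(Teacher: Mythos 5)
Your proof is correct, but it takes a genuinely different route from the paper's. The paper proves Corollary \ref{cor:limits} via L'H\^{o}pital's rule: using Lemma \ref{lem:psi_phi} it writes the numerator as $(h_t\xi)^2|\mathcal{R}_{d-2}(h_t\xi)|$ for a rational function $\mathcal{R}_{d-2}$, then spends most of its effort establishing one-sided differentiability of this absolute value near the origin (no singularity at $h_t\xi=0$, finitely many roots hence fixed sign on some $(0,\hat\epsilon)$), shows the numerator's derivative tends to $0$ while the denominator's derivative tends to $k\neq 0$, and concludes by L'H\^{o}pital for the one-sided limit. You instead compare orders of vanishing directly: numerator $=\mathcal{O}\bigl((h_t\xi)^{m+1}\bigr)$ with $m=\min\{p_1,p_2\}\geq 1$ (the scalar, per-eigenvalue form of Lemma \ref{lem:psi_phi}), denominator $1-|\mu| = kh_t\xi + \mathcal{O}\bigl((h_t\xi)^2\bigr)=\Theta(h_t\xi)$, so the ratio is $\mathcal{O}\bigl((h_t\xi)^{m}\bigr)\to 0$. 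Both arguments ultimately rest on the same two ingredients --- the order estimate of Lemma \ref{lem:psi_phi} and the nonvanishing linear coefficient $-k$ of $\mu$, which the paper phrases as $\lim_{h_t\xi\to0^+}\frac{\partial}{\partial(h_t\xi)}\mu(h_t\xi)=-k$ and you derive from consistency ($\mathbf{b}_1^T\mathbf{1}=1$) via \eqref{eq:mu} --- but your execution is more elementary and arguably cleaner: by noting that $\mu$ is real and lies in $(0,1)$ on a one-sided neighborhood of the origin, you strip the absolute value off the denominator and bypass entirely the sign-change and differentiability analysis that occupies the bulk of the paper's proof. What the paper's L'H\^{o}pital formulation buys in exchange is that it works directly with $|\mu-\lambda^k|$ as a single object without needing to track where absolute values can be removed; what your approach buys is a shorter argument whose only analytic prerequisite is that the stability functions, being rational with no pole at the origin, admit honest Taylor expansions there. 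Your treatment of the final equivalence $h_t\xi\to0^+\Longleftrightarrow \mu,\lambda\to1^-$ (local monotonicity forced by the derivative $-k<0$) is also sound and is in fact made more explicit than in the paper.
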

\begin{proof}
\revision{
Note that $\lim_{h_t\xi\to0^+} \mu(h_t\xi) = 1$ and, by continuity of $\mu$ within the region of stability and
because $\mu$ is a rational function, $|\mu(h_t\xi)|$ is
differentiable in some open interval $h_t\xi \in (0,\epsilon)$. Furthermore, from Theorem \ref{th:exp_p} and 
\eqref{eq:mu}, $\lim_{h_t\xi\to0^+} \frac{\partial}{\partial (h_t\xi)} \mu(h_t\xi) = -k \neq 0$.
From Lemma \ref{lem:psi_phi}, $|\mu(h_t\xi) - \lambda(h_t\xi)^k|$ is the absolute value of a rational function in
$h_t\xi$, with minimum polynomial degree $\geq \min\{p_1,p_2\}+1 := {d}$. Then,
\begin{align*}
|\mu(h_t\xi) - \lambda(h_t\xi)^k| = (h_t\xi)^2 |\mathcal{R}_{d-2}(h_t\xi)|,
\end{align*}
where $\mathcal{R}_{d-2}(h_t\xi)$ is a rational function of minimum polynomial degree $d-2 \geq 0$. 
A rational function is differentiable at all points except those with zero denominator. Because
$\lim_{h_t\xi\to0^+} \mu(h_t\xi), \lambda(h_t\xi)^k = 1$, there cannot be a singularity in 
$|\mu(h_t\xi) - \lambda(h_t\xi)^k|$ at $h_t\xi = 0$. Thus, $(h_t\xi)^2 |\mathcal{R}_{d-2}(h_t\xi)|$ is
continuous in some open interval $(-\epsilon,\epsilon)$ and differentiable except at a potential
sign change in $\mathcal{R}_{d-2}(h_t\xi)$. Noting that $\mathcal{R}_{d-2}(h_t\xi)$ has a fixed and
finite number of roots, there exists some open interval $(0,\hat{\epsilon})$ where
$(h_t\xi)^2 |\mathcal{R}_{d-2}(h_t\xi)|$ has fixed sign and is differentiable. 
Without loss of generality, assume $\mathcal{R}_{d-2}(h_t\xi) \geq 0$ for $h_t\xi\in(0,\hat\epsilon)$. Then
$\frac{\partial}{\partial (h_t\xi)} |\mu(h_t\xi) - \lambda(h_t\xi)^k| = 2h_t\xi\mathcal{R}_{d-2}(h_t\xi) +
h_t\xi\frac{\partial}{\partial (h_t\xi)}\mathcal{R}_{d-2}(h_t\xi)$ and $\lim_{h_t\xi\to0^+} 
\frac{\partial}{\partial (h_t\xi)} |\mu(h_t\xi) - \lambda(h_t\xi)^k|  = 0$. Applying L'H\^opital's rule to
the one-sided limit completes the proof.
}
\end{proof}

\begin{remark}[Complex eigenvalues]\label{rem:complex}
\revision{In the complex-valued case, limits and derivatives require a holomorphic function to be well-defined, which we
do not have. A variation on Corollary \ref{cor:limits} holds in the complex eigenvalue case by simply
including the square root and tight $\mathcal{O}(1/N_c^2)$ term in \eqref{eq:v_bounds}, and evaluating explicitly
at $h_t \xi = 0$. However, the lack of a well-defined limit has subtle but important implications, which are
discussed in Section \ref{sec:imag}.}
\end{remark}

Finally, we introduce the following simple proposition regarding large time steps and L-stable coarse-grid integration
schemes.

\begin{proposition}[L-stable schemes]\label{prop:L}
Let $\Psi$ correspond to an L-stable Runge-Kutta scheme and $\Phi$ an A-stable Runge-Kutta scheme. Then,
\begin{align*}
\lim_{h_t\xi\to \infty} \frac{|\mu(h_t\xi) - \lambda(h_t\xi)^k|}{1 - |\mu(h_t\xi)|} \leq 1, \hspace{5ex}
\lim_{h_t\xi\to \infty} \frac{|\lambda(h_t\xi)^k||\mu(h_t\xi) - \lambda(h_t\xi)^k|}{1 - |\mu(h_t\xi)|} \leq \lim_{h_t\xi\to \infty} |\lambda(h_t\xi)^k| \leq 1.
\end{align*} 
Now, suppose $\Psi$ and $\Phi$ correspond to L-stable Runge-Kutta schemes. Then,
\begin{align*}
\lim_{h_t\xi\to \infty} \frac{|\mu(h_t\xi) - \lambda(h_t\xi)^k|}{1 - |\mu(h_t\xi)|} = 
	\lim_{h_t\xi\to \infty} \frac{|\lambda(h_t\xi)^k||\mu(h_t\xi) - \lambda(h_t\xi)^k|}{1 - |\mu(h_t\xi)|} = 0.
\end{align*}
\end{proposition}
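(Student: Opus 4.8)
The plan is to reduce everything to the identification, established in Section~\ref{sub:rk_theory}, of $\lambda$ and $\mu$ with Runge-Kutta stability functions evaluated on the negative real axis. Since $\mathcal{L}$ is SPD its eigenvalues satisfy $\xi>0$, so $\lambda(h_t\xi) = R_0^{\Phi}(-h_t\xi)$ and $\mu(h_t\xi) = R_0^{\Psi}(-kh_t\xi)$, where $R_0^{\Phi}$ and $R_0^{\Psi}$ denote the stability functions of the fine- and coarse-grid schemes. The first thing I would establish is that, as rational functions $P(z)/Q(z)$ evaluated along $z\to -\infty$, both $\lambda(h_t\xi)$ and $\mu(h_t\xi)$ converge to finite limits $\lambda_\infty := \lim_{h_t\xi\to\infty}\lambda(h_t\xi)$ and $\mu_\infty := \lim_{h_t\xi\to\infty}\mu(h_t\xi)$, each equal to a ratio of leading-order coefficients.

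Next I would pin down these limits from the stability hypotheses. A-stability of $\Phi$ gives $|R_0^{\Phi}(z)|\le 1$ throughout $\Real(z)\le 0$; letting $z=-h_t\xi\to-\infty$ and using continuity yields $|\lambda_\infty|\le 1$, hence $|\lambda_\infty^k|\le 1$. L-stability of $\Psi$ gives $\mu_\infty = 0$ directly from $\lim_{|z|\to\infty}R_0^{\Psi}(z)=0$. The crucial consequence is that the denominator satisfies $1-|\mu(h_t\xi)|\to 1\neq 0$, so the quotients in the proposition have no limiting singularity and the algebra of limits applies termwise.

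With these limits in hand the computation is immediate. By continuity of the modulus, the numerator $|\mu-\lambda^k|\to|0-\lambda_\infty^k| = |\lambda_\infty^k|$, so the first ratio tends to $|\lambda_\infty^k|/1\le 1$, and the second tends to $|\lambda_\infty^k|\cdot|\lambda_\infty^k|/1 = |\lambda_\infty^k|^2\le|\lambda_\infty^k| = \lim_{h_t\xi\to\infty}|\lambda^k|\le 1$, which are precisely the two inequalities of the first part. For the second part, L-stability of $\Phi$ additionally forces $\lambda_\infty=0$ by the same argument applied to $R_0^{\Phi}$, so both numerators vanish in the limit and both ratios tend to $0$.

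The argument carries no serious obstacle; the only point requiring care is justifying that the limits $\lambda_\infty$ and $\mu_\infty$ of the rational stability functions exist and are finite. I would handle this by observing that A-/L-stability precludes the numerator degree of $P/Q$ exceeding the denominator degree (otherwise $|R_0|$ would be unbounded on the negative real axis, contradicting A-stability), so each stability function tends to a finite ratio of leading coefficients as $h_t\xi\to\infty$. A minor subtlety worth flagging is that the statement is phrased as a genuine limit rather than a $\limsup$; this is legitimate here precisely because the rational structure guarantees $\lambda^k$ and $\mu$ converge, rather than merely oscillating within the unit disk.
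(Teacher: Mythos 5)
Your proof is correct and takes essentially the same approach as the paper, whose entire proof is the single line that the claims ``follow immediately from the definition of A-stability and L-stability''; your argument is simply the careful expansion of that sentence, sending $\mu(h_t\xi)\to 0$ by L-stability of $\Psi$ so the denominator tends to $1$, and bounding $|\lambda^k|$ (or sending it to $0$) by A- (or L-) stability of $\Phi$. The one detail you supply that the paper leaves implicit---that the rational stability functions actually converge as $h_t\xi\to\infty$ because A-stability forces the numerator degree not to exceed the denominator degree---is a worthwhile piece of rigor, since the proposition's statement as a genuine limit rather than a $\limsup$ depends on it.
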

\begin{proof}
The proofs follow immediately from the definition of A-stability and L-stability (Definition \ref{def:stability}).
\end{proof}

The limit of $h_t\xi \to \infty$ is relevant for large time steps relative to the spatial mesh. Proposition \ref{prop:L}
proves that using an L-stable coarse-grid operator $\Psi$ with an A-stable \textit{or} L-stable fine-grid operator $\Phi$
cannot result in divergence for very large time steps. Moreover, using L-stable integration schemes for both $\Psi$
and $\Phi$ results in a method that is guaranteed to converge rapidly for large $h_t\xi$. Combining with Corollary
\ref{cor:limits}, which guarantees rapid convergence for small $h_t\xi$, suggests that L-stable schemes are particularly
amenable to Parareal and MGRIT, because they are guaranteed to converge rapidly at both limits of small and large
$h_t\xi$. This is confirmed (theoretically and numerically) in Section \ref{sec:irk}, where L-stable schemes typically
provide the best convergence factors.

\begin{remark}
In fact, there may be conceptual intuition for why L-stable coarse-grid operators are beneficial -- from Hairer\cite{EHairer_GWanner_2010},
we know that A-stable schemes (may) dampen stiff components very slowly. However, we do not rely on the
coarse-grid to integrate stiff components. We expect these to be dealt with on the fine grid (because stiff
components require small time steps to be accurately captured), and on the coarse grid, in the spirit of reduction,
we want these modes to be damped quickly.
\end{remark}

\section{Explicit Runge-Kutta}\label{sec:erk}

This section explores explicit Runge-Kutta schemes and the application of Parareal and MGRIT. First, we consider
some theoretical relationships between $\Psi$ and $\Phi^k$, showing that (i) $\Psi$ is an optimal polynomial
approximation to $\Phi^k$ in a Taylor sense, and (ii) in general, $\Psi - \Phi^k$ is invertible
and, thus, $\Psi$ approximates no eigenmodes of $\Phi^k$ exactly. We then consider convergence as a
function of $h_t\xi$ for various two-level schemes. 

As a consequence of Theorem \ref{th:exp_p}, the stability function of an explicit Runge-Kutta scheme of order 
$p=s$ is given by the exponential series truncated at the $z^s$-term\cite{JCButcher_2016},
	\[
		R_0(z) = 1 + z + \tfrac{z^2}{2!} + \ldots + \tfrac{z^s}{s!}.
	\]
This applies for explicit Runge-Kutta schemes of order $p\leq 4$. For $p\geq 5$, no explicit Runge-Kutta
method exists with $s=p$ stages\cite{JCButcher_2016}. The following lemma directly extends this to the matrix-valued case, and
proves that $\Psi$ is an optimal Taylor approximation to $\Phi^k$. 

\begin{lemma}\label{lem:opt}
For explicit Runge-Kutta schemes of less than order 5, or schemes of higher order with a stability function given by a truncated
Taylor series of the exponential, $\Psi$ is an optimal, in a Taylor-sense, approximation to $\Phi^k$. That is, for an $s$-stage method,
$\Phi^k$ is a matrix polynomial, and $\Psi$ is exactly the $s$ lowest-order terms in this polynomial. 
\end{lemma}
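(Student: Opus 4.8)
The plan is to exploit the explicit form of the stability function of these schemes, namely the truncated exponential series, and to show that raising $\Phi$ to the $k$-th power produces a matrix polynomial whose first $s$ terms coincide exactly with the polynomial $\Psi$. First I would record that for an $s$-stage explicit Runge-Kutta method of the type assumed, the stability function is $R_0(z) = \sum_{\ell=0}^{s} z^\ell/\ell!$, so that $\Phi = R_0(-h_t\mathcal{L}) = \sum_{\ell=0}^{s} \frac{(-h_t\mathcal{L})^\ell}{\ell!}$ is a matrix polynomial of degree $s$ in $h_t\mathcal{L}$, and $\Psi = \sum_{\ell=0}^{s} \frac{(-kh_t\mathcal{L})^\ell}{\ell!}$ is the same polynomial evaluated at $kh_t\mathcal{L}$. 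Since both are polynomials in $\mathcal{L}$, they commute and it suffices to verify the claim scalar-wise for the associated eigenvalues $\lambda(z)$ and $\mu(z)$, where $z = h_t\xi$; the matrix statement then follows by diagonalizing $\mathcal{L}$.

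Next I would compare $\Psi$ against $\Phi^k$ through their common relationship to the exponential. By Theorem \ref{th:exp_p}, each method of order $p$ agrees with $e^{-z}$ through order $z^{p}$; for a genuine $s$-stage explicit scheme of order $p=s$ the agreement is exactly through order $z^s$, with the first omitted term being $z^{s+1}/(s+1)!$. I would write $\Phi = e^{-h_t\mathcal{L}} + \mathcal{O}((h_t\mathcal{L})^{s+1})$ and invoke Lemma \ref{lem:psi_phi} with $p_1 = p_2 = s$, which gives $\Psi - \Phi^k = \mathcal{O}((h_t\mathcal{L})^{s+1})$. This already establishes that $\Psi$ and $\Phi^k$ agree to polynomial degree $s$; the remaining task is to identify those low-order terms concretely as the $s+1$ lowest-order terms of the matrix polynomial $\Phi^k$.

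The key observation is that $\Phi^k$ is itself a matrix polynomial of degree $ks$ in $h_t\mathcal{L}$ (a product of $k$ degree-$s$ polynomials), and $\Psi$ is a polynomial of degree $s$. Their difference being $\mathcal{O}((h_t\mathcal{L})^{s+1})$ forces the terms of $\Phi^k$ of degree $0$ through $s$ to match $\Psi$ termwise, while the terms of degree $s+1$ through $ks$ are the genuinely higher-order tail. Thus I would conclude that $\Psi$ is exactly the truncation of $\Phi^k$ to its $s+1$ lowest-order terms (degrees $0$ through $s$), which is precisely the claimed optimal Taylor-sense approximation. To make the matching of low-order terms fully rigorous rather than merely asserting it from the order estimate, I would note that the degree-$\ell$ coefficient of $\Phi^k$ for $\ell \le s$ is determined solely by the expansion of $e^{-kh_t\mathcal{L}}$ to that order (since the correction terms in each factor only contribute at degree $\ge s+1$), and that $\Psi$ reproduces exactly the order-$s$ Taylor polynomial of $e^{-kh_t\mathcal{L}}$.

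I expect the main obstacle to be the bookkeeping in the last step: showing cleanly that the degree-$\le s$ part of the product $\Phi^k = \left(\sum_{\ell=0}^s \frac{(-h_t\mathcal{L})^\ell}{\ell!}\right)^k$ equals the degree-$\le s$ part of $e^{-kh_t\mathcal{L}}$, and hence equals $\Psi$. This follows because truncating each factor at degree $s$ does not alter any product term of degree $\le s$, so $\Phi^k$ and $\left(e^{-h_t\mathcal{L}}\right)^k = e^{-kh_t\mathcal{L}}$ share identical coefficients up to degree $s$; combining this with the fact that $\Psi$ is by construction the order-$s$ truncation of $e^{-kh_t\mathcal{L}}$ closes the argument. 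Everything else is routine, as the structural facts about the stability functions are supplied by Theorem \ref{th:exp_p} and Lemma \ref{lem:psi_phi}.
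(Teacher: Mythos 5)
Your proposal is correct and takes essentially the same approach as the paper's proof: both identify $\Phi$ and $\Psi$ with truncated exponentials and show that the degree-$\leq s$ part of $\Phi^k$ coincides with that of $e^{-kh_t\mathcal{L}}$ (the paper via the binomial-theorem split into truncation plus tail, you via the equivalent observation that truncating each factor does not alter product terms of degree $\leq s$), hence with $\Psi$. Your preliminary shortcut through Lemma \ref{lem:psi_phi} is only a repackaging of that same expansion, so the arguments are essentially identical.
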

\begin{proof}
Recall the matrix exponential can be expressed as a Taylor series, $\e^{-\mathcal{L}} := \sum_{\ell=0}^\infty \frac{(-\mathcal{L})^\ell}{\ell!}$,
and, from above, $\Phi$ corresponding to explicit Runge-Kutta schemes is exactly a truncated matrix exponential for all
schemes of order less than 5. For higher orders, there are methods (but not all) such that this is also the case. 

Thus, let $\Phi$ correspond to a truncated matrix exponential of power $s$, with time step $h_t$,
\begin{align}
\Phi = I - h_t\mathcal{L} + ... \pm \frac{(h_t \mathcal{L})^s}{s!} = \sum_{\ell=0}^s \frac{(-h_t\mathcal{L})^\ell}{\ell!}.\label{eq:phi_series}
\end{align}
Then, consider truncating the matrix exponential with time step $kh_t$ at power $s$, corresponding to $\Psi$.
First, note that breaking the series into two parts and applying the binomial theorem yields
\begin{align}
e^{-kh_t\mathcal{L}} & = \left(e^{-h_t\mathcal{L}}\right)^k \nonumber\\
& = \left( \sum_{\ell=0}^s \frac{(-h_t \mathcal{L})^\ell}{\ell!} + \sum_{\ell=s+1}^\infty \frac{(-h_t \mathcal{L})^\ell}{\ell!} \right)^k \nonumber\\
& = \sum_{j=0}^k \binom{k}{j} \left[\sum_{\ell=0}^s \frac{(-h_t \mathcal{L})^\ell}{\ell!}\right]^{k-j} \left[\sum_{\ell=s+1}^\infty \frac{(-h_t \mathcal{L})^\ell}{\ell!}\right]^j.\label{eq:binom_s}
\end{align}
Notice that all terms of the second summation in \eqref{eq:binom_s} have power $> s$. It follows that when expressing
$e^{-kh_t\mathcal{L}}$ as a Taylor series centered at zero, terms of order $\leq s$ correspond to $j=0$ in \eqref{eq:binom_s},
\begin{align}
\left[\sum_{\ell=0}^s \frac{(-h_t \mathcal{L})^\ell}{\ell!}\right]^k.\label{eq:psi_s}
\end{align}
$\Psi$ is obtained by truncating this series at power $s$. Noticing that \eqref{eq:psi_s} is exactly $\Phi$ \eqref{eq:phi_series} raised
to the power of $k$ completes the proof.
\end{proof}

It is well-known that explicit integration schemes are limited in the parallel-in-time setting by the coarse-grid stability constraint,
often making them not immediately practical. However, it is interesting to see from Lemma \ref{lem:opt} that using the same
explicit discretization on the coarse grid is actually optimal in some sense. The following Corollary develops conditions under
which $\Psi - \Phi^k$ is invertible for explicit schemes, an assumption necessary for some of the results in \cite{BSSouthworth_2018a}
and also one that provides insight into how $\Psi$ approximates $\Phi^k$. In particular, it is unlikely that any eigenmode is
approximated exactly. 

\begin{corollary}\label{cor:invert}
Let $\Phi$ correspond to an explicit Runge-Kutta scheme of less than order 5, or of higher order with a stability function given
by a truncated Taylor series of the exponential, with $s$ stages and time step $h_t$. Let $\Psi$ correspond to the same
Runge-Kutta scheme, with time step $kh_t$. \revision{Then $\Psi - \Phi^k$ is not invertible if and only if
\begin{align}
(s!)^{k-1}\Gamma(s+1, kh_t\xi) - \Gamma(s+1, h_t\xi)^k & = 0, \label{eq:gamma}
\end{align}
for some eigenvalue $\xi$ of $\mathcal{L}$}, and where $\Gamma(n,x)$ denotes the incomplete Gamma function.
\end{corollary}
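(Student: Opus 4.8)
===PLAN===

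The plan is to make the abstract invertibility condition concrete by writing $\Psi - \Phi^k$ explicitly as a polynomial in $h_t\mathcal{L}$ and then diagonalizing. Since $\Phi$ corresponds to a truncated Taylor series, $\Phi = \sum_{\ell=0}^s \frac{(-h_t\mathcal{L})^\ell}{\ell!}$, and $\Psi$ is the analogous truncation with time step $kh_t$, namely $\Psi = \sum_{\ell=0}^s \frac{(-kh_t\mathcal{L})^\ell}{\ell!}$. Because $\mathcal{L}$ is diagonalizable, $\Psi - \Phi^k$ is singular precisely when its corresponding eigenvalue vanishes for some eigenvalue $\xi$ of $\mathcal{L}$; that is, when $\mu(h_t\xi) - \lambda(h_t\xi)^k = 0$, where $\mu$ and $\lambda$ are the scalar stability functions. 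So the whole problem reduces to the scalar identity: show that $\mu(h_t\xi) = \lambda(h_t\xi)^k$ is equivalent to \eqref{eq:gamma}.

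First I would record the two scalar quantities. Writing $z := h_t\xi$, the fine-grid eigenvalue is the truncated exponential $\lambda(z) = \sum_{\ell=0}^s \frac{(-z)^\ell}{\ell!}$, and the coarse-grid eigenvalue is $\mu(z) = \sum_{\ell=0}^s \frac{(-kz)^\ell}{\ell!}$. The key step is to connect these truncated exponential sums to the incomplete Gamma function. The standard identity is that the partial sum of the exponential series can be written via the upper incomplete Gamma function: for the exponential $e^{-x}$, one has $\sum_{\ell=0}^s \frac{(-x)^\ell}{\ell!} = \frac{e^{-x}}{s!}\,\Gamma(s+1, -x)$ up to the appropriate sign/orientation convention, which follows from the closed form $\Gamma(n, x) = (n-1)!\, e^{-x}\sum_{\ell=0}^{n-1} \frac{x^\ell}{\ell!}$. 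I would use this to write $\lambda(z) = \frac{e^{-z}}{s!}\Gamma(s+1, z)$ (matching the paper's convention so the signs come out right) and likewise $\mu(z) = \frac{e^{-kz}}{s!}\Gamma(s+1, kz)$.

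With these substitutions the equation $\mu = \lambda^k$ becomes $\frac{e^{-kz}}{s!}\Gamma(s+1, kz) = \left(\frac{e^{-z}}{s!}\Gamma(s+1, z)\right)^k = \frac{e^{-kz}}{(s!)^k}\Gamma(s+1, z)^k$. The factors of $e^{-kz}$ cancel on both sides, which is the crucial simplification, and multiplying through by $(s!)^k$ gives $(s!)^{k-1}\Gamma(s+1, kz) = \Gamma(s+1, z)^k$, i.e.\ exactly \eqref{eq:gamma} with $z = h_t\xi$. I would then note this chain of equalities is reversible, so $\Psi - \Phi^k$ fails to be invertible if and only if some eigenvalue $\xi$ of $\mathcal{L}$ satisfies \eqref{eq:gamma}.

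The main obstacle I anticipate is purely bookkeeping rather than conceptual: getting the incomplete Gamma function identity aligned with the paper's sign convention. The series $\sum_{\ell=0}^s \frac{(-z)^\ell}{\ell!}$ alternates, whereas the standard closed form $\Gamma(n,x) = (n-1)! e^{-x}\sum_{\ell=0}^{n-1}\frac{x^\ell}{\ell!}$ has nonnegative powers of $x$, so I must verify that evaluating $\Gamma(s+1, z)$ at the (possibly positive) argument $z = h_t\xi > 0$ reproduces $\lambda$ with the correct alternating signs. Once the clean cancellation of $e^{-kz}$ is secured, the rest is immediate.
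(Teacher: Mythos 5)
Your proposal is correct and follows essentially the same route as the paper: diagonalize $\Psi-\Phi^k$ with the eigenvectors of $\mathcal{L}$, reduce non-invertibility to the scalar condition $\mu(h_t\xi)=\lambda(h_t\xi)^k$, rewrite both truncated exponential sums via $\sum_{\ell=0}^s x^\ell/\ell! = e^x\,\Gamma(s+1,x)/s!$, and cancel the common exponential factors. The sign bookkeeping you flag is a real wrinkle, but it originates in the paper itself: under the standard convention the Gamma-function arguments come out as $-h_t\xi$ and $-kh_t\xi$ (which is exactly what the paper's own proof derives in its final display), so the positive-argument form in the statement of \eqref{eq:gamma} appears to be a sign typo, and your intent to verify that step is warranted.
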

\begin{proof}
$\Psi - \Phi^k$ is not invertible if and only if it has a zero eigenvalue, and a zero eigenvalue $\lambda_0$ would correspond to a root
of the characteristic polynomial of $\Psi - \Phi^k$. Noting that eigenvalues of $\Phi$ and $\Psi$ are a function of eigenvalues of
$\mathcal{L}$, let $\{\xi\}$ denote eigenvalues of $\mathcal{L}$. Then, eigenvalues of $\Psi - \Phi^k$ are given by
\begin{align}
p(\xi) := \sum_{\ell=0}^s \frac{(-kh_t\xi)^\ell}{\ell!} -  \left[\sum_{\ell=0}^s \frac{(-h_t \xi)^\ell}{\ell!}\right]^k.\label{eq:p_xi}
\end{align}
Recognize the partial sum 
\begin{align}
\sum_{\ell=0}^s \frac{(-kh_t\xi)^\ell}{\ell!} = \frac{e^{-kh_tv}}{s!}\Gamma(s+1, -kh_t\xi)\label{eq:incomp}
\end{align}
as a scaling of the incomplete Gamma function. \revision{Plugging \eqref{eq:incomp} into \eqref{eq:p_xi} and setting $p(\xi) = 0$,
corresponding to zero eigenvalue of $\Psi - \Phi^k$, a zero eigenvalue exists if and only if}
\begin{align*}
(s!)^{k-1}\Gamma(s+1, -kh_t\xi) - \Gamma(s+1, -h_t\xi)^k & = 0.
\end{align*}
\end{proof}

\begin{remark}
As stated, Corollary \ref{cor:invert} is not immediately intuitive. However, it is straightforward to plug values of $s$ and $k$ into
\eqref{eq:gamma} and find the roots using symbolic software such as Matlab or Mathematica. Recall under the assumption that
$\Phi$ and $\Psi$ are stable, we must have $|\mu(kh_t\xi)|, |\lambda(h_t\xi)| < 1$. No combination of $k\geq 2$ and $1 \leq s \leq 4$
that we have tested has yielded a zero eigenvalue of $\Psi - \Phi^k$ for stable $h_t\xi$. Looking at the structure of roots of $\Psi-\Phi^k$
in the complex plane suggests there may be an analytical proof of this in general, but we have not identified it.
\end{remark}

Runge-Kutta time integration schemes have an extensive underlying mathematical theory, and above we have begun
to make some connections with parallel-in-time solvers. However, although the analysis of explicit Runge-Kutta schemes
can be tractable, effective parallel-in-time algorithms are generally more difficult to develop. Figure \ref{fig:erkerk}
demonstrates why, plotting eigenvalues $\lambda^k$ and $\mu_k$ as a function of $h_t\xi$, and the corresponding
convergence factor of two-level MGRIT with FCF-relaxation for each eigenmode. As expected, convergence (on the
fine-grid problem) is confined to the coarse-grid stability constraint.

\begin{figure}[!h]
  \centering
  \begin{subfigure}[b]{0.33\textwidth}
    \includegraphics[width=\textwidth]{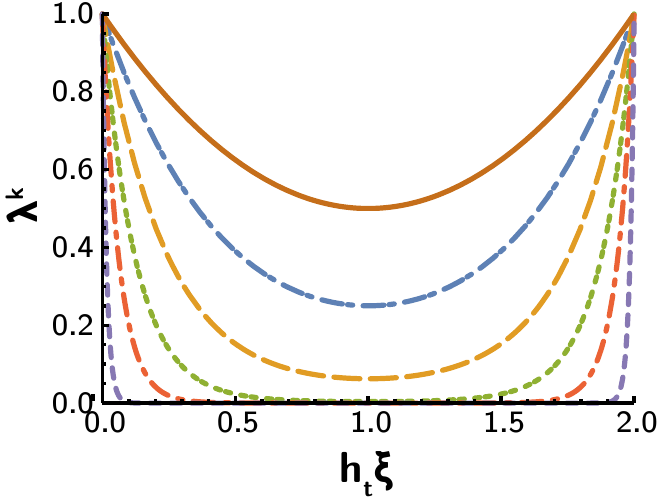}
  \end{subfigure}
   \begin{subfigure}[b]{0.33\textwidth}
    \includegraphics[width=\textwidth]{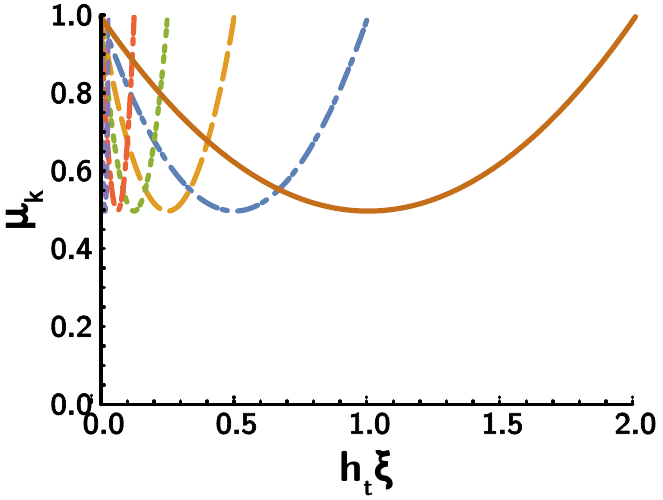}
  \end{subfigure}
  \begin{subfigure}[b]{0.33\textwidth}
    \includegraphics[width=\textwidth]{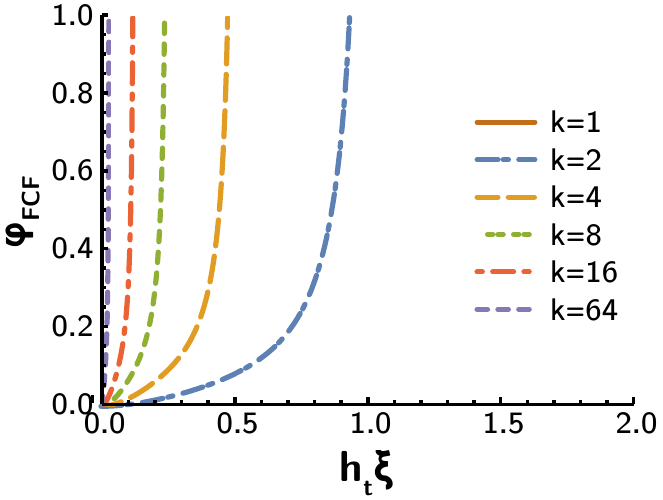}
  \end{subfigure}
\caption{2nd-order explicit Runge Kutta on the coarse and fine grid, for coarsening factors $k\in\{2,4,8,16,64\}$.}
  \label{fig:erkerk}
\end{figure}

An alternative option to achieve Parareal/MGRIT convergence using explicit Runge-Kutta schemes and time steps
approaching the \textit{fine-grid} stability limit is to use an implicit coarse-grid integration scheme\cite{RDFalgout_etal_2014}.
\revision{Figure \ref{fig:erkirk} demonstrates that this is reasonably effective in terms of convergence factor, as long as
a moderate or larger coarsening factor is used, indicating that parallel-in-time methods can be developed that are
convergent for explicit fine-grid schemes. Convergence is best for larger coarsening factors, but near the stability limit
convergence degrades rapidly.} This behavior results from the existence of a non-physical component in the discrete
solution that is oscillatory in time, as discussed in more detail in \cite{RDFalgout_etal_2014}. Since the existence of
this component is not physical, this is not a real restriction for using parallel-in-time methods. \revision{Note that although
it is more difficult to analyze, in practice it is likely best to do implicit coarse-grid solves approximately (for example,
one spatial multigrid V-cycle \cite{RDFalgout_etal_2014}), which makes the cost of implicit coarse-grid solves reasonable
in practice.}

\begin{figure}[!h]
  \centering
  \begin{subfigure}[b]{0.33\textwidth}
    \includegraphics[width=\textwidth]{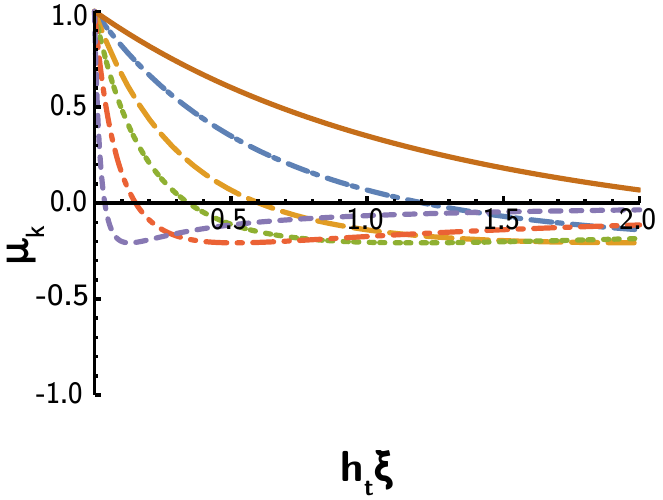}
  \end{subfigure}
   \begin{subfigure}[b]{0.33\textwidth}
    \includegraphics[width=\textwidth]{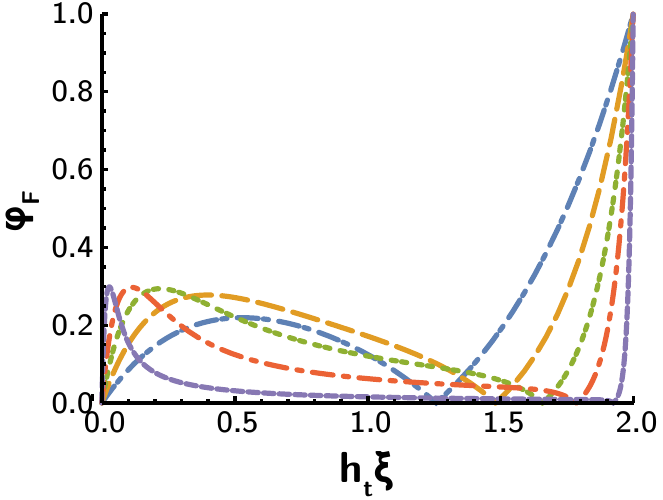}
  \end{subfigure}
  \begin{subfigure}[b]{0.33\textwidth}
    \includegraphics[width=\textwidth]{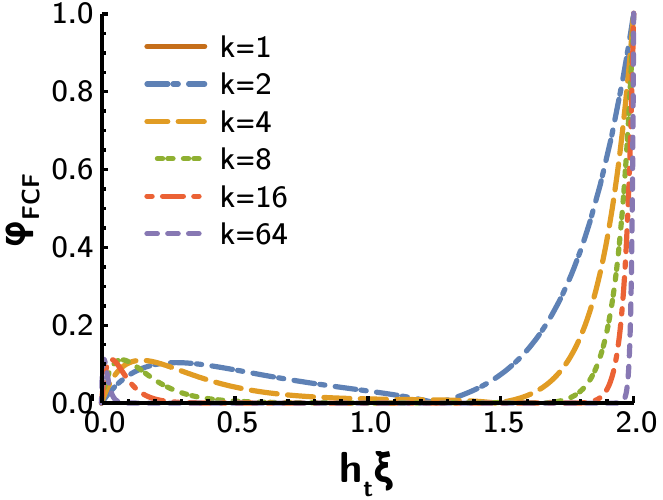}
  \end{subfigure}
\caption{2nd-order explicit Runge Kutta on the fine grid and backward Euler on the coarse grid, for coarsening factors $k\in\{2,4,8,16,64\}$.
Eigenvalues of $\Psi$ are shown on the left as a function of $k$ (to be compared with eigenvalues $\lambda^k$ in Figure \ref{fig:erkerk}),
followed by $\varphi_F$ and $\varphi_{FCF}$ as a function of $h_t\xi$.}
\label{fig:erkirk}
\end{figure}

\section{Implicit Runge-Kutta}\label{sec:irk}

Thus far, we have primarily introduced analysis relating convergence of Parareal and MGRIT to theory of Runge-Kutta methods. This section
gets into more practical information, studying convergence of Parareal/MGRIT applied to implicit integration schemes, a regime where
Parareal and MGRIT are generally more effective than explicit. In particular, for SPD spatial discretizations, we are interested
in achieving ``optimal'' Parareal and two-level MGRIT convergence, with convergence factors independent of time-step size, $h_t$, 
and spatial eigenvalue $\xi \geq 0$ (or, equivalently, independent of spatial mesh size). Here, we limit ourselves to
DIRK-type methods, as these are typically much more practical for PDEs than fully implicit Runge-Kutta methods,
but the methods of analysis used here easily extend to the general case as well.

One important point to draw from results in this section is that understanding the problem one is interested in solving
is critical to choosing an effective Parareal/MGRIT scheme. The first question is simply what range will $h_t\xi$ take? Often
$h_t\xi$ may not get substantially larger than one, in which case ``optimal'' convergence for arbitrarily large $h_t\xi$
may be less of a concern. However, as we will see in Section \ref{sec:irk:L}, even for $\mathcal{O}(h_t\xi) = 1$,
small changes in $h_t\xi$ can quickly move from very rapid convergence to divergence. In other cases, such as
using high-order time integration with a low-order spatial discretization or simply taking very large time steps for
some other reason, the case of $h_t\xi \gg 1$ may be of interest, in which case the optimal convergence is particularly
relevant. It is also worth pointing out that when considering full multilevel MGRIT, the coarse-grids take progressively
larger time steps, and two-grid convergence for $h_t\xi \gg1$ is likely a necessary condition for multilevel convergence,
although a formal connection between two-level and multilevel convergence is not yet understood \cite{mgrit19},
and, as will be seen in Section \ref{sec:ex}, the relationship between two-level and multilevel is not always so simple. 

Tight Parareal and MGRIT convergence bounds are provided for all $h_t\xi>0$ and various implicit Runge-Kutta schemes in
Section \ref{sec:irk:irk}. Sections \ref{sec:irk:L} and \ref{sec:irk:weight} proceed to apply the same analysis tools
to recently developed modified Parareal algorithms, showing the versatility of these bounds. Butcher tableaux for
the SDIRK Runge-Kutta schemes considered here can be found in Appendix \ref{app:sdirk}, Tables \ref{tab:1stage},
\ref{tab:2stage}, and \ref{tab:3stage}, and tableaux for the ESDIRK schemes can be found in Appendix \ref{app:esdirk},
Table \ref{tab:esdirk}.

\subsection{Parareal and MGRIT convergence bounds}\label{sec:irk:irk}

A number of papers have looked at optimal convergence of Parareal for specific implicit time-stepping schemes. The first 
results were from Mathew et al.\cite{TPMathew_etal_2010}, who proved $h$-independent convergence for Parareal with
 two combinations of coarse- and fine-grid integration schemes, independent of $k$. 
Wu\cite{SLWu_2015} expanded on these results, proving optimal convergence for $\Psi$ given by backward Euler, and
$\Phi$ given by either SDIRK22 or TR/BDF2$(\gamma), \gamma\in[0.043,0.977]$, an A-stable (L-stable if $\gamma = 2-\sqrt{2}\cite{Bank_etal_1985})$ scheme combining trapezoid rule (TR) and BDF2 \cite{Bank_etal_1985,Hosea_Shampine_1996}.
Each of these proofs relied on the L-stable nature of the integration schemes, which, as we saw in 
Proposition \ref{prop:L}, offers unique benefits for very large time steps. \revision{Wu\cite{SLWu_2015} also proved that, \textit{using
convergence bounds from} \cite{MJGander_SVandewalle_2007}, convergence cannot be obtained for all $h_t\xi$ when
$\Phi$ corresponds to the trapezoid rule and $\Psi$ backward Euler. However, it should be pointed out that,
because said bounds were not shown to be tight or \textit{necessary} conditions, the proof in \cite{SLWu_2015} does
not rigorously prove that $\Phi$ corresponding to the trapezoid rule and $\Psi$ to backward Euler does not converge
independent of $h_t$.} Finally, Wu and Zhou\cite{SLWu_TZhou_2015} proved $h$-independent convergence assuming
relatively large coarsening factors for three additional fine-grid schemes, Gauss4, a three-stage DIRK method, and TR.

\revision{A comparison of our results with previously derived convergence bounds
\cite{TPMathew_etal_2010,SLWu_2015,SLWu_TZhou_2015} is shown in Table \ref{tab:comp}. For the
comparison, we have computed our bounds for coarsening factors $k\geq 2$ and report the maximum bound over these
coarsening factors. For all L-stable schemes, i.\,e., for backward Euler, TR/BDF2 with $\gamma=2-\sqrt{2}$, and SDIRK22,
results are in (almost) perfect agreement with the literature. For SDIRK23 on the fine grid and backward Euler on the coarse grid,
our analysis results in a maximum convergence factor for even coarsening factors $k\geq 4$ that is slightly better
than the bound reported by Wu and Zhou. Moreover, our bound also applies to odd coarsening factors, with a maximum
convergence factor of 0.392 for all $k\geq 3$. When using Gauss4 on the fine grid, our bound is also slightly tighter
than results in \cite{SLWu_TZhou_2015}, and $\hat{z}_{max}(k)$ can be computed for each coarsening factor $k$.
Finally, for linear problems, Crank-Nicolson integration is equivalent to the implicit midpoint and two-stage RK trapezoidal
methods. In \cite{TPMathew_etal_2010}, a bound of $\overline{\varphi}_F<0.503$ is claimed (but the derivation is not
shown). Table \ref{tab:comp} shows that our bounds disagree with that result. A full analytic derivation for the case
of $k=2$ is given in Lemma \ref{lem:crank} in the Appendix to show that Crank-Nicolson can observe arbitrarily slow
convergence for large $h_t\xi$, suggesting there is an error in \cite{TPMathew_etal_2010}. This is consistent with
statements in \cite{SLWu_2015} stating that the trapezoidal method can converge arbitrarily slowly, although, as
pointed out previously, motivation for such in \cite{SLWu_2015} was not rigorous due to a lack of \textit{necessary}
conditions for convergence. 
\begin{table}[tp]
\renewcommand{\tabcolsep}{0.195cm}
\centering
\begin{tabular}{ |c | c | c | c | c | c | c | }\Xhline{1.25pt}
$\Phi\sim~$ & BWE & CN & TR/BDF2 & SDIRK22 & SDIRK23 & Gauss4 \\ \hline\hline\multirow{2}{*}{$\overline{\varphi}_F$} & \multirow{2}{*}{0.298\cite{TPMathew_etal_2010}} & \multirow{2}{*}{0.503\cite{TPMathew_etal_2010}} &
	\multirow{2}{*}{1/3\cite{SLWu_2015}} & \multirow{2}{*}{0.316\cite{SLWu_2015}} & 1/3\cite{SLWu_TZhou_2015}  & 1/3 ($h_t\xi < z_\text{max}$; \\
& & &
	& & (if $k\geq 4$, even) & $k$ even, large)\cite{SLWu_TZhou_2015} \\ \hline
\multirow{2}{*}{$\varphi_F$} & \multirow{2}{*}{0.298} & \multirow{2}{*}{1} & \multirow{2}{*}{0.316} &  \multirow{2}{*}{0.316} & 0.301 for $k\geq 4$, &  \multirow{2}{*}{0.298 ($h_t\xi < \hat{z}_\text{max}$)} \\
&&&& &0.392 for $k\geq3$ & \\
\Xhline{1.25pt}
\end{tabular} 
\caption{\revision{Convergence bounds $\overline{\varphi}_F$ (reported in \cite{TPMathew_etal_2010,SLWu_2015,SLWu_TZhou_2015}) and maximum convergence factors $\varphi_F$ (where the maximum is taken over all coarsening factors $k\geq 2$) for Parareal/two-level MGRIT with F-relaxation, various fine-grid time-stepping schemes, and $\Psi\sim$ backward Euler. Here, we consider TR/BDF2 with the free parameter
$\gamma=2-\sqrt{2}$, which yields a number of desirable properties \cite{SLWu_2015,Hosea_Shampine_1996} and can be written as
an ESDIRK method. For Gauss4, the true $\hat{z}_\text{max}$ is a function of coarsening factor $k$ -- convergence is bounded $<0.298$ until it begins to asymptote to 1 as $h_t\xi \to \infty$; the value at which this behavior begins and convergence is larger than $0.298$ depends on $k$. }
}
\label{tab:comp}
\end{table}
}

The analysis section for most of those papers was fairly involved. Here, we want to emphasize that results from
\cite{BSSouthworth_2018a} (see Theorem \ref{th:diag_tight}) can easily be used to test such theory for arbitrary
Runge-Kutta schemes on the coarse and fine grid. Table \ref{tab:imp} gives a sample of results for nine different
schemes, using the same integrator on coarse and fine grids, and coarsening factors 2--64. Results include the
maximum convergence factor for F- or FCF-relaxation, the value $h_t\xi$ at which this maximum is obtained, or
the value $h_t\xi$ such that convergence is observed for all lesser values. It is worth pointing out that the algebra
behind deriving bounds that hold for all $k$ as well is still not trivial, but computing bounds for any specific value of $k$
is straightforward. 

\begin{table}[tp]
\renewcommand{\tabcolsep}{0.195cm}
\centering
\begin{tabularx}{0.9\textwidth}{ |c| *{6}{Y|} }\Xhline{1.25pt}
Type & Stages & Order & Stage-order & Stability & Stiff acc.  \\ \hline\hline\hline
SDIRK & 1 & 1 & 1 & L & Yes \\\hline
BWE & \multicolumn{5}{c |}{
\begin{tabular}[t]{@{}c | cccccc@{}}
	$k$ & 2 & 4 & 8 & 16 & 32 & 64 \\ \hline
	$\max \varphi(h_t\xi)$ & 0.13/0.05 & 0.20/0.08 & 0.25/0.10 & 0.27/0.10 & 0.28/0.11 & 0.29/0.11\\
	arg$\max \varphi(h_t\xi)$ & 1/0.33 & 0.48/0.16 & 0.23/0.08 & 0.11/0.04 & 0.06/0.02 & 0.03/0.01
\end{tabular}} \\\hline\hline
SDIRK & 1 & 2 & 2 & A & Yes \\\hline
SDIRK-12 & \multicolumn{5}{c |}{
\begin{tabular}[t]{@{}c | cccccc@{}}
	$k$ & 2 & 4 & 8 & 16 & 32 & 64 \\ \hline
	$\max \varphi(h_t\xi)$ & $\infty$/$\infty$ & $\infty$/$\infty$ & $\infty$/$\infty$ & $\infty$/$\infty$ & $\infty$/$\infty$ & $\infty$/$\infty$  \\
	$h_t\xi$ $|$ $\varphi < 1$ & 2.87/6.35 & 1.50/7.75 & 0.75/10.5 & 0.37/15.5 & 0.18/24.3 & 0.09/39.7
\end{tabular}} \\\hline\hline
ESDIRK & 2$^*$ & 2 & 2 & A & No \\\hline
TR & \multicolumn{5}{c |}{
\begin{tabular}[t]{@{}c | cccccc@{}}
	$k$ & 2 & 4 & 8 & 16 & 32 & 64 \\ \hline
	$\max \varphi(h_t\xi)$ & $\infty$/$\infty$ & $\infty$/$\infty$ & $\infty$/$\infty$ & $\infty$/$\infty$ & $\infty$/$\infty$ & $\infty$/$\infty$  \\
	$h_t\xi$ $|$ $\varphi < 1$ & 2.87/6.36 & 1.50/7.76 & 0.75/10.5 & 0.37/15.5 & 0.19/24.3 & 0.09/39.7 
\end{tabular}} \\\hline\hline
SDIRK & 2 & 2 & 1 & L & Yes  \\\hline
SDIRK-22 & \multicolumn{5}{c |}{
\begin{tabular}[t]{@{}c | cccccc@{}}
	$k$ & 2 & 4 & 8 & 16 & 32 & 64 \\ \hline
	$\max \varphi(h_t\xi)$ & 0.29/0.008 & 0.26/0.01 & 0.26/0.01 & 0.26/0.01 & 0.26/0.01 & 0.26/0.01 \\
	arg$\max \varphi(h_t\xi)$ & 5.0/0.70 & 2.1/0.36 & 1.0/0.17 & 0.51/0.10 & 0.25/0.05 & 0.13/0.02
\end{tabular}} \\\hline\hline
SDIRK & 2 & 3 & 1 & A & No  \\\hline		SDIRK-23 & \multicolumn{5}{c |}{
\begin{tabular}[t]{@{}c | cccccc@{}}
	$k$ & 2 & 4 & 8 & 16 & 32 & 64 \\ \hline
	$\max \varphi(h_t\xi)$ &  ($>$1)/($>$1) &  ($>$1)/($>$1) &  ($>$1)/0.25 &  ($>$1)/0.02 &  ($>$1)/0.013 &  ($>$1)/0.013 \\
	arg$\max \varphi(h_t\xi)$ & $\infty$/$\infty$ & $\infty$/$\infty$ & $\infty$/$\infty$ & $\infty$/$\infty$ & $\infty$/0.05 & $\infty$/0.025 \\
	$h_t\xi$ $|$ $\varphi < 1$ &4.43/17.6 & 2.61/257 & 1.31/NA & 0.65/NA & 0.33/NA & 0.16/NA
\end{tabular}} \\\hline\hline
ESDIRK & 3$^*$ & 2 & 2 & L & Yes  \\\hline		ESDIRK-32 & \multicolumn{5}{c |}{
\begin{tabular}[t]{@{}c | cccccc@{}}
	$k$ & 2 & 4 & 8 & 16 & 32 & 64 \\ \hline
	$\max \varphi(h_t\xi)$ & 0.29/0.008 & 0.26/0.01 & 0.26/0.011 & 0.26/0.011 & 0.26/0.011 & 0.26/0.011 \\
	arg$\max \varphi(h_t\xi)$ & 5.01/0.70 & 2.06/0.36 & 1.02/0.18 & 0.51/0.089 & 0.26/0.045 & 0.13/0.022
\end{tabular}} \\\hline\hline
ESDIRK & 3$^*$ & 3 & 2 & A & Yes  \\\hline		ESDIRK-33 & \multicolumn{5}{c |}{
\begin{tabular}[t]{@{}c | cccccc@{}}
	$k$ & 2 & 4 & 8 & 16 & 32 & 64 \\ \hline
	$\max \varphi(h_t\xi)$ & ($>$1)/($>$1) &  ($>$1)/ ($>$1) &  ($>$1)/0.25 &  ($>$1)/0.019 &  ($>$1)/0.013 &  ($>$1)/0.013 \\
	arg$\max \varphi(h_t\xi)$ & 	$\infty$/$\infty$ & $\infty$/$\infty$ & $\infty$/$\infty$ & $\infty$/$\infty$ & $\infty$/0.05 & $\infty$/0.026 \\
	$h_t\xi$ $|$ $\varphi < 1$ & 4.43/17.6 & 2.6/257 & 1.31/NA & 0.65/NA & 0.33/NA & 0.16/NA	
\end{tabular}} \\\hline\hline
SDIRK & 3 & 3 & 1 & L & Yes  \\\hline		SDIRK-33 & \multicolumn{5}{c |}{
\begin{tabular}[t]{@{}c | c@{\hspace{\tabcolsep}} c@{\hspace{\tabcolsep}} c@{\hspace{\tabcolsep}} c@{\hspace{\tabcolsep}} c@{\hspace{\tabcolsep}} c@{}}
	$k$ & 2 & 4 & 8 & 16 & 32 & 64 \\ \hline
	$\max \varphi(h_t\xi)$ & 0.16/0.004 & 0.15/0.005 & 0.15/0.005 & 0.15/0.005 & 0.15/0.005 & 0.15/0.005 \\
	arg$\max \varphi(h_t\xi)$ & 4.84/0.85 & 2.07/0.43 & 1.03/0.22 & 0.51/0.11 & 0.26/0.05 & 0.13/0.027
\end{tabular}} \\\hline\hline
SDIRK & 3 & 4 & 1 & A & Yes  \\\hline		SDIRK-34 & \multicolumn{5}{c |}{
\begin{tabular}[t]{@{}c | c@{\hspace{\tabcolsep}} c@{\hspace{\tabcolsep}} c@{\hspace{\tabcolsep}} c@{\hspace{\tabcolsep}} c@{\hspace{\tabcolsep}} c@{}}
	$k$ & 2 & 4 & 8 & 16 & 32 & 64 \\ \hline
	$\max \varphi(h_t\xi)$ &  ($>$1)/0.75 &  ($>$1)/0.19 &  ($>$1)/0.019 &  ($>$1)/0.007 &  ($>$1)/0.007 &  ($>$1)/0.007 \\
	arg$\max \varphi(h_t\xi)$ & $\infty$/$\infty$ & $\infty$/$\infty$ & $\infty$/$\infty$ & $\infty$/0.13 & $\infty$/0.066 & $\infty$/0.033 \\
	$h_t\xi$ $|$ $\varphi < 1$ & 7.55/NA & 6.21/NA & 3.23/NA & 1.62/NA & 0.81/NA & 0.40/NA
\end{tabular}} \\\hline\hline

\Xhline{1.25pt}
\end{tabularx} 
\caption{Bounds $\varphi_F$ and $\varphi_{FCF}$ for various SDIRK and ESDIRK schemes up to three stages, as a function of $h_t\xi$, for spatial
eigenvalue $\xi$ and time step $h_t$. Here, ``$\max \varphi(h_t\xi)$'' corresponds to the maximum value obtained for $h_t\xi \in(0,\infty)$,
``arg$\max \varphi(h_t\xi)$'' provides the value of $h_t\xi$ at which this maximum is obtained, and ``$h_t\xi$ $|$ $\varphi < 1$'' gives $\hat{x}$ such
that $\varphi < 1$ (that is, convergent) for all $h_t\xi < \hat{x}$. The sign $(>1)$ indicates that $\varphi > 1$ for some $h_t \xi$, but $\varphi < \infty$
for all $h_t \xi$, ``NA'' means not applicable, and a $*$ superscript indicates one explicit stage, which does not require a linear solve.
}
\label{tab:imp}
\end{table}

When choosing a (serial) time integration scheme, properties of the method often considered include order, stage order,
stability, stiffly accurate, and cost (in terms of number of linear solves). For parallel-in-time, convergence is an obvious concern,
but the properties of schemes that yield fast Parareal/MGRIT convergence are not always consistent with the properties desired
in an integration scheme. Next we consider Parareal and MGRIT with a desirable implicit scheme on the fine grid, the A-stable ESDIRK-33
scheme, and consider the effects of coarse-grid and coarsening factor on convergence. True DIRK methods (with nonzero
diagonals in Butcher tableaux) can have stage-order at most one\cite{JCButcher_2016}. Fully implicit SDIRK (necessary
for stage-order of three or more) are often not practical, because the linear systems are too complex to solve. EDIRK and
ESDIRK methods though, consisting of one explicit stage, followed by implicit stages, can have stage-order two with
minimal additional overhead cost (one explicit stage update)\cite{JCButcher_2016}. ESDIRK-33 is a 3rd-order, stiffly
accurate, A-stable, Runge-Kutta scheme, with stage-order two. Figure \ref{fig:esdirk} considers Parareal and two-level MGRIT using
ESDIRK-33 on the fine grid, and backward Euler, 2nd-order L-stable ESDIRK-32, and A-stable ESDIRK-33 on the
coarse grid.

\begin{figure}[!h]
  \centering
  \begin{subfigure}[b]{0.33\textwidth}
    \includegraphics[width=\textwidth]{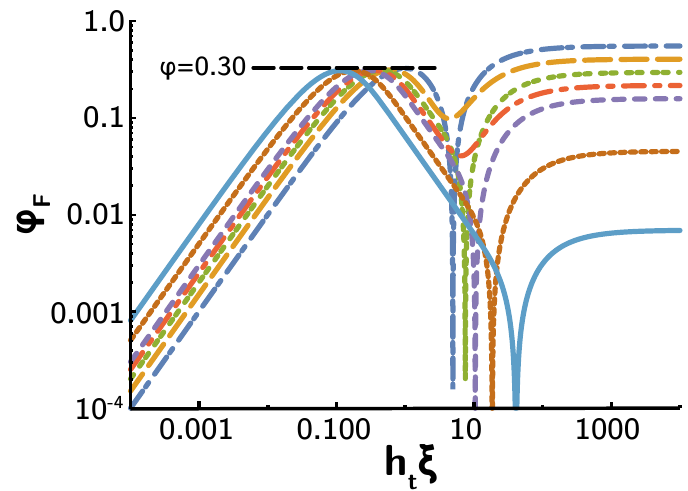}
  \end{subfigure}
   \begin{subfigure}[b]{0.33\textwidth}
    \includegraphics[width=\textwidth]{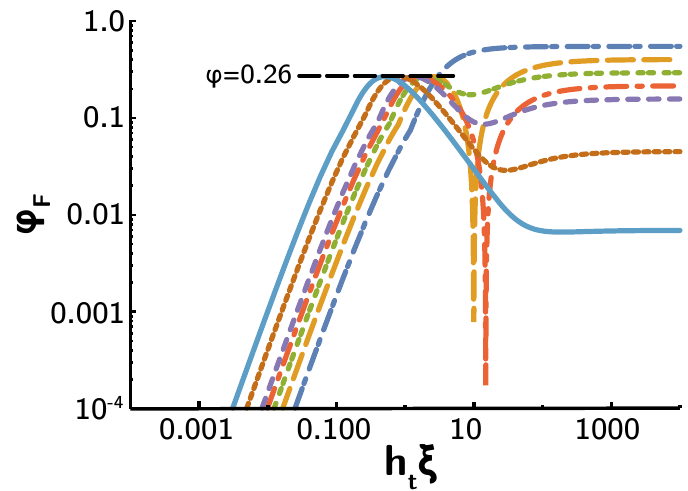}
  \end{subfigure}
  \begin{subfigure}[b]{0.33\textwidth}
    \includegraphics[width=\textwidth]{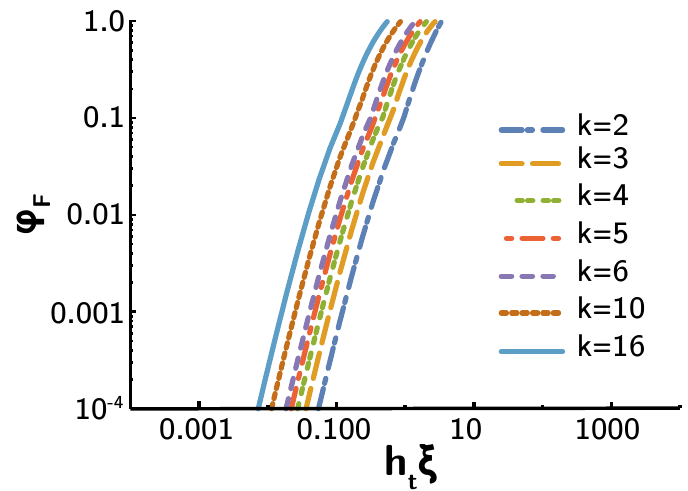}
  \end{subfigure}
\\
  \begin{subfigure}[b]{0.33\textwidth}
    \includegraphics[width=\textwidth]{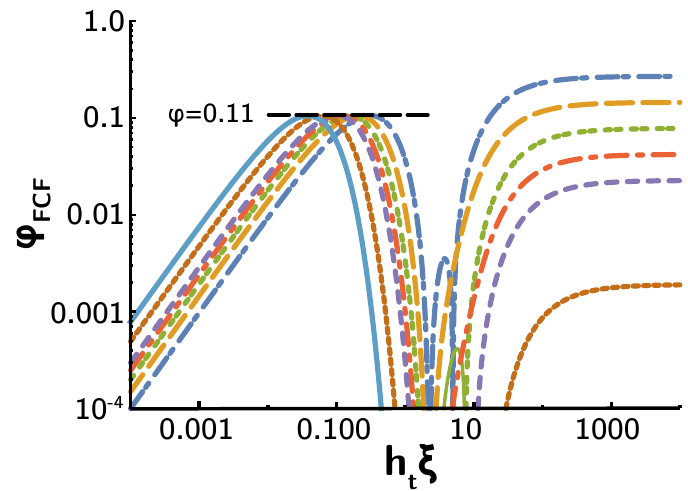}
    \caption{$\Psi \sim$ backward Euler}
    \label{fig:esdirk:be}
  \end{subfigure}
   \begin{subfigure}[b]{0.33\textwidth}
    \includegraphics[width=\textwidth]{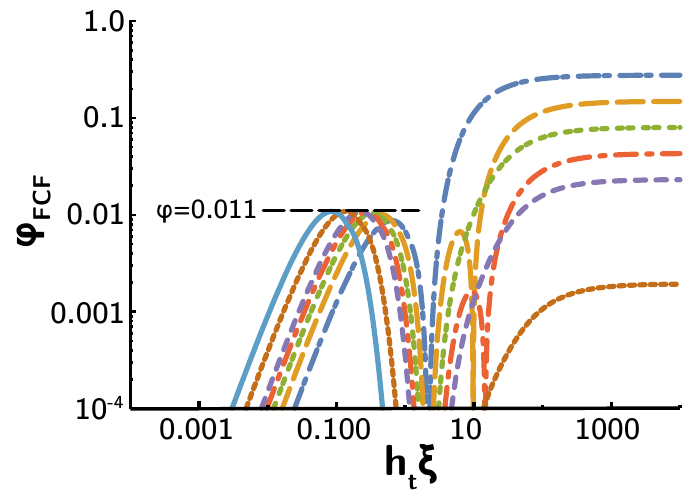}
    \caption{$\Psi \sim$ L-stable ESDIRK-32}
    \label{fig:esdirk:23}
  \end{subfigure}
  \begin{subfigure}[b]{0.33\textwidth}
    \includegraphics[width=\textwidth]{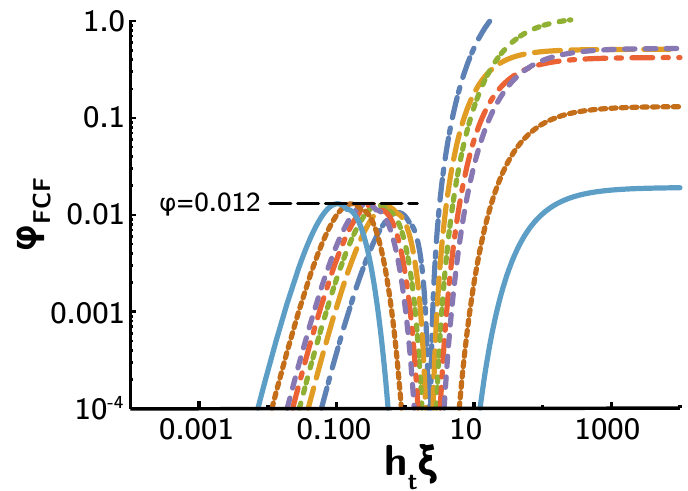}
    \caption{$\Psi \sim$ A-stable ESDIRK-33}
    \label{fig:esdirk:33}
  \end{subfigure}
\caption{Convergence bounds for Parareal and two-level MGRIT with $\Phi\sim$ A-stable ESDIRK-33, three different coarse-grid time stepping schemes,
and coarsening factors $k\in\{2,3,4,5,6,10,16\}$.}
  \label{fig:esdirk}
\end{figure}

The results in Figure \ref{fig:esdirk} show a number of interesting aspects on Parareal and MGRIT convergence:
\begin{enumerate}
\item \underline{Small changes in $h_t$ can dramatically affect convergence:} In the FCF row of Figures 
\ref{fig:esdirk:23} and \ref{fig:esdirk:33}, somewhere between $h_t\xi \in[5,12]$ (depending on $k$), a small
$\mathcal{O}(1)$ change in $h_t\xi$ can increase the worst-case convergence factor from $\varphi=0.01$ to 
as much as $15\times$ larger (again, depending on $k$). 

\item\revision{\underline{Size of $k$ can matter:} It is known that convergence of MGRIT and Parareal
generally improves with larger coarsening factors, because larger segments of the time domain are solved
exactly each iteration. However, Figure \ref{fig:esdirk:33} shows how sensitive this can be, where a modest
increase from coarsening by a factor of 2--4 to a factor of 16 changes from divergent iterations for moderate
to large $h_t\xi$ to a maximum convergence factor $\rho = 0.012$ over all $h_t\xi$.}

\item \underline{L-stability matters:} With F-relaxation and A-stable ESDIRK-33 on the fine grid, using an
L-stable coarse-grid operator is fundamental for convergence with moderate to large $h_t\xi$. L-stable schemes
(Figures \ref{fig:esdirk:be} and \ref{fig:esdirk:23}, as well as multiple other tested L-stable schemes) offer more
robust convergence than A-stable schemes (Figure \ref{fig:esdirk:33}, and multiple other tested A-stable schemes). 

\item \underline{FCF-relaxation is (sometimes) important:} With backward Euler as a coarse-grid operator
(Figure \ref{fig:esdirk:be}), FCF-relaxation is not particularly helpful, giving $\approx 1.8\times$ speedup in convergence
for roughly twice the work. Contrastingly, using L-stable ESDIRK-23 on the coarse-grid, FCF-relaxation can
yield a $3.4\times$ speedup in convergence (depending on $k$), while for A-stable ESDIRK-33 on the
coarse grid, FCF-relaxation can mean the difference between divergence and convergence. FCF-relaxation
is discussed in more detail in Section \ref{sec:irk:weight}.

\item \underline{Choice of coarse-grid integrator matters:} For twice as many linear solves on the coarse
grid ($\ll2\times$ the total work, depending on $k$), using ESDIRK-32 and ESDIRK-33 on the coarse grid can
result in convergence twice as fast with FCF-relaxation. Conversely, with just F-relaxation, the extra linear
solves associated with ESDIRK-32 and ESDIRK-33 result in either minor improvement in convergence
over backward Euler or divergence.

\item \underline{Parity of $k$ can matter:} Notice in the FCF row of Figure \ref{fig:esdirk:33} that for large $h_t\xi$, 
coarsening factors of $k=2$ and $k=4$ both diverge, while coarsening factors $k=3$ and $k=5$ converge. This is
due to the sign of $\lambda^k$. 

\end{enumerate}

These are just five interesting points on Parareal and MGRIT convergence that arise from this specific example.
The larger lesson is that convergence of Parareal/MGRIT can be complex, even in the case of unitarily
diagonalizable operators. However, simple a priori analysis can motivate algorithmic choices that
make the difference between convergence and divergence, or yield speedups of several times. 

\subsection{L-stability and coarse-/fine-grid propagators}\label{sec:irk:L}

Table \ref{tab:imp} shows that not all Runge-Kutta schemes are equal from the perspective of parallel-in-time. Indeed, some
schemes offer significantly better convergence for the same, for example, number of stages. This was observed in
\cite{SLWu_2016}, where it is noted that the implicit trapezoid method (that is, 2nd-order A-stable ESDIRK) and 4th-order
Gauss Runge-Kutta do not have convergence bounded independent of $h_t$ and $h_x$ and, in fact, can observe arbitrarily
slow convergence (consistent with Proposition \ref{prop:L}). 

This section starts by analyzing the implicit trapezoid method closer. Figure \ref{fig:trap} shows the worst-case convergence
of Parareal and MGRIT using F- and FCF-relaxation as a function of $h_t\xi$, where the implicit trapezoid rule is used on the
fine grid, and either the implicit trapezoid (Figure \ref{fig:trap:it}) or backward Euler (Figure \ref{fig:trap:be}) is used on the
coarse grid. First, note that using backward Euler as a coarse-grid propagation scheme is a perfectly reasonable method
with good convergence if $h_t\xi < \mathcal{O}(10)$. The arbitrarily slow convergence observed in \cite{SLWu_2016} occurs
for $h_t\xi > \mathcal{O}(10)$, but, in many practical applications, this regime is not of interest. Using the implicit
trapezoid method on the coarse grid with just F-relaxation tightens this bound, where divergence is observed for
$h_t\xi > \mathcal{O}(0.1)-\mathcal{O}(1)$ (depending on $k$). Indeed, this is a tighter constraint that is not often satisfied in practice.
However, using the implicit trapezoid method on the coarse grid with FCF-relaxation yields significantly faster convergence
than backward Euler if $h_t\xi< \mathcal{O}(10)$ ($\approx3.5\times$ faster convergence than backward Euler with F-relaxation
and $\approx1.8\times$ faster than backward Euler with FCF-relaxation), for a comparable cost to backward Euler with
FCF-relaxation. 

\begin{figure}[!ht]
  \centering
  \begin{subfigure}[b]{0.33\textwidth}
    \includegraphics[width=\textwidth]{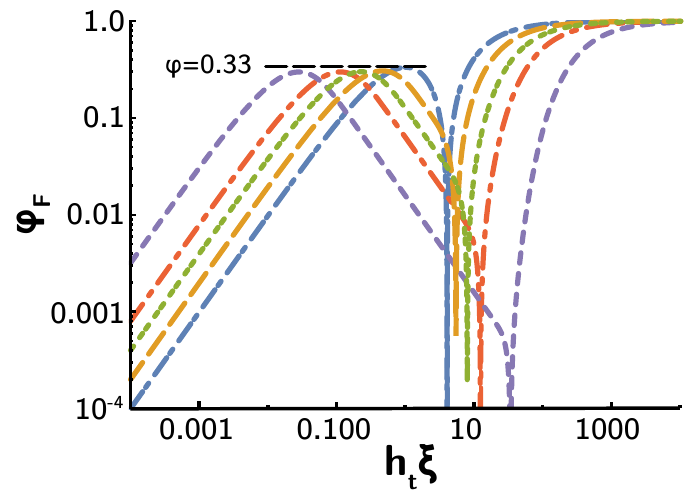}
  \end{subfigure}
  \begin{subfigure}[b]{0.33\textwidth}
    \includegraphics[width=\textwidth]{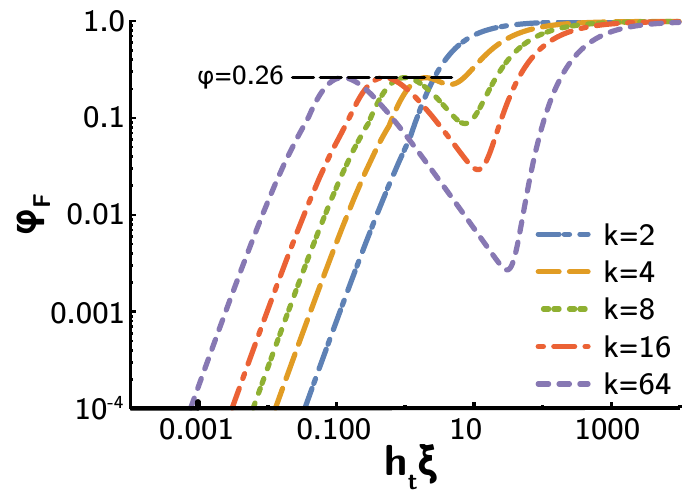}
  \end{subfigure}
  \begin{subfigure}[b]{0.33\textwidth}
    \includegraphics[width=\textwidth]{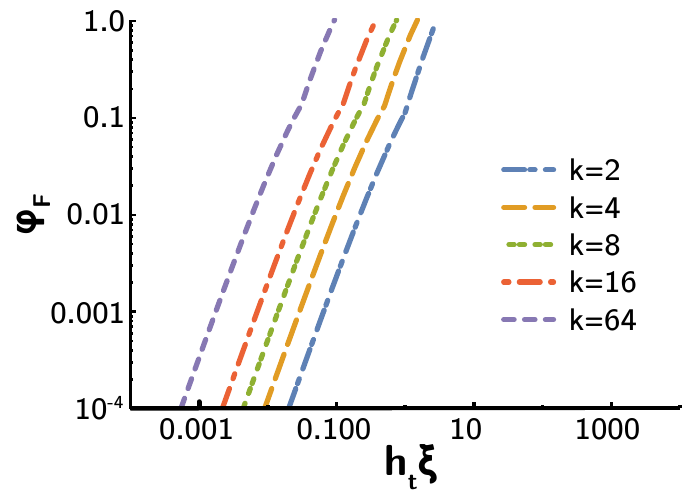}
  \end{subfigure}
  \\
    \begin{subfigure}[b]{0.33\textwidth}
    \includegraphics[width=\textwidth]{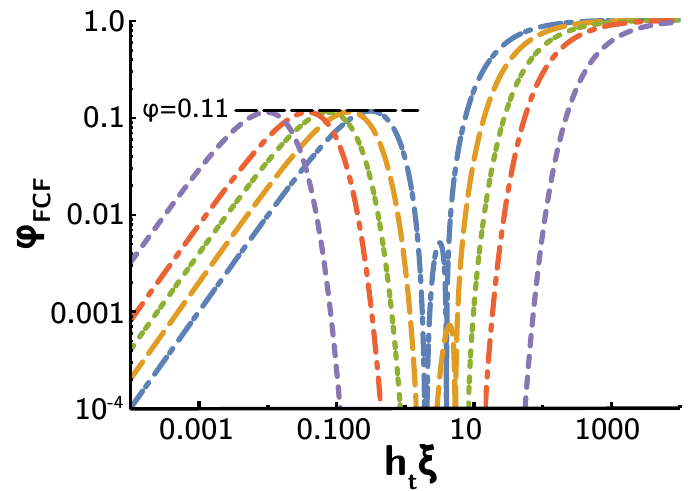}
    \caption{$\Psi \sim$ backward Euler}
    \label{fig:trap:be}
  \end{subfigure}
  \begin{subfigure}[b]{0.33\textwidth}
    \includegraphics[width=\textwidth]{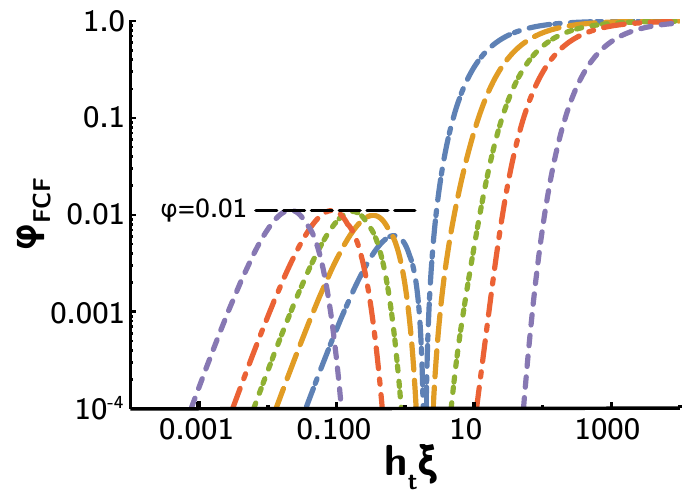}
        \caption{$\Psi \sim$ L-stable SDIRK-22}
    \label{fig:trap:sdirk}
  \end{subfigure}
  \begin{subfigure}[b]{0.33\textwidth}
    \includegraphics[width=\textwidth]{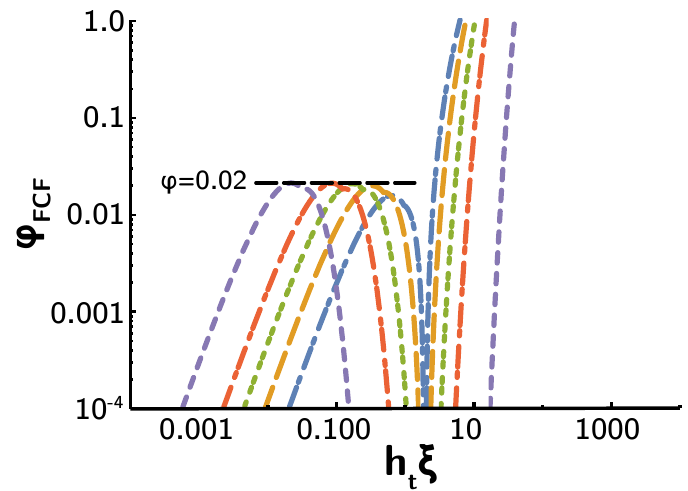}
    \caption{$\Psi \sim$ implicit trapezoid}
    \label{fig:trap:it}
  \end{subfigure}
\caption{Worst-case convergence factors for using the implicit trapezoid method on the fine grid, $\Phi$, and 
either backward Euler, implicit trapezoid, or 2nd-order L-stable SDIRK-22 on the coarse grid, $\Psi$, as a function
of $h_t \xi$, and for coarsening factors $k\in\{2,4,8,16,64\}$. }
  \label{fig:trap}
\end{figure}

In the case that $h_t\xi \gg 10$, Wu \cite{SLWu_2016} proposes a modification where $\Phi^k$ consists of two
steps using an L-stable scheme of the same order (SDIRK2 instead of the implicit trapezoid method or 4th-order 
Lobatto III-C instead of 4th-order Gauss) followed by $k-2$ steps using the implicit trapezoid method
(or 4th-order Gauss integrator). This does change the underlying problem being solved, that is, Parareal/MGRIT converges
to the solution based on alternating time steps, where two out of every $k$ time steps are now being integrated
with a different scheme, rather than the original implicit trapezoid or Gauss4 on every step. A detailed analysis
of this method is given in \cite{SLWu_2016}, however, the framework used here provides a simple conceptual
understanding as well. 

In considering convergence of an L-stable scheme for large $h_t\xi$, the key point is that
$\lim_{h_t\xi \to \infty} |\lambda_i| = 0$, for all eigenvalues $\{\lambda_i\}$ of $\Phi$. From Proposition \ref{prop:L},
using an L-stable scheme on both grids guarantees convergence for large $h_t\xi$, while such a result does
not (necessarily) hold in the A-stable case. However, suppose we change one step of the F-points to be an
L-stable scheme. Then, the $|\lambda_i^k|$ term that appears in convergence bounds now takes the form
$|\eta_i\lambda_i^{k-1}|$, where $\{\eta_i\}$ is the eigenvalue corresponding to the L-stable time step. If we take
this limit as $h_t\xi \to \infty$, because $|\lambda_i|$ is bounded, the limit is zero. For purposes
of Parareal and MGRIT, this makes the fine-grid appear L-stable, and Proposition \ref{prop:L} applies.

The implicit trapezoid rule is a fine-grid propagation scheme for which it is difficult to find a coarse-grid propagation
scheme that is effective for all $h_t$ and $h_x$. Indeed, in our testing, no other schemes appear substantially better
than results in Figure \ref{fig:trap}. For many practical applications, the regime of effective convergence is sufficient,
but in some cases large $h_t\xi$ may be important too.

\subsection{FCF-relaxation}\label{sec:irk:weight}

\subsubsection{High-Order integrators and L-stability}\label{sec:irk:weight:ho}

\revision{
From results presented so far, one might notice that FCF-relaxation is particularly useful with high-order integration
schemes, and either L-stable integration schemes or larger coarsening factors (see Figures \ref{fig:esdirk} and \ref{fig:trap} 
and Table \ref{tab:imp}). We start by discussing this phenomenon and the significance of high-order integrators.
Figure \ref{fig:fcf} plots the eigenvalue magnitude $|\lambda|$, convergence bounds for F-relaxation, and
convergence bounds for FCF-relaxation, as a function of $h_t\xi$. For $h_t\xi \ll 0.1$, $|\lambda| \approx 1$
and FCF-relaxation will not offer significant improvement in convergence for these eigenmodes (because
$|\lambda^k|\approx 1$). However, in this case, just F-relaxation is typically sufficient for rapid convergence when $h_t\xi \ll 1$,
likely due to $\Psi$ and $\Phi^k$ both being an accurate approximation to the exponential (i.e., exact solution) in
this regime.

For $h_t\xi \gg 1$, L-stable schemes yield $|\lambda|\to0$ as $h_t\xi \to\infty$, and FCF-relaxation guarantees
rapid convergence. Conversely, using A-stable schemes, often divergence is observed for F-relaxation 
as $h_t\xi \to \infty$, while A-stability only ensures $|\lambda|\leq 1$. The implicit midpoint method (i.e., Crank-Nicolson/trapezoid
method) is actually unbounded as $h_t\xi\to\infty$, but other high-order RK schemes asymptote to some constant. 
Furthermore, we observe that $\lim_{h_t\xi\to\infty} |\lambda|$ decreases for A-stable schemes as integration order increases,
for all SDIRK and ESDIRK schemes we have considered, with limiting value less than one.
Then, raising $|\lambda|^k$ for sufficiently large $k$ can overcome the (bounded) divergence observed with F-relaxation
on large $h_t\xi$, and provide a convergent method (see Table \ref{tab:imp} and Figures \ref{fig:esdirk} and
\ref{fig:fcfA}), with better convergence associated with higher-order integration or larger $k$. 

Finally, consider the middle regime of $h_t\xi \approx 1$, give or take 1--2 orders of magnitude. Here, convergence factors 
of just F-relaxation is typically moderately large, and FCF-relaxation offers some benefit, but not as much as for large
$h_t\xi \gg1$. However, observe that the convergence behavior of Parareal/MGRIT with F-relaxation shifts
to the right with $h_t\xi$ (in the sense of plots in Figure \ref{fig:fcf}) with increasing order of integration (see Figures \ref{fig:fcfL}
and \ref{fig:fcfA}). Although the shift appears minor, the effect of this is that the maximum convergence is obtained
at larger $h_t\xi$ with increasing integration order. For L-stable schemes and many A-stable schemes, the
corresponding $\lambda(h_t\xi)$ is then smaller, and $|\lambda^k|$ provides a more
effective reduction of the maximum convergence factor with higher-order integration. This effect can be seen for orders 2--4 A- and L-stable
schemes in Figure \ref{fig:fcf}. A plausible explanation for the shift in $h_t\xi$ with higher-order integration is
that high-order schemes are better approximations to the exponential, and $\Psi$ will approximate $\Phi^k$ well
over a slightly larger interval of $h_t\xi,\in(0,\hat{z}_\text{max})$, although this is speculation. 
}
\begin{figure}[!h]
  \centering
  \begin{subfigure}[b]{\textwidth}
    \includegraphics[width=0.33\textwidth]{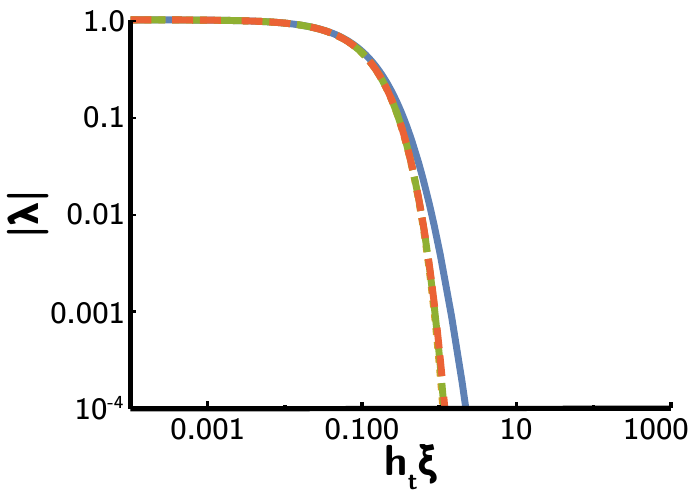}
    \includegraphics[width=0.33\textwidth]{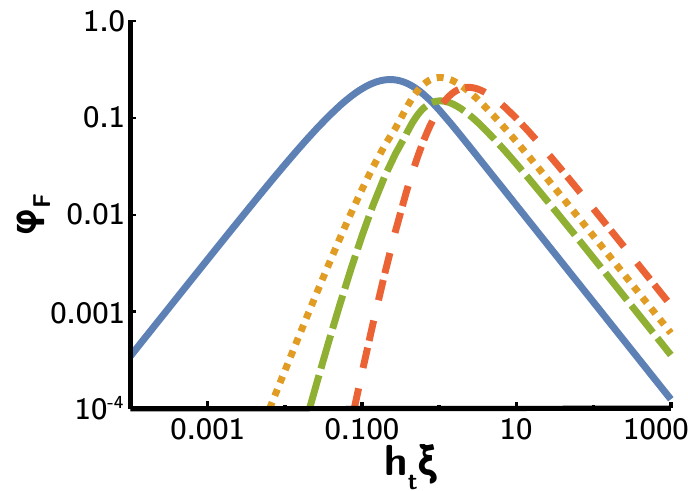}
    \includegraphics[width=0.33\textwidth]{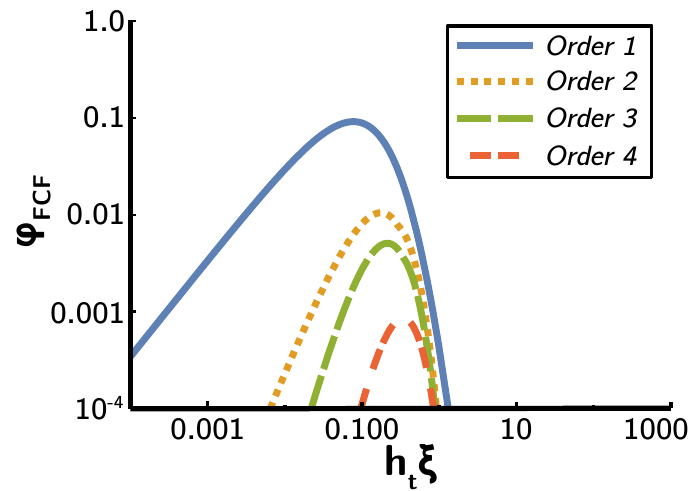}
    \caption{L-stable SDIRK schemes.}
    \label{fig:fcfL}
  \end{subfigure}
\\
  \begin{subfigure}[b]{\textwidth}
    \includegraphics[width=0.33\textwidth]{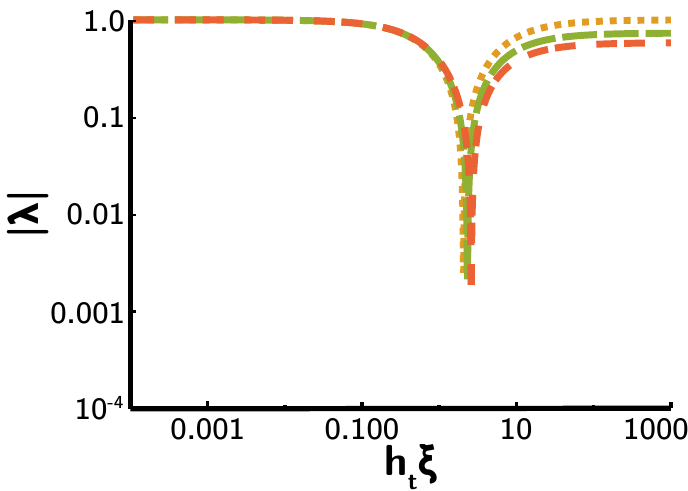}
    \includegraphics[width=0.33\textwidth]{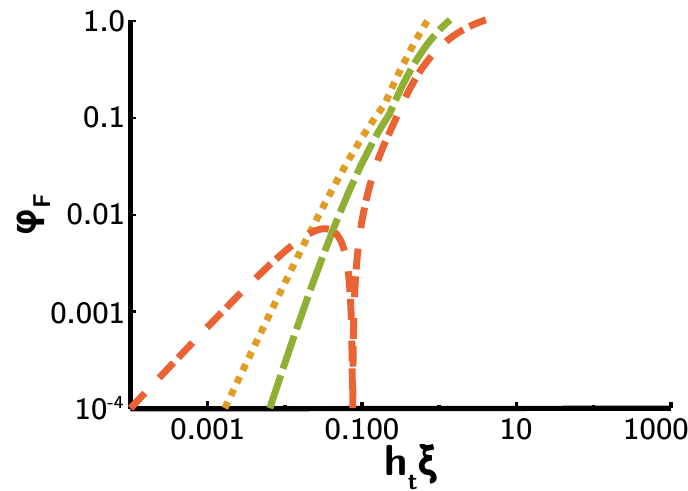}
    \includegraphics[width=0.33\textwidth]{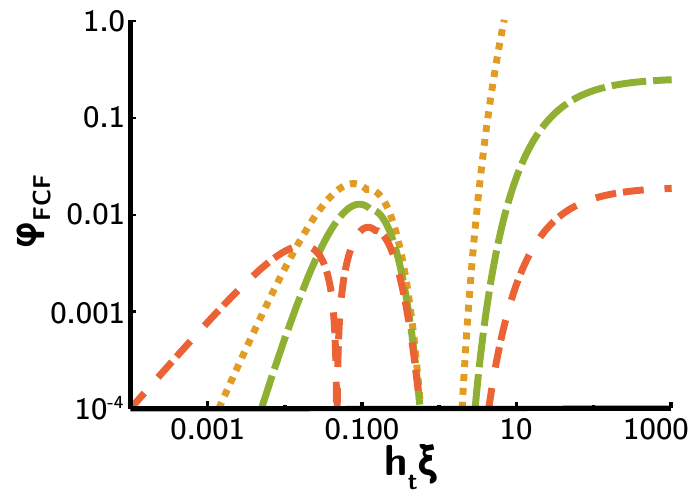}
    \caption{A-stable SDIRK schemes.}
    \label{fig:fcfA}
  \end{subfigure}
\caption{\revision{Eigenvalue $\lambda$, convergence bounds for F-relaxation, and convergence bounds for FCF-relaxation,
 as a function of $h_t\xi$, for A- and L-stable SDIRK schemes of order 1--4 (there is no 1st-order scheme that is A-stable
 but not L-stable), and coarsening factor $k = 8$.}}
  \label{fig:fcf}
\end{figure}

\subsubsection{Weighted iterations}\label{sec:irk:weight:weight}

Sometimes in iterative-type methods, a weighting is applied to the correction to improve convergence, such
as the damping factor in Jacobi or weight in successive over relaxation. In principle, the same concept can be applied in the 
Parareal/MGRIT context as well. In a (linear) Parareal framework, one can consider 
\begin{align*}
\mathbf{u}_{n+1}^{\nu+1} & = \Phi^\nu \mathbf{u}_n^\nu + \theta_n^\nu\left[\Psi \mathbf{u}_{n+1}^{\nu+1} - \Psi \mathbf{u}_{n+1}^\nu\right],
\end{align*}
where the subscript $n$ denotes approximate $\mathbf{u}$ at the $n$th fine-grid time point and superscript $\nu$ denotes iteration
number. This is the $\theta$-Parareal scheme introduced in \cite{GAriel_etal_2018a}, where $\theta$ is some scaling factor for the coarse-grid
operator that may be a scalar or linear operator. Here, we assume it is a scalar, and derive an interesting relation to
FCF-relaxation.

For scalar $\theta$, it can be absorbed into the $\Psi$ operator, and the convergence theory for two-level linear MGRIT/Parareal
applies. Thus, consider the tight bounds on Parareal/two-level MGRIT with F-relaxation as a function of $\theta$, given by
\begin{align*}
\varphi_F(\theta) & = \max_{h_t\xi} \frac{|\theta\mu(h_t\xi) - \lambda(h_t\xi)^k|}{1 - \theta|\mu(h_t\xi)|} : = \max_{h_t\xi} \mathcal{M}(\theta,h_t\xi),
\end{align*}
for $\theta \in[0,1]$ and $h_t\xi$ such that $|\theta\mu|,|\lambda| < 1$ for all $\xi$, that is, the coarse-
and fine-grid propagators are stable. It turns out, this weighting has an interesting relation to FCF-relaxation. In the case that $\theta$
is an operator, it is likely the case that $\Phi$ and $\Psi$ are no longer diagonalizable under the same eigenvectors, in which case the 
more general theory developed in \cite{BSSouthworth_2018a} is necessary to perform a tight two-grid analysis. 

For scalar $\theta\in[0,1]$, note that
\begin{align*}
\lim_{h_t\xi\to0} \frac{|\theta\mu(\xi) - \lambda(h_t\xi)^k|}{1 - \theta|\mu(h_t\xi)|}  = 
	\lim_{h_t\xi\to 0} \frac{|\lambda(h_t\xi)^k||\mu(h_t\xi) - \lambda(h_t\xi)^k|}{1 - |\mu(h_t\xi)|} = \begin{cases} 1 & \theta \in[0,1) \\ 0 & \theta = 1\end{cases}.
\end{align*}
That is, for any $\theta \neq 0$, iterations lose the desirable property proved in Corollary \ref{cor:limits}, where $\Psi$
exactly approximates $\Phi^k$ as $h_t\xi \to 0$. Moreover, there are often many eigenvalues $h_t\xi\approx 0$, 
corresponding to small spatial eigenvalues. Using $\theta \neq 1$ results in Parareal/MGRIT convergence factors being $\approx 1$
when applied to these eigenmodes.\footnote{Effectively the same results hold for the exact bounds in \eqref{eq:v_bounds},
but $\theta\neq1$ yields $\approx N_c/(1+N_c) \approx 1$.}

However, there is still something advantageous about iterations $\theta \neq 1$, particularly \textit{after} the first iteration. 
Note that $\theta = 1$ corresponds to a CF-relaxation, that is,
\begin{align*}
\frac{|\theta\mu(h_t\xi) - \lambda(h_t\xi)^k|}{1 - \theta|\mu(h_t\xi)|} \hspace{3ex}\mapsto_{\theta = 0}\hspace{3ex} |\lambda(h_t\xi)^k|,
\end{align*}
which is exactly the additional factor that FCF-relaxation adds to error propagation over just F-relaxation \eqref{eq:v_bounds}.
From derivations in Section 5.3 of Southworth\cite{BSSouthworth_2018a}, the maximum over $h_t\xi$ of multiple Parareal/MGRIT iterations
can be taken as a maximum
over the product. It follows that applying two iterations of Parareal/two-level MGRIT with F-relaxation, first
with $\theta=1$ followed by $\theta=0$, is exactly equivalent to one iteration of two-level MGRIT with FCF-relaxation.

\begin{figure}[!h]
  \centering
  \begin{subfigure}[b]{0.33\textwidth}
    \includegraphics[width=\textwidth]{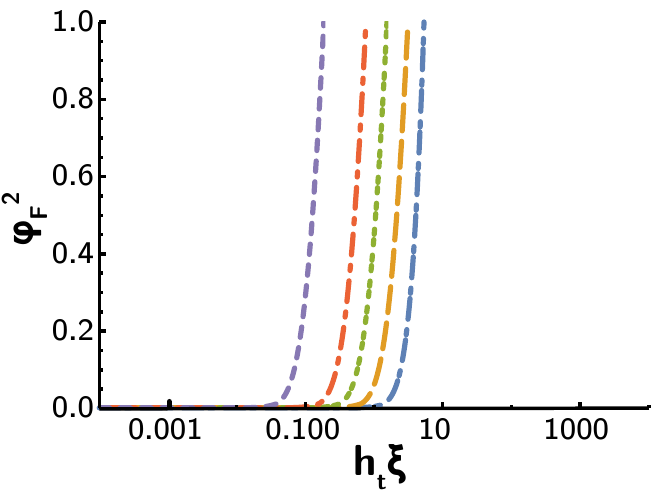}
    \caption{$\theta_1=1$, $\theta_2=1$}
  \end{subfigure}
   \begin{subfigure}[b]{0.33\textwidth}
    \includegraphics[width=\textwidth]{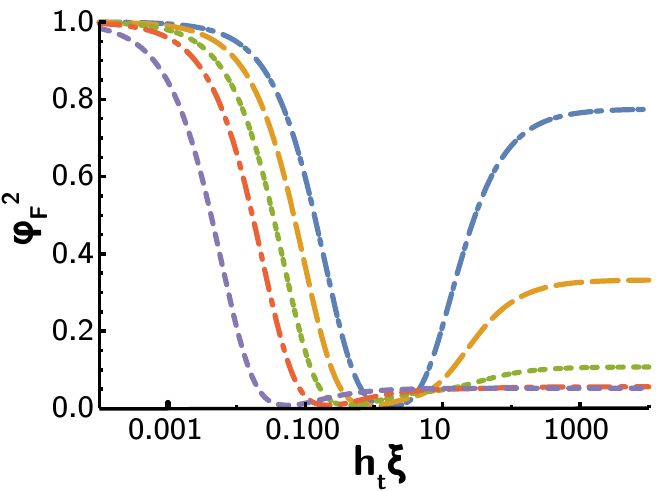}
    \caption{$\theta_1=0.25$, $\theta_2=0.25$ }\label{subfig:b}
  \end{subfigure}
  \begin{subfigure}[b]{0.33\textwidth}
    \includegraphics[width=\textwidth]{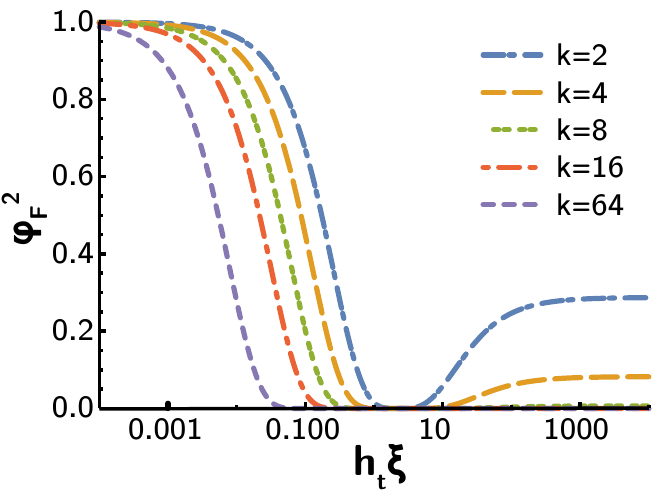}
    \caption{$\theta_1=0$, $\theta_2=0$}\label{subfig:c}
  \end{subfigure}
  \\\vspace{3ex}
    \begin{subfigure}[b]{0.33\textwidth}
    \includegraphics[width=\textwidth]{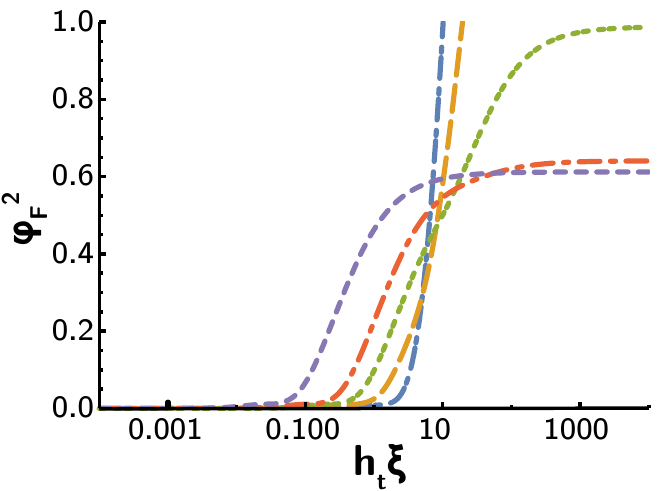}
    \caption{$\theta_1=1$, $\theta_2=0.25$}
  \end{subfigure}
   \begin{subfigure}[b]{0.33\textwidth}
    \includegraphics[width=\textwidth]{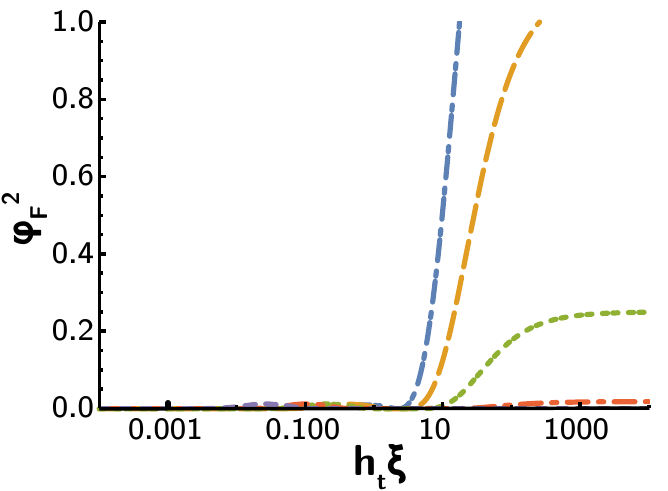}
    \caption{$\theta_1=1$, $\theta_2=0$ (i.e., one iter w/ FCF) }
  \end{subfigure}
\caption{Worst case (total) convergence factors for two successive iterations of $\theta$-Parareal with F-relaxation,
with weights $\theta_1$ and $\theta_2$, respectively, and coarsening factors $2,4,8,16$, and $64$. Three-stage,
3rd-order A-stable ESDIRK is used for coarse- and fine-grid propagation.}
  \label{fig:theta}
\end{figure}

In this case and a number of other tested examples, a second iteration with $\theta = 0$, that is, FCF-relaxation,
appears to be optimal in some sense. It is also advantageous from a computational perspective, because adding
a CF-relaxation is significantly cheaper than performing a full Parareal/MGRIT iteration with F-relaxation. Nevertheless, it
is interesting to see the relation of FCF-relaxation with the $\theta$-weighting. In particular, it raises the question
of whether a polynomial-type relaxation could be used with successive weights $\{\theta_0,\theta_1,...\}$ that
are ``optimal'' in some sense.

\begin{remark}
Recall for two-grid convergence, we are interested in the iteration (in the notation of \cite{BSSouthworth_2018a})
\begin{align*}
I - B_\Delta^{-1} A_\Delta & = I - \begin{bmatrix} I \\ \Psi & I \\ \vdots &  & \ddots \\ \Psi^{N_c-1} & ... & \Psi & I \end{bmatrix}
	\begin{bmatrix} I \\ -\Phi^k & I \\ & \ddots & \ddots \\ && -\Phi^k & I \end{bmatrix}.
\end{align*}
Here, $A_\Delta$ denotes the Schur complement corresponding to an exact two-level correction, and its inverse is
approximated by substituting $\Psi$ for $\Phi^k$. From an iterative methods perspective, arguably a more intuitive
weighting than above is to consider $I - \omega B_\Delta^{-1}A_\Delta$ for $\omega\in(0,2)$, that is, weighting the entire coarse-grid
correction. Working through the algebra, one arrives at similar bounds on two-grid convergence, which take the form
\begin{align*}
\varphi_{F,\omega} & = \max_{\xi} \left[ (1-\omega) + \omega\frac{|\mu(h_t\xi) - \lambda(h_t\xi)^k|}{1 - |\mu(h_t\xi)|} \right] , \\
\varphi_{FCF,\omega} & = \max_{\xi} \left[ (1-\omega)|\lambda(h_t\xi)^k| + \omega \frac{|\lambda(h_t\xi)^k||\mu(h_t\xi) - \lambda(h_t\xi)^k|}{1 - |\mu(h_t\xi)|} \right]. 
\end{align*}
Interestingly, for no tested Runge-Kutta schemes does such an approach appear to offer significant advantages
over $\omega = 1$ (unweighted). 
\end{remark}

\section{Numerical results}\label{sec:ex}

As an example, we consider the diffusion equation,
\begin{align}\label{eq:ex_diff}
	u_t - \Delta u = f(\mathbf{x},t), \quad (\mathbf{x},t) \in \Omega\times(0,1],
\end{align}
in a bounded domain $\Omega \subset \mathbb{R}^2$, with solution-independent forcing term 
\[
	f(\mathbf{x},t) = (\tau\cos(\tau t) + 2\kappa^2\sin(\tau t))\sin(\kappa x_1)\sin(\kappa x_2), \quad \mathbf{x} = (x_1,x_2)^T,
\]
and constants $\tau = 13\pi/6$, $\kappa = \pi$. We prescribe an initial condition for $u$ at $t = 0$, $u(\mathbf{x},0) = \sin(\kappa x_1)\sin(\kappa x_2)$, and impose the Neumann spatial boundary condition 
\[
	\mathbf{n}\cdot\nabla u = (\kappa\cos(\kappa x_1)\sin(\kappa x_2)\sin(\tau t), \kappa\sin(\kappa x_1)\cos(\kappa x_2)\sin(\tau t))^T, \quad (\mathbf{x},t)\in\partial\Omega \times (0,1].
\]
We discretize the spatial domain, $\Omega$, using continuous $Qp$ finite elements, $p = 1, 2, \ldots$. Denoting the
mass and stiffness matrix of the discretized Laplacian by $\mathcal{M}$ and $\mathcal{S}$, respectively, we obtain
\[
	\mathcal{M}\mathbf{u}'(t) - \mathcal{S}\mathbf{u}(t) = \mathbf{f}(t),
\]
which is of the form of our model problem \eqref{eq:problem} with operator $\mathcal{L} = -\mathcal{M}^{-1}\mathcal{S}$. For
consistency with our underlying assumption of an SPD operator, we used a lumped mass matrix and structured grid, in which
case $\mathcal{M}^{-1}$ is a constant scaling. The time interval is discretized using various SDIRK and ESDIRK schemes,
already considered in Table \ref{tab:imp}, with constant time-step $h_t$. Butcher tableaux can be found in the appendix.
Note that, for the diffusion problem, the values of the product $h_t\xi$, where $\xi$ denotes an eigenvalue of $\mathcal{L}$,
are between zero and a constant of order $\mathcal{O}(h_t/h^2)$, where $h$ represents the maximum of the step sizes in both
spatial dimensions.

In the following, we report on tests of solving the diffusion problem \eqref{eq:ex_diff}, as implemented in the driver \texttt{drive-diffusion}
from the open-source package XBraid \cite{xbraid-package}, and using the modular finite element library MFEM \cite{mfem-library}.
We consider two space-time domains, both with time interval $(0,1]$, but with different spatial domains. More precisely, we choose
either a star-shaped spatial domain or a beam, defined by (refinements of) the meshes \texttt{star.mesh} and \texttt{beam-quad.mesh}
from MFEM's \texttt{data} directory; see Figure \ref{fig:ex_diff_meshes}. Convergence is measured by the factor $\rho$, defined as
the maximum ratio of space-time residual norms over two successive iterations. Iterations are carried out until the space-time residual
norm is smaller than $10^{-13}$.

\begin{figure}[h!t]
	\centerline{\includegraphics[width=.3\textwidth]{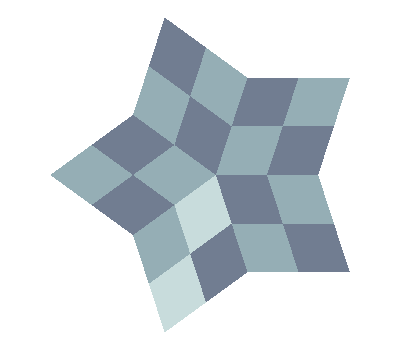}\quad\includegraphics[width=.3\textwidth]{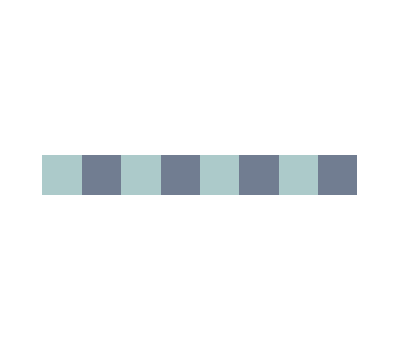}}
	\caption{MFEM's \texttt{star.mesh} (left) and \texttt{beam-quad.mesh} (right).\label{fig:ex_diff_meshes}}
\end{figure}

\subsection{L-stable time integration methods}

As shown in the previous sections, using L-stable schemes leads to robust and rapid Parareal and two-level MGRIT convergence. In this section,
we first aim at confirming this observation in a practical setting, and then compare with full multilevel MGRIT V-cycles on the star-shaped
domain. The time interval $[0,1]$ is discretized using various SDIRK schemes or the ESDIRK-32 method with a time-step size of
$h_t = 1/4096$. In space, either Q1 elements of size $0.06\times 0.06$, or Q2 or Q3 elements of size $0.12\times 0.12$ are used,
with the order of the finite-element discretization chosen such that it matches the order of the SDIRK or ESDIRK scheme. Although
these are smaller spatial problems than typically considered in practice, as discussed previously, the only aspect that matters for
convergence is the ratio of $h_t$ and $h_x$. Figure \ref{fig:evals_ex_diff} shows the distribution of the eigenvalues $\{\xi\}$ of the
operator $\mathcal{L}$ for the three spatial discretizations. In the case of bilinear elements, eigenvalues are distributed in the
interval $(0,6{,}793)$, and in the interval $(0, 8{,}491)$ or $(0, 24{,}077)$ when using biquadratic or bicubic elements, respectively.
Thus, on the fine grid the product $h_t\xi$ takes values between 0 and about 1.66, 2.08, or 5.88, respectively.

Table \ref{tab:ex_diff_star} shows convergence factors of two-level and full multilevel MGRIT V-cycles using $\lfloor\log_k(4096)\rfloor$
multigrid levels and even coarsening factors between two and 32. Two-level results are in perfect agreement with the theoretical bounds in
Table \ref{tab:imp}. In the case of SDIRK-22+Q2, for example, theory predicts that when using a coarsening factor of two or four, the
maximum value $\varphi_F$ is obtained for $h_t\xi = 5$ or $h_t\xi = 2.1$, respectively. In the tests, $h_t\xi$ takes values between 0
and 2.07 for this discretization and, thus, the maxima are not obtained, resulting in faster convergence than the maximum bounds.
Multilevel results show that convergence degrades, especially for small coarsening factors and for F-relaxation. Note, however, that 
for factor-2, factor-4, or factor-8 coarsening, the multilevel hierarchy consists of 11, six, or four grid levels, respectively, whereas for
the other coarsening factors, only three-level V-cycles are performed.

Figure \ref{fig:mg_conv} details multilevel convergence for backward Euler and the ESDIRK-32 scheme, plotting the maximum
convergence factors as functions of the number of grid levels. \revision{Interestingly, multilevel convergence of MGRIT with F-relaxation
asymptotes (regarding increasing grid level) significantly slower than the worst-case convergence of the two-level method in the case of backward 
Euler, while MGRIT with FCF-relaxation as well as MGRIT with F- or FCF-relaxation for ESDIRK-32 observe multilevel convergence factors 
approximately equal to (theoretical) worst-case two-level convergence (see Table \ref{tab:imp}). }The latter results make sense to some extent 
because coarse grids take progressively larger time steps, but for 
these (L-stable) schemes, (two-grid) convergence factors are smallest for very large time steps. However, it is not yet understood exactly
why this holds, nor why it does not hold for backward Euler with F-relaxation.

\begin{table}[h!t]
\renewcommand{\tabcolsep}{0.195cm}
\renewcommand{\arraystretch}{1.2}
\centering
\begin{tabular}{|>{\centering}m{.05\textwidth}|>{\centering}m{.15\textwidth}|>{\centering}m{.05\textwidth}|>{\centering}m{.09\textwidth}|>{\centering}m{.09\textwidth}|>{\centering}m{.09\textwidth}|>{\centering}m{.09\textwidth}|>{\centering\arraybackslash}m{.09\textwidth}|}
	\hline\multicolumn{3}{|r|}{$k = ~$} & 2 & 4 & 8 & 16 & 32\\ 
	\hline\hline
	& BWE + $Q1$ & $\rho$ & 0.12/0.05 & 0.20/0.08 & 0.24/0.09 & 0.27/0.10 & 0.28/0.10 \\
	2- & SDIRK-22 + $Q2$ & $\rho$ & 0.15/0.008 & 0.21/0.009 & 0.25/0.01 & 0.21/0.01 & 0.19/0.01 \\
	level & ESDIRK-32 + $Q2$ & $\rho$ & 0.15/0.007 & 0.22/0.008 & 0.25/0.008 & 0.25/0.007 & 0.25/0.008 \\
        & SDIRK-33 + $Q3$ & $\rho$ & 0.12/0.003 & 0.13/0.004 & 0.11/0.005 & 0.12/0.005 & 0.13/0.004 \\
	\hline\hline
	& BWE + $Q1$ & $\rho$ & 0.50/0.10 & 0.36/0.09 & 0.30/0.08 & 0.25/0.10 & 0.27/0.10 \\
	V- & SDIRK-22 + $Q2$ & $\rho$ & 0.30/0.03 & 0.28/0.003 & 0.24/0.007 & 0.22/0.01 & 0.25/0.009 \\
	cycle & ESDIRK-32 + $Q2$ & $\rho$ & 0.29/0.004 & 0.26/0.003 & 0.25/0.01 & 0.24/0.01 & 0.25/0.009 \\
	& SDIRK-33 + $Q3$ & $\rho$ & 0.24/0.05 & 0.17/0.008 & 0.15/0.004 & 0.14/0.004 & 0.14/0.004 \\
	\hline
\end{tabular}
\caption{Maximum convergence factors $\rho$ for various SDIRK schemes and ESDIRK-32. Convergence factors for using F-relaxation are given first, followed by convergence factors of iterations using FCF-relaxation.
}
\label{tab:ex_diff_star}
\end{table}

\begin{figure}[h!t]
	\begin{subfigure}[b]{0.49\textwidth}
    		\includegraphics[width=\textwidth]{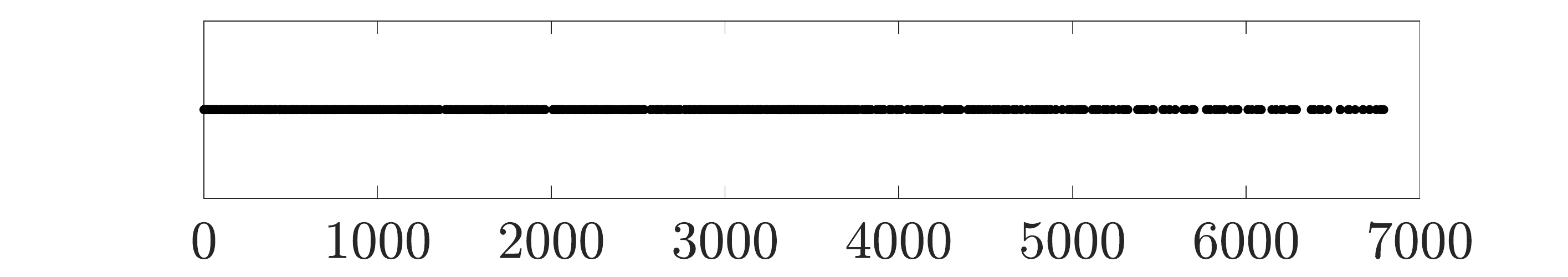}
    		\caption{$Q1$ elements of size $0.06\times 0.06$}
 	 \end{subfigure}~\begin{subfigure}[b]{0.49\textwidth}
    		\includegraphics[width=\textwidth]{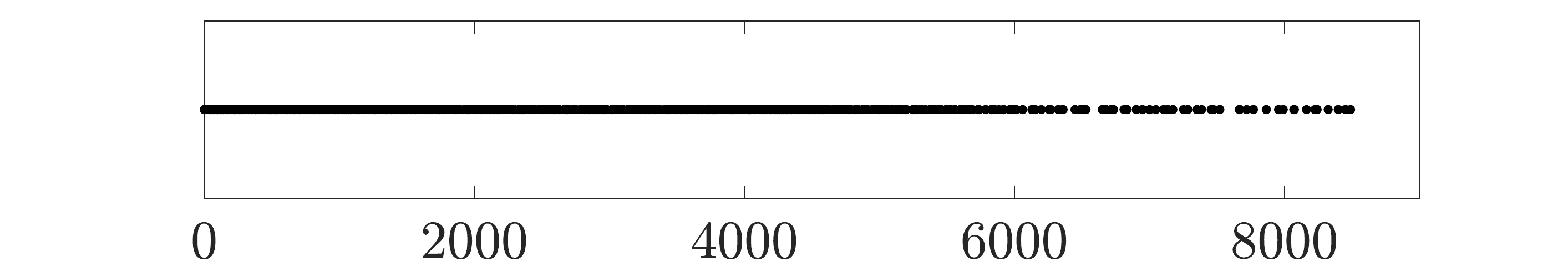}
    		\caption{$Q2$ elements of size $0.12\times 0.12$}
 	 \end{subfigure}\\[.5\baselineskip]
	  \begin{subfigure}[b]{\textwidth}
    		\centerline{\includegraphics[width=.7\textwidth]{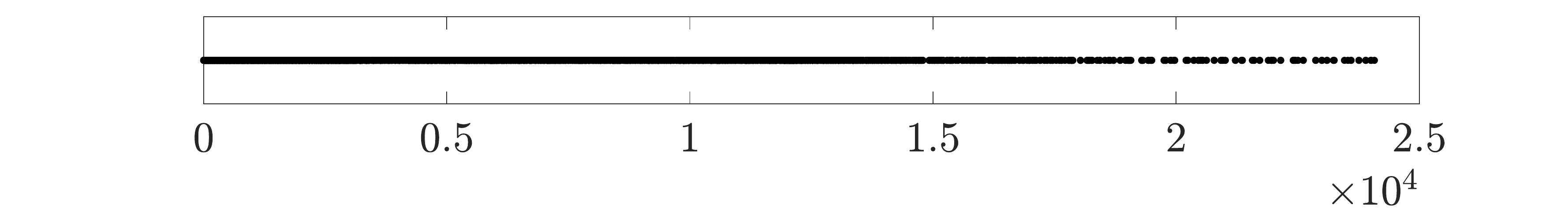}}
    		\caption{$Q3$ elements of size $0.12\times 0.12$}
 	 \end{subfigure}	
	\caption{Distribution of eigenvalues of the operator $\mathcal{L}$ for the diffusion problem discretized using continuous $Q1$, $Q2$, or $Q3$ elements.\label{fig:evals_ex_diff}}
\end{figure}

\begin{figure}[h!t]
	\centerline{
	\includegraphics[width=.53\textwidth]{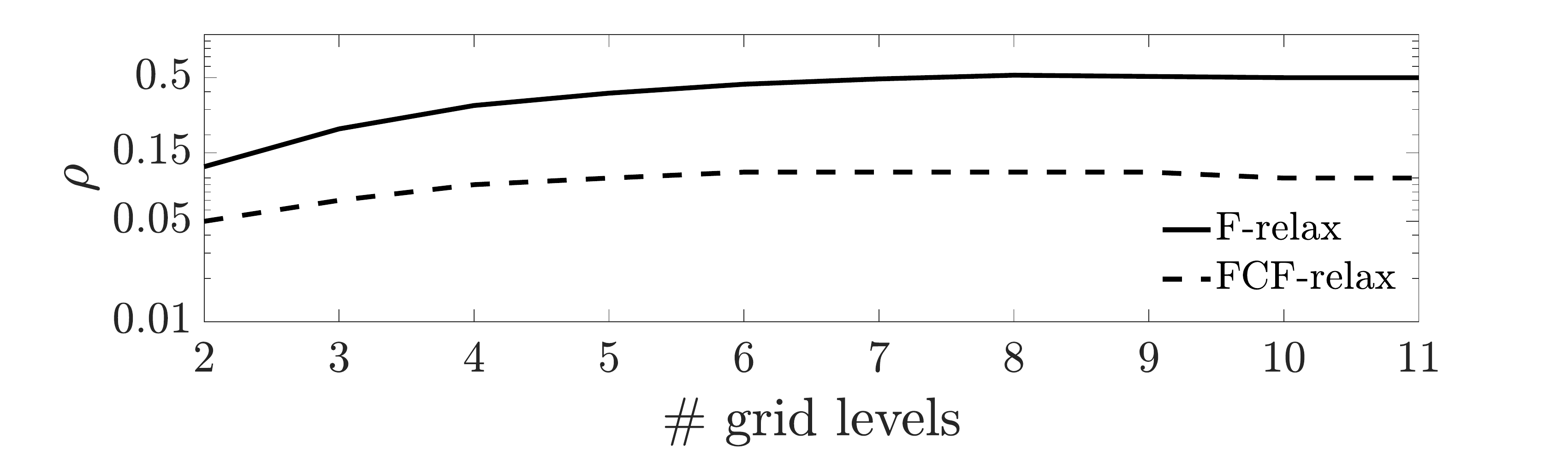}
	\includegraphics[width=.53\textwidth]{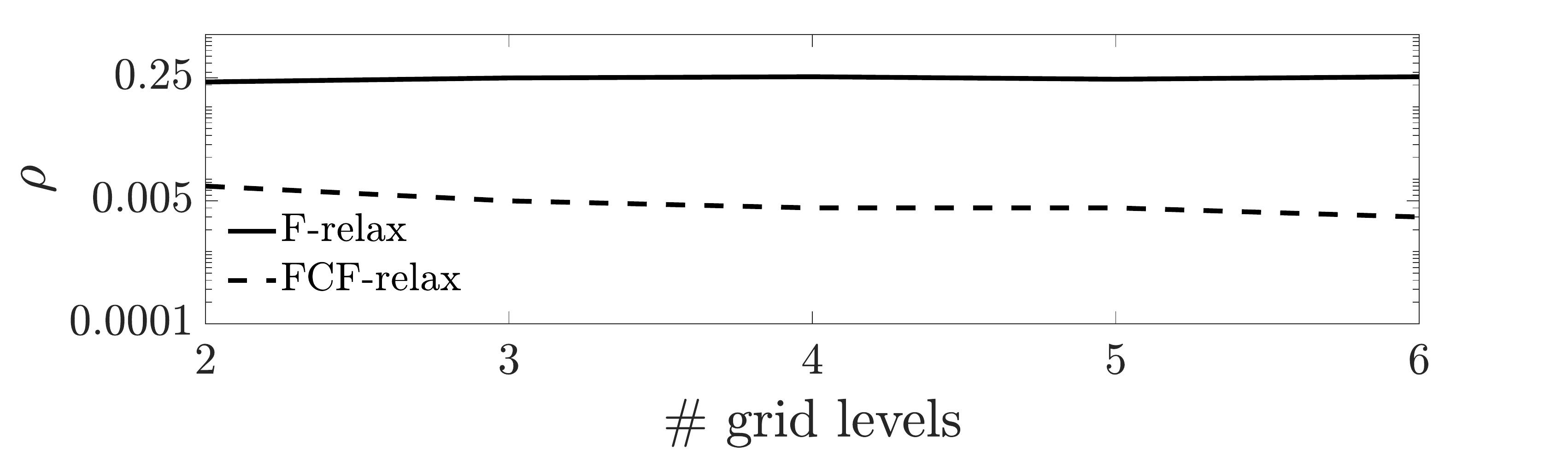}}
	\caption{Maximum convergence factors $\rho$ of MGRIT V-cycles with F- and FCF-relaxation. At left, MGRIT with factor-2 coarsening for backward Euler in time and $Q1$ elements in space and at right, MGRIT with factor-4 coarsening for ESDIRK-32 in time and $Q2$ elements in space. \label{fig:mg_conv}}
\end{figure}

\subsection{A-stable time integration methods}

The SDIRK schemes considered in the previous section yield fast MGRIT convergence. However, the cost (in terms of number
of linear solves) grows with the order of the method. Some ESDIRK schemes such as ESDIRK-33 or the trapezoid method are
not only computationally cheaper, but also offer properties desired in an integration scheme such as being stiffly accurate. However,
the methods are only A-stable. The theory presented in Section \ref{sec:irk:irk} indicates that while using L-stable Runge-Kutta schemes
results in fast convergence, regardless of the mesh sizes in space and time, this is not necessarily the case for A-stable schemes.
Especially when using F-relaxation, convergence depends heavily on the coarsening factor and on the values of the product $h_t\xi$.
To determine the impact of these restrictions in a practical setting, we consider the diffusion problem \eqref{eq:ex_diff} on the beam
mesh, discretized using bilinear quadrilateral elements in space and the A-stable schemes ESDIRK-33 or the trapezoid method in
time. In this setting, local Fourier analysis \cite{ABrandt_1977b} provides accurate predictions for the eigenvalues of the operator
$\mathcal{L}$ for arbitrary spatial mesh sizes $h$. For $h=1/4$, for example, the maximum eigenvalue of $\mathcal{L}$ is $\xi=384$,
and it increases by a factor of four when decreasing the mesh size by a factor of two. In a first experiment, we consider fixing $h=1/16$
(i.\,e., $\xi\in(0,6144)$) and varying the time-step size from $h_t = 1/8192$ to $h_t = 1/512$. For the smallest time-step size,
$h_t = 1/8192$, the product $h_t\xi$ takes values between 0 and 3/4, and for the largest time-step size, $h_t = 1/512$, $h_t\xi\in(0,12)$.
Results using ESDIRK-33 are shown in Table \ref{tab:ex_esdirk33}. Consistent with theoretical results, we see that two-level MGRIT
with FCF-relaxation is much more robust than Parareal/two-level MGRIT with F-relaxation. While F-relaxation does not lead to a convergent
two-level method for large values of $h_t\xi$, convergence of two-level MGRIT with FCF-relaxation only degrades for small coarsening
factors as $h_t\xi$ gets larger. Indeed, the factor of $|\lambda_i|^k\ll 1$ that arises in FCF-relaxation convergence bounds appears to
be fundamental to the success of this scheme.

\begin{table}[h!t]
\renewcommand{\tabcolsep}{0.195cm}
\renewcommand{\arraystretch}{1.2}
\centering
\begin{tabular}{|>{\raggedright}m{.23\textwidth}|>{\centering}m{.03\textwidth}|>{\centering}m{.09\textwidth}|>{\centering}m{.09\textwidth}|>{\centering}m{.09\textwidth}|>{\centering}m{.09\textwidth}|>{\centering}m{.09\textwidth}|>{\centering\arraybackslash}m{.09\textwidth}|}
	\hline\multicolumn{2}{|r|}{$k = ~$} & 2 & 3 & 4 & 5 & 8 & 16 \\ 
	\hline\hline
	$h_t = 1/8192 ~(h_t\xi \in (0,3/4))$ & $\rho$ & 0.01/0.01 & 0.03/0.007 & 0.07/0.009 & 0.09/0.009 & 0.50/0.02 & >1/0.01 \\
	$h_t = 1/4096 ~(h_t\xi \in (0,3/2))$ & $\rho$ & 0.04/0.007 & 0.18/0.03 & 0.50/0.02 & 0.69/0.01 & >1/0.01 & >1/0.01 \\
	$h_t = 1/2048 ~(h_t\xi \in (0,3))$ & $\rho$ & 0.52/0.02 & 0.85/0.01 & >1/0.01 & >1/0.01 & >1/0.01 & >1/0.01 \\
	$h_t = 1/1024 ~(h_t\xi \in (0,6))$ & $\rho$ & >1/0.02 & >1/0.01 & >1/0.009 & >1/0.01 & >1/0.01 & >1/0.008 \\
	$h_t = 1/512 ~(h_t\xi \in (0,12))$ & $\rho$ & >1/0.6 & >1/0.02 & >1/0.01 & >1/0.01 & >1/0.08 & >1/0.01 \\
	\hline
\end{tabular}
\caption{Maximum two-level convergence factors $\rho$ for using ESDIRK-33 in time and $Q1$ elements of size $h\times h$,
$h=1/16$ in space. Convergence factors for using F-relaxation are given first, followed by convergence factors 
using FCF-relaxation; the notation $>1$ indicates no convergence to the desired tolerance.
}
\label{tab:ex_esdirk33}
\end{table}

The results in Table \ref{tab:ex_esdirk33} show that convergence of two-level MGRIT with F- or FCF-relaxation is generally quite
good for $h_t\xi\in (0,1)$. For large $h_t\xi$, FCF-relaxation is needed to obtain a convergent two-level method when using
ESDIRK-33 on the fine and on the coarse grid. The theory in Section \ref{sec:irk:irk} indicates that for $h_t\xi$ on the order
of 10--100, in addition to FCF-relaxation, odd or large coarsening factors may be needed for (good) convergence. Table
\ref{tab:ex_esdirk33_even_odd} considers a similar experiment as in Table \ref{tab:ex_esdirk33}, but with larger values of
$h_t\xi$ and only MGRIT with FCF-relaxation. More precisely, we consider the same setting as in Table \ref{tab:ex_esdirk33},
but we fix $h=1/32$ and consider $h_t=1/512$, $h_t=1/256$ and $h_t=1/128$. Results do not exactly agree with the theory,
but are consistent on a conceptual level. The last row is particularly representative ($h_t = 1/128$), where divergence is
observed for $k=2$, $k=3$ converges more than $2\times$ as fast as $k=4$, and for $k=5$ and greater, very fast convergence
is obtained. In general, convergence of MGRIT with factor-16 coarsening does not degrade as $h_t\xi$ increases, consistent with
Figure \ref{fig:esdirk}.

\begin{table}[h!t]
\renewcommand{\tabcolsep}{0.195cm}
\renewcommand{\arraystretch}{1.2}
\centering
\begin{tabular}{|>{\centering}m{.22\textwidth}|>{\centering}m{.03\textwidth}|>{\centering}m{.07\textwidth}|>{\centering}m{.07\textwidth}|>{\centering}m{.07\textwidth}|>{\centering}m{.07\textwidth}|>{\centering}m{.07\textwidth}|>{\centering\arraybackslash}m{.07\textwidth}|}
	\hline\multicolumn{2}{|r|}{$k = ~$} & 2 & 3 & 4 & 5 & 8 & 16\\ 
	\hline\hline
	$h_t = 1/512 ~(h_t\xi \in (0,48))$ & $\rho$ & >1 & 0.31 & 0.39 & 0.01 & 0.008 & 0.01 \\
	$h_t = 1/256 ~(h_t\xi \in (0,96))$ & $\rho$ & >1 & 0.41 & 0.65 & 0.03 & 0.01 & 0.009 \\
	$h_t = 1/128 ~(h_t\xi \in (0,192))$ & $\rho$ & >1 & 0.40 & 0.70 & 0.02 & 0.01 & $0.006^*$ \\
	\hline
\end{tabular}
\caption{Results similar to those of Table \ref{tab:ex_esdirk33} for larger values of $h_t\xi$ and only FCF-relaxation; the $\ast$ superscript indicates that results may be inaccurate due to the exactness property of the MGRIT algorithm, i.\,e., convergence to the fine-grid solution in $N/2k$ iterations.
}
\label{tab:ex_esdirk33_even_odd}
\end{table}

Figure \ref{fig:trap} shows that MGRIT convergence is even more sensitive to small changes in $h_t\xi$ when the trapezoid
method is used. We demonstrate this in a similar setting as considered in Table \ref{tab:ex_esdirk33}. Again, we fix $h = 1/16$
and consider three time-step sizes, $h_t = 1/2048$, $h_t = 1/1024$, and $h_t = 1/512$. From the results in Table \ref{tab:ex_tr},
we see that a simple doubling of the time-step size can dramatically affect convergence, in the worst case changing from
convergence factors of $0.02$ to divergence (observed for factor-4 coarsening and $h_t = 1/1024$).

\begin{table}[h!t]
\renewcommand{\tabcolsep}{0.195cm}
\renewcommand{\arraystretch}{1.2}
\centering
\begin{tabular}{|>{\centering}m{.22\textwidth}|>{\centering}m{.03\textwidth}|>{\centering}m{.07\textwidth}|>{\centering}m{.07\textwidth}|>{\centering}m{.07\textwidth}|>{\centering\arraybackslash}m{.07\textwidth}|}
	\hline\multicolumn{2}{|r|}{$k = ~$} & 2 & 4 & 8 & 16\\ 
	\hline\hline
	$h_t = 1/2048 ~(h_t\xi \in (0,3))$ & $\rho$ & 0.01 & 0.02 & 0.02 & 0.02 \\
	$h_t = 1/1024 ~(h_t\xi \in (0,6))$ & $\rho$ & 0.82 & 0.02 & 0.02 & 0.02 \\
	$h_t = 1/512 ~(h_t\xi \in (0,12))$ & $\rho$ & >1 & >1 & 0.41 & 0.02 \\
	\hline
\end{tabular}
\caption{Results similar to those of Table \ref{tab:ex_esdirk33} for using the trapezoid method instead of ESDIRK-33 and only FCF-relaxation and even coarsening factors.\label{tab:ex_tr}}
\end{table}

Observing that two-level convergence heavily depends on the value of $h_t\xi$ raises the question of how full multilevel
convergence can be affected by this value. In Figure \ref{fig:tr_conv}, we consider multilevel MGRIT convergence for three
cases from Table \ref{tab:ex_tr} for which two-level convergence is fast, namely factor-2 and factor-4 coarsening for
$h_t = 1/2048$ and factor-4 coarsening for $h_t = 1/1024$. The plots show that although $h_t\xi$ gets larger with every
coarse grid level (i.\,e., $h_t\xi$ doubles for $k=2$ and quadruples for $k=4$), multilevel convergence is still good in most
cases. For both cases shown in the left plot, multilevel convergence is hardly affected, whereas convergence degrades from
about 0.02 to 0.4 when considering more than two grid levels in the case of $h_t = 1/1024$ and $k = 4$. This is somewhat
different than previously mentioned results (for example, Figure \ref{fig:mg_conv}), and indicates that further investigation
into the multilevel setting is needed. 

\begin{figure}[h!t]
	\centerline{
	\includegraphics[width=.53\textwidth]{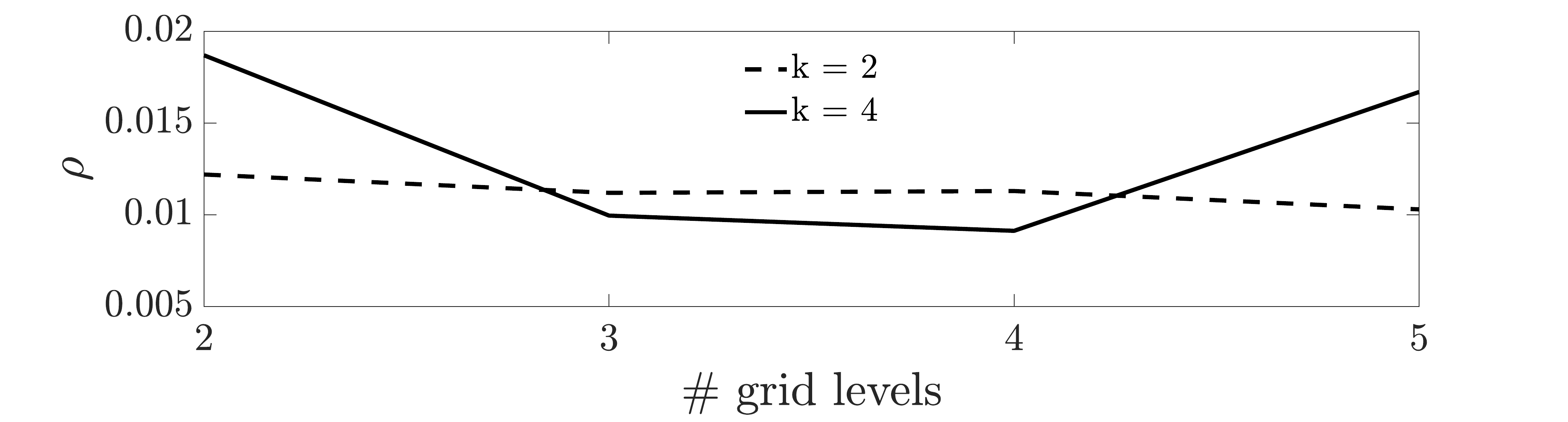}
	\includegraphics[width=.53\textwidth]{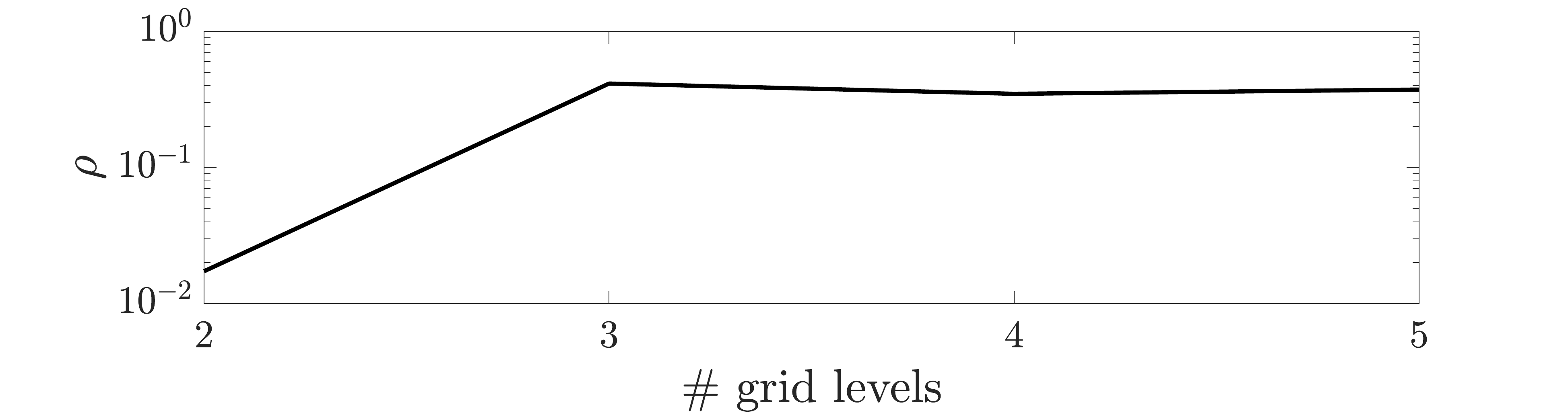}}
	\caption{Maximum convergence factors $\rho$ of MGRIT V-cycles with FCF-relaxation for trapezoid method in
	time and $Q1$ elements in space. At left, $h_t = 1/2048$ and at right, $h_t = 1/1024$. \label{fig:tr_conv}}
\end{figure}

\subsection{Combination of A- and L-stable time integration methods}

One particularly interesting result from Sections \ref{sec:irk:irk} and \ref{sec:irk:L} is that convergence of Parareal and two-level MGRIT
with ESDIRK-33 or with the trapezoid method can be improved or even ``fixed'' by using an L-stable scheme on the coarse grid.
Here, we consider this modification for ESDIRK-33. In particular, we test using the two L-stable time-stepping schemes backward
Euler and ESDIRK-32 on the coarse grid. Furthermore, to allow for comparison, we choose the same parameters as considered
in Table \ref{tab:ex_esdirk33}, that is, we fix $h=1/16$ and test with the time-step sizes $h_t=1/1024$ and $h_t=1/512$. Table
\ref{tab:ex_esdirk33_cg} shows results of these experiments for two-level MGRIT with F- and FCF-relaxation. We see that the
experimentally measured maximum convergence factors correspond well to the convergence bounds of Figure \ref{fig:esdirk}.
Comparing with the results in Table \ref{tab:ex_esdirk33}, Parareal/MGRIT with F-relaxation now converges in all tests. For FCF-relaxation,
while using backward Euler is not beneficial, ESDIRK-32 offers fast and more robust convergence. Moreover, experiments with
multilevel V-cycles (shown in Figure \ref{fig:esdirk33_cg_conv}) demonstrate that the strategy of using L-stable coarse-grid
time-integration methods can be applied successfully in a multilevel setting, especially when using FCF-relaxation.

\begin{table}[h!t]
\renewcommand{\tabcolsep}{0.195cm}
\renewcommand{\arraystretch}{1.2}
\centering
\begin{tabular}{|>{\centering}m{.1\textwidth}|>{\centering}m{.12\textwidth}|>{\centering}m{.03\textwidth}|>{\centering}m{.09\textwidth}|>{\centering}m{.09\textwidth}|>{\centering}m{.09\textwidth}|>{\centering}m{.09\textwidth}|>{\centering}m{.09\textwidth}|>{\centering\arraybackslash}m{.09\textwidth}|}
	\hline\multicolumn{3}{|r|}{$k = ~$} & 2 & 3 & 4 & 5 & 8 & 16 \\ 
	\hline\hline
	 & $h_t = 1/1024$ & $\rho$ & 0.31/0.10 & 0.29/0.11 & 0.30/0.11 & 0.30/0.10 & 0.29/0.10 & 0.28/0.11 \\
	ESDIRK-33 & $(h_t\xi \in (0,6))$ & & & & & & & \\
	+ BWE & $h_t = 1/512$ & $\rho$ & 0.31/0.11 & 0.29/0.09 & 0.29/0.10 & 0.30/0.10 & 0.28/0.11 & 0.29/0.09 \\
	& $(h_t\xi \in (0,12))$ & & & & & & & \\
	\hline\hline
	ESDIRK-33 & $h_t = 1/1024$ & $\rho$ & 0.24/0.006 & 0.24/0.007 & 0.24/0.01 & 0.24/0.01 & 0.24/0.009 & 0.24/0.01 \\
	+ & $(h_t\xi \in (0,6))$ & & & & & & & \\
	ESDIRK-32 & $h_t = 1/512$ & $\rho$ & 0.38/0.04 & 0.25/0.009 & 0.24/0.009 & 0.25/0.007 & 0.24/0.01 & 0.23/0.01 \\
	& $(h_t\xi \in (0,12))$ & & & & & & & \\
	\hline
\end{tabular}
\caption{Results similar to those of Table \ref{tab:ex_esdirk33} for using ESDIRK-33 only on the fine grid and BWE or ESDIRK-32 on the coarse grid.}
\label{tab:ex_esdirk33_cg}
\end{table}

\begin{figure}[h!t]
	\centerline
	{\includegraphics[width=.53\textwidth]{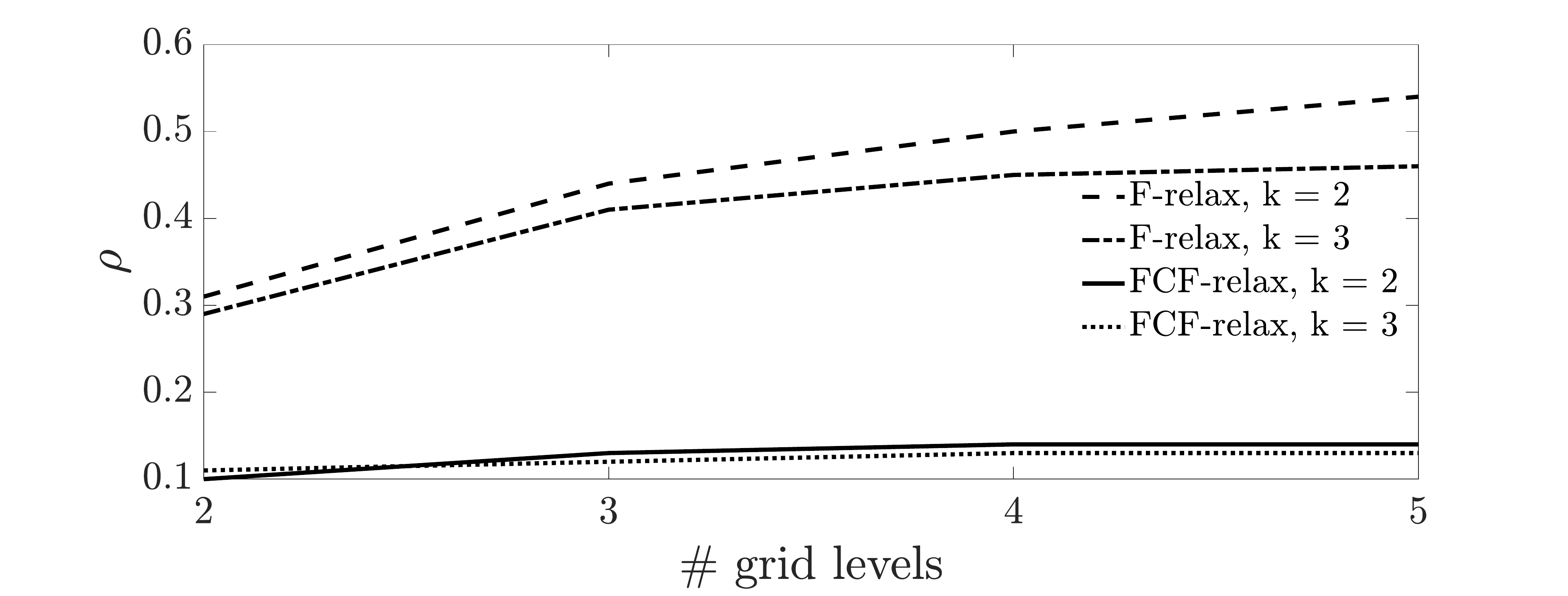}
	\includegraphics[width=.53\textwidth]{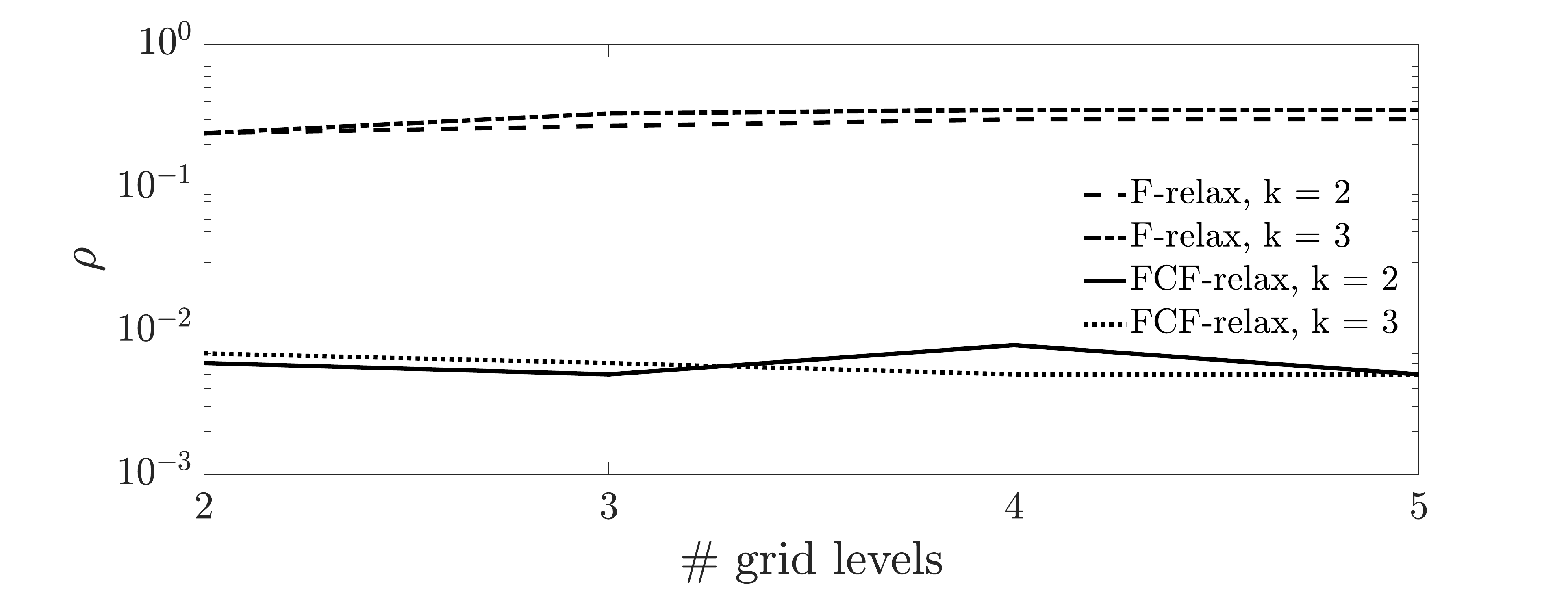}}
	\caption{Maximum convergence factors $\rho$ of MGRIT V-cycles with ESDIRK-33 on the fine grid and backward Euler
	(left) or ESDIRK-32 (right) on the coarse grid; $Q1$ elements with $h = 1/16$ in space, time-step size $h_t = 1/1024$.
	\label{fig:esdirk33_cg_conv}}
\end{figure}

\begin{remark}
	The numerical experiments in this section were carried out on grids with relatively large spatial mesh sizes. Note, however,
	that Parareal/MGRIT convergence only depends on the value of $h_t\xi$, or, equivalently, on a function of the mesh sizes in space
	and time. For the diffusion problem, this function only depends on the ratio $h_t/h^2$. Results on finer grids show the same
	behavior, but require higher computational costs. Scaling tests in $h_t$ and $h_x$ can be found, for example, in Falgout et al.\cite{RDFalgout_etal_2014}
\end{remark}

\section{Skew-symmetric operators}\label{sec:imag}

{

So far we have only considered SPD spatial operators. The primary reason for this is that SPD operators are
unitarily diagonalizable, in which case the theory in \cite{BSSouthworth_2018a} provides tight upper and lower
bounds on convergence of the error and residual of two-level MGRIT and Parareal in the $\ell^2$-norm. This yields
necessary and sufficient conditions, where we can prove that an RK scheme either does or does not observe
convergence independent of spatial and temporal problem size, in two of the standard measures of convergence
for iterative methods. 

In general, an operator is unitarily diagonalizable if and only if it is normal. Limiting ourselves to real-valued
operators, the other general subclass of normal matrices (in addition to symmetric matrices) are skew-symmetric
matrices, $A = -A^T$. Skew symmetric matrices have purely imaginary eigenvalues and can arise in the
discretization of hyperbolic PDEs (for example, a central finite difference scheme for advection is skew symmetric,
or the second-order wave equation in first-order form is almost skew symmetric and has purely imaginary
eigenvalues \cite{optimal20}). This section considers convergence of Parareal/MGRIT applied to a spatial operator with purely
imaginary eigenvalues, and using various implicit and explicit Runge-Kutta schemes. Convergence bounds
for Parareal/MGRIT with F-relaxation are shown in Figure \ref{fig:evalsimag} as a function of $h_t\xi$, 
where $\xi$ is purely imaginary. There are a few interesting things to note:
\begin{enumerate}
\item Backward Euler appears to be the only Runge-Kutta scheme for which one can obtain ``optimal''
convergence independent of $N_c$ for skew-symmetric operators.
All other Runge-Kutta schemes we have tested, including those shown
in Figure \ref{fig:evalsimag} as well as others not published here, stop converging for $h_t\xi \lessapprox 1$
as $N_c\to\infty$. We hypothesize there is a rigorous explanation for this based on looking at Runge-Kutta
schemes as rational approximations to the exponential, but we have been unable to identify a proof. Nevertheless,
this again highlights the usefulness of this work: it is straightforward to evaluate and compare tight convergence
bounds for arbitrary RK schemes, without thorough analyses for each. 

\item The CFL stability limit on the coarse temporal grid approximately corresponds to where the coarse-grid
eigenvalue $|\mu|$ (solid black line) increases above one for explicit schemes (left column of plots). This
occurs between $h_t\xi \in [0.1,0.5]$ depending on the scheme. Convergence of MGRIT,
regardless of implicit/explicit or $N_c$, requires \textit{at least} CFL ``stability'' on the coarse grid
for convergence and typically a stronger stability $h_t\xi \ll 0.5$. The benefit of implicit is that the
underlying RK scheme is stable on arbitrarily coarse grids, which has allowed for some success with
multilevel MGRIT \cite{mgrit19}. Multilevel is generally not feasible using explicit schemes anywhere 
close to the CFL limit as the residual will explode when the CLF limit is violated on some grid. 

\item Plots for FCF-relaxation are nearly identical for all schemes shown. Recall all RK schemes are
a rational approximation to the exponential, and the exponential evaluated at imaginary values always
has magnitude one. Thus for any reasonable approximation, the eigenvalues of $\Phi$, particularly
anywhere moderately close to the origin, satisfy $|\lambda_i|\approx 1$, and the additional constant
in convergence bounds provided by FCF-relaxation, $|\lambda_i|^k$, is of minimal benefit.

\item Even backward Euler is limited by coarsening factor, $k$. Figure \ref{fig:evalsimag} uses
$k=4$, and backward Euler observes worst-case convergence of $\varphi_F = \varphi_{FCF} = 0.75$,
independent of $N_c$. However, this bound increases to $\varphi_F = \varphi_{FCF} = 0.875, 0.9375,$
and $0.96875$ for $k=8,16$ and $32$, respectively (not shown in plots), while the Parareal/MGRIT
stability limit on $h_t\xi$ to observe convergence steadily decreases.

\end{enumerate}

\begin{figure}[!ht]
    \centering
    \begin{center}
        \begin{subfigure}[c]{0.285\textwidth}
            \includegraphics[width=\textwidth]{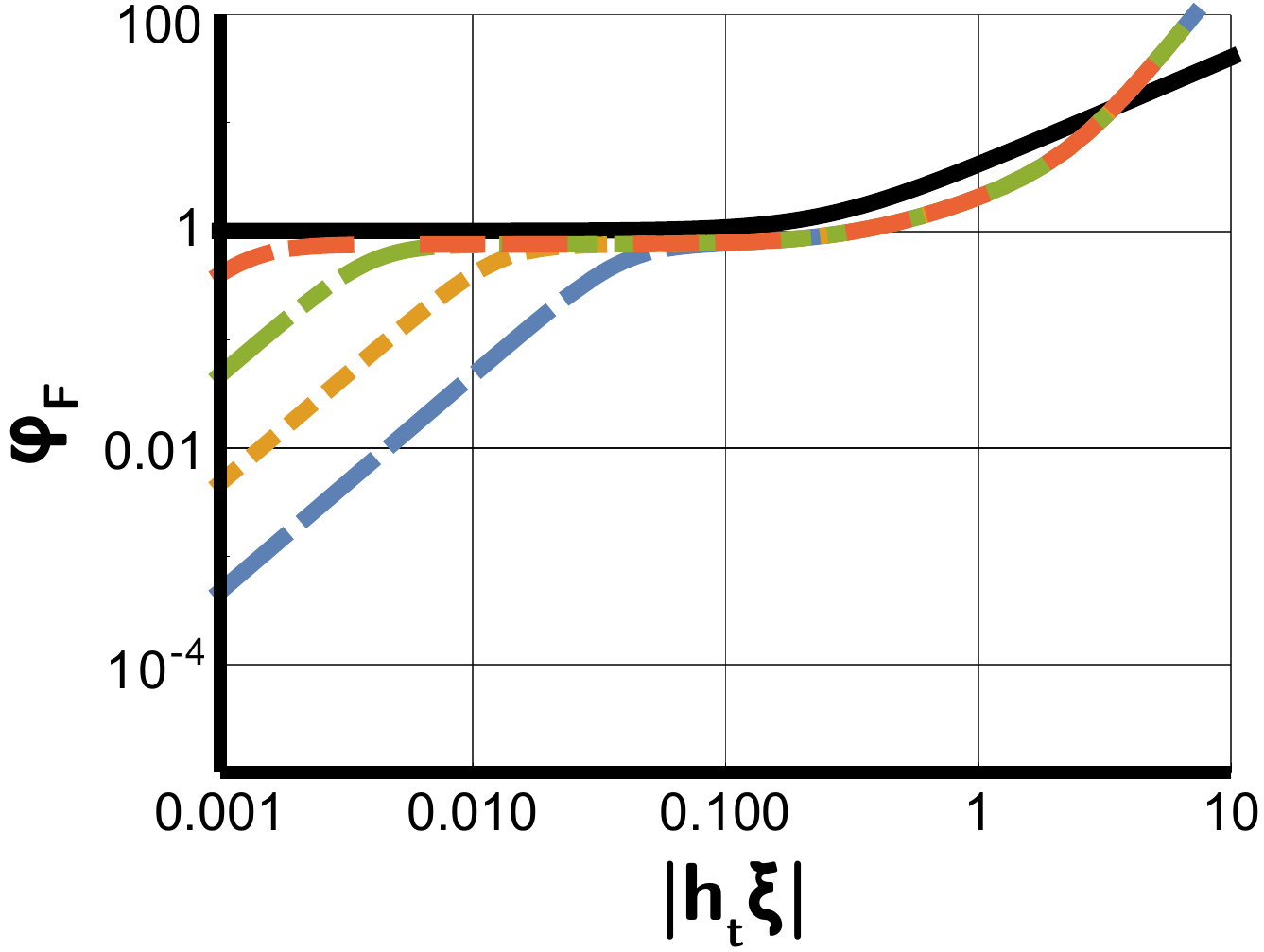}
            \caption{Forward Euler}
        \end{subfigure}
        \begin{subfigure}[c]{0.285\textwidth}
            \includegraphics[width=\textwidth]{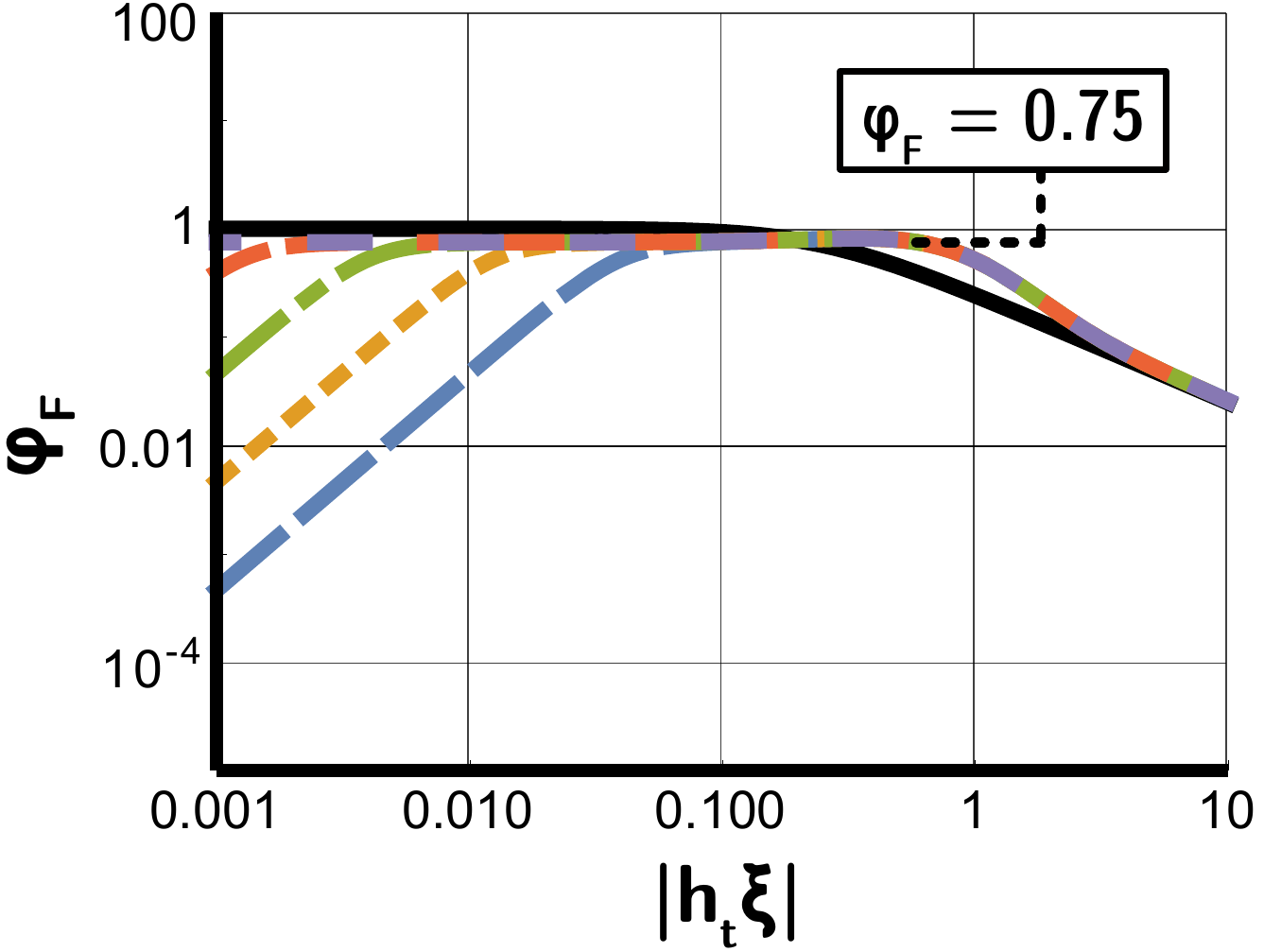}
            \caption{Backward Euler (L)}
        \end{subfigure}
        \hspace{7ex}
        \begin{subfigure}[c]{0.18\textwidth}
            \includegraphics[width=\textwidth]{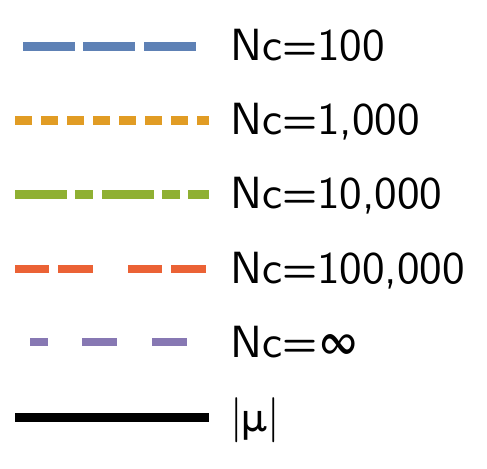}
            \vspace{2ex}
        \end{subfigure}
\\
        \begin{subfigure}[c]{0.285\textwidth}
            \includegraphics[width=\textwidth]{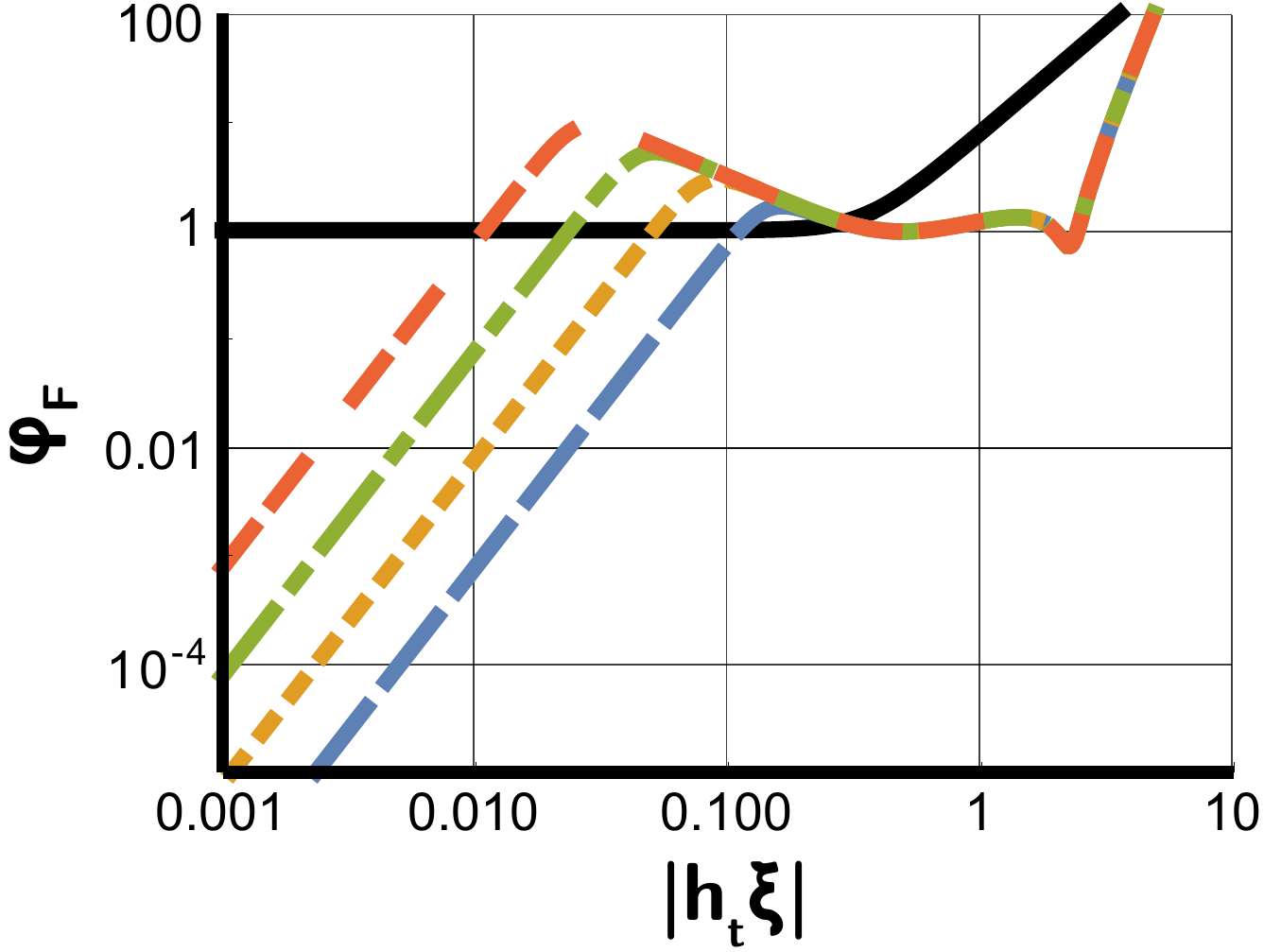}
            \caption{ERK2}
        \end{subfigure}
        \begin{subfigure}[c]{0.285\textwidth}
            \includegraphics[width=\textwidth]{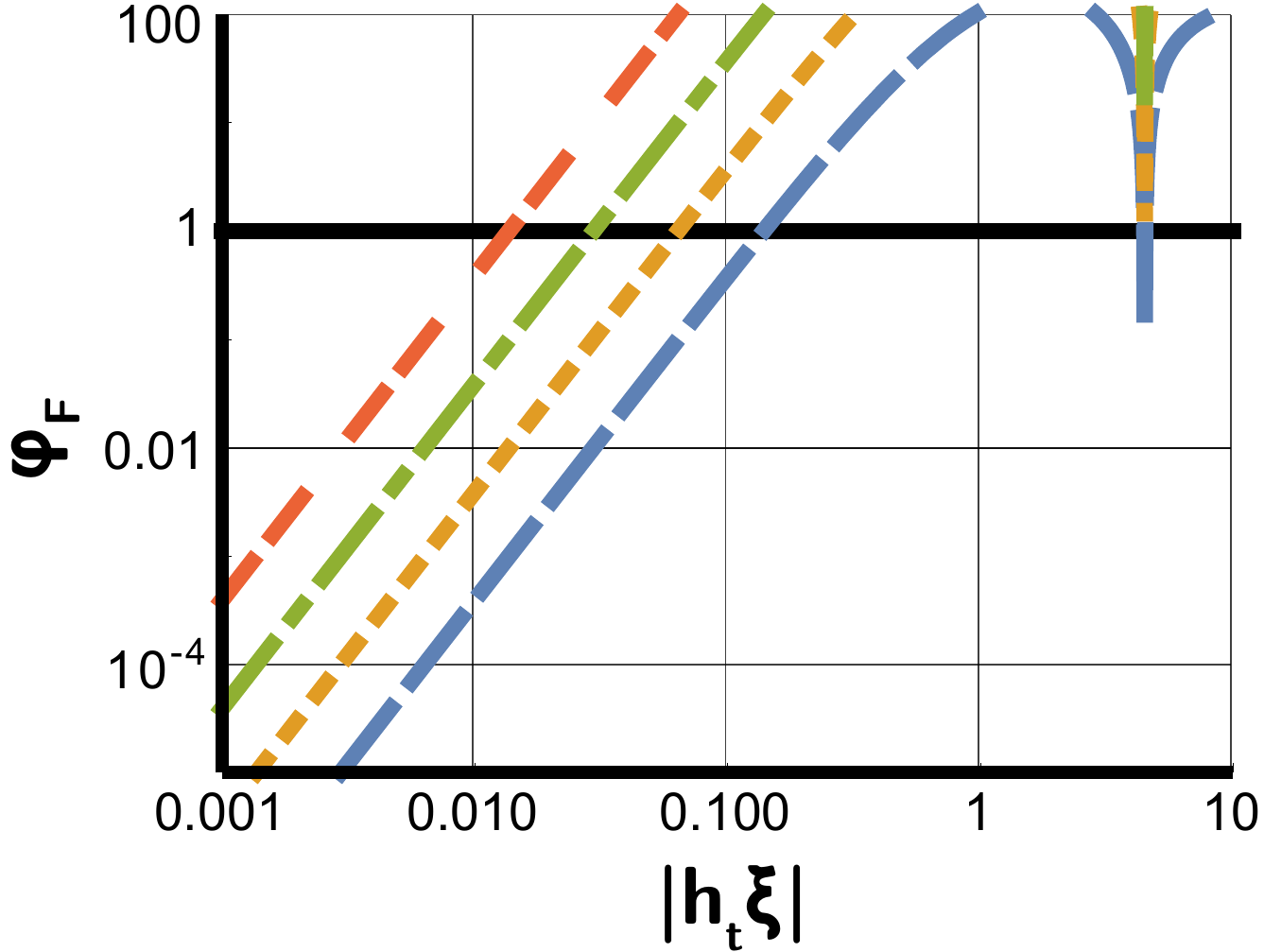}
            \caption{Trapezoid rule (A)}
        \end{subfigure}
        \begin{subfigure}[c]{0.275\textwidth}
            \includegraphics[width=\textwidth]{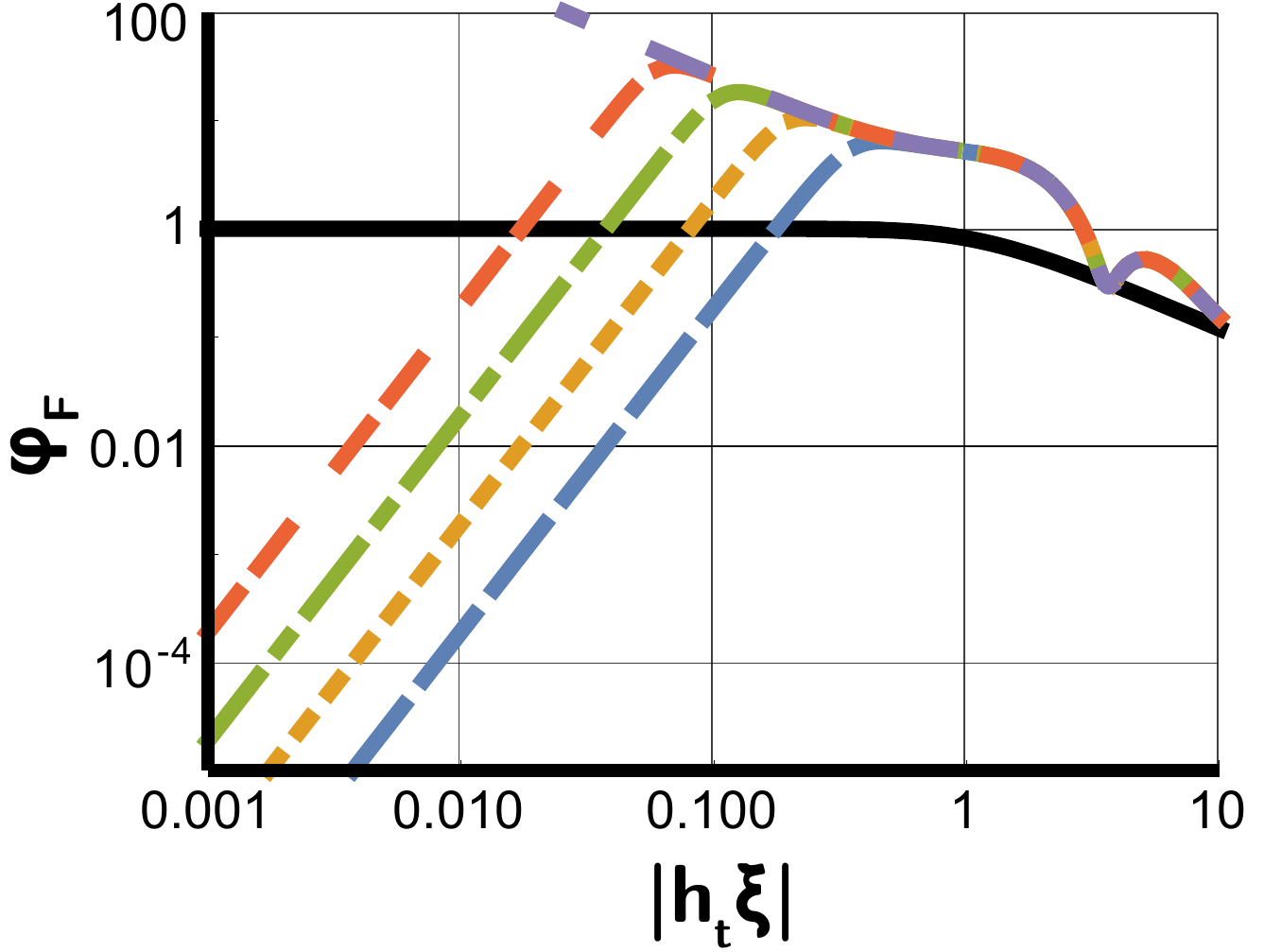}
            \caption{SDIRK2 (L)}
        \end{subfigure}
\\
        \begin{subfigure}[c]{0.285\textwidth}
            \includegraphics[width=\textwidth]{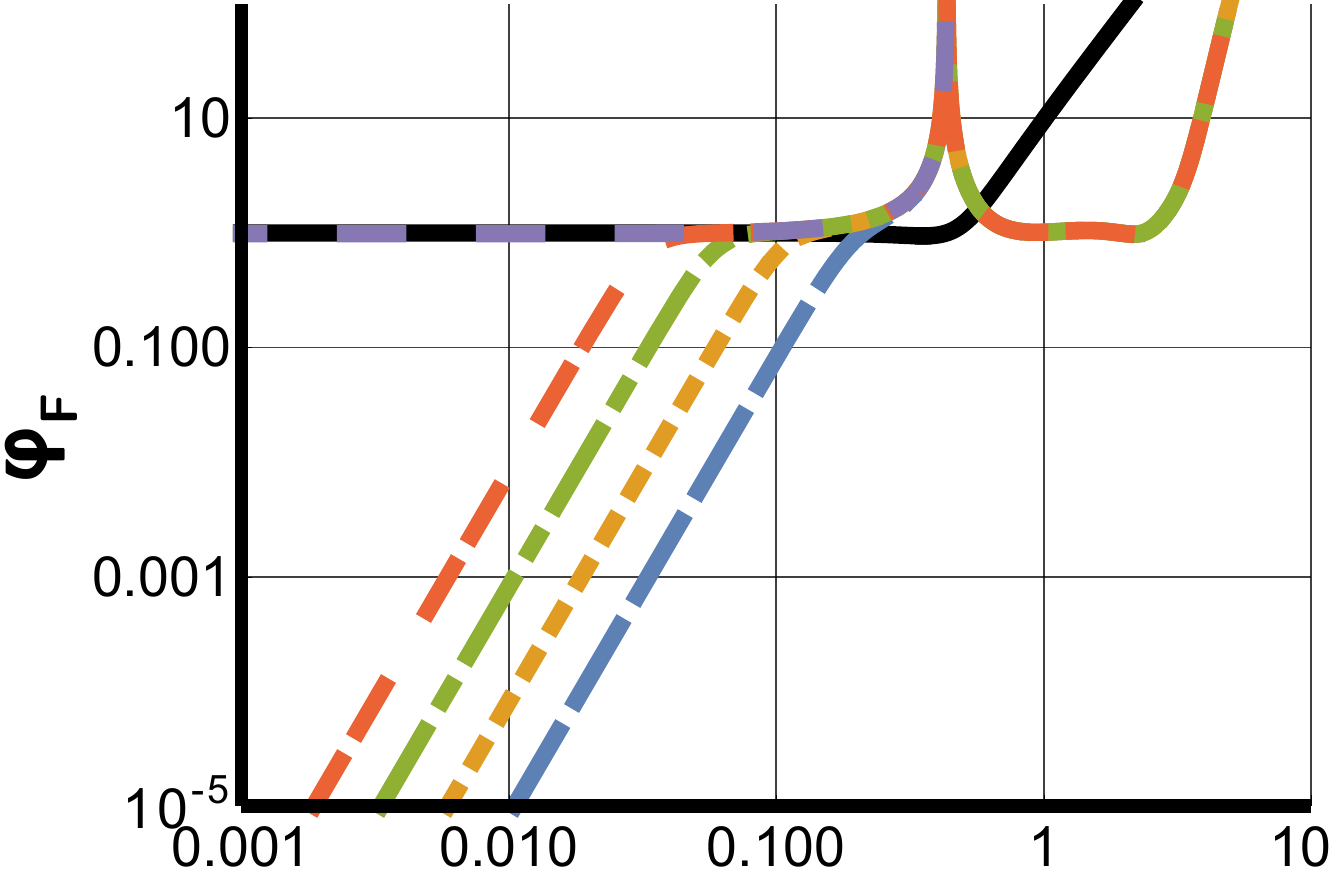}
            \caption{ERK3}
        \end{subfigure}
        \begin{subfigure}[c]{0.285\textwidth}
            \includegraphics[width=\textwidth]{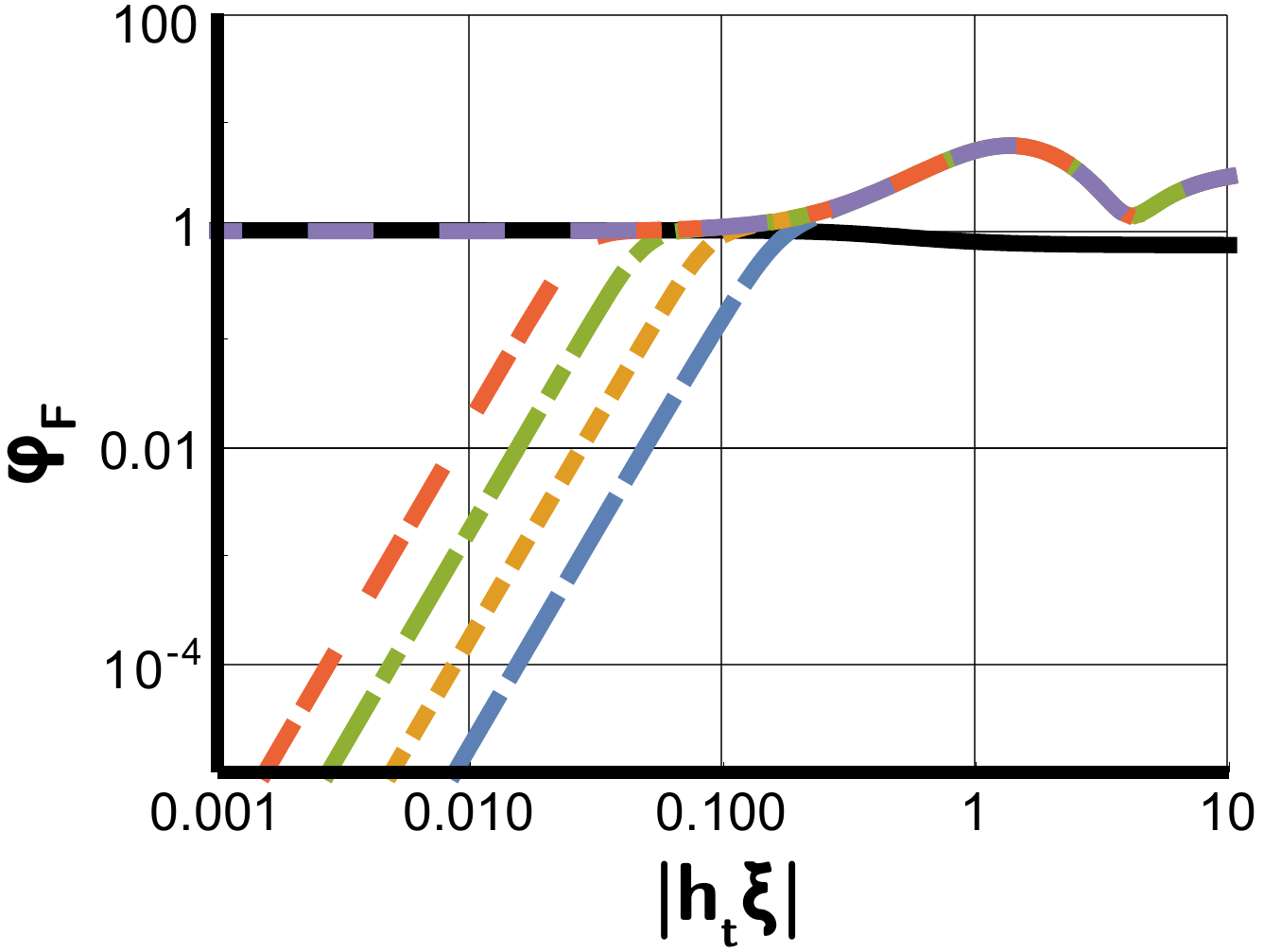}
            \caption{SDIRK3 (A)}
        \end{subfigure}
        \begin{subfigure}[c]{0.285\textwidth}
            \includegraphics[width=\textwidth]{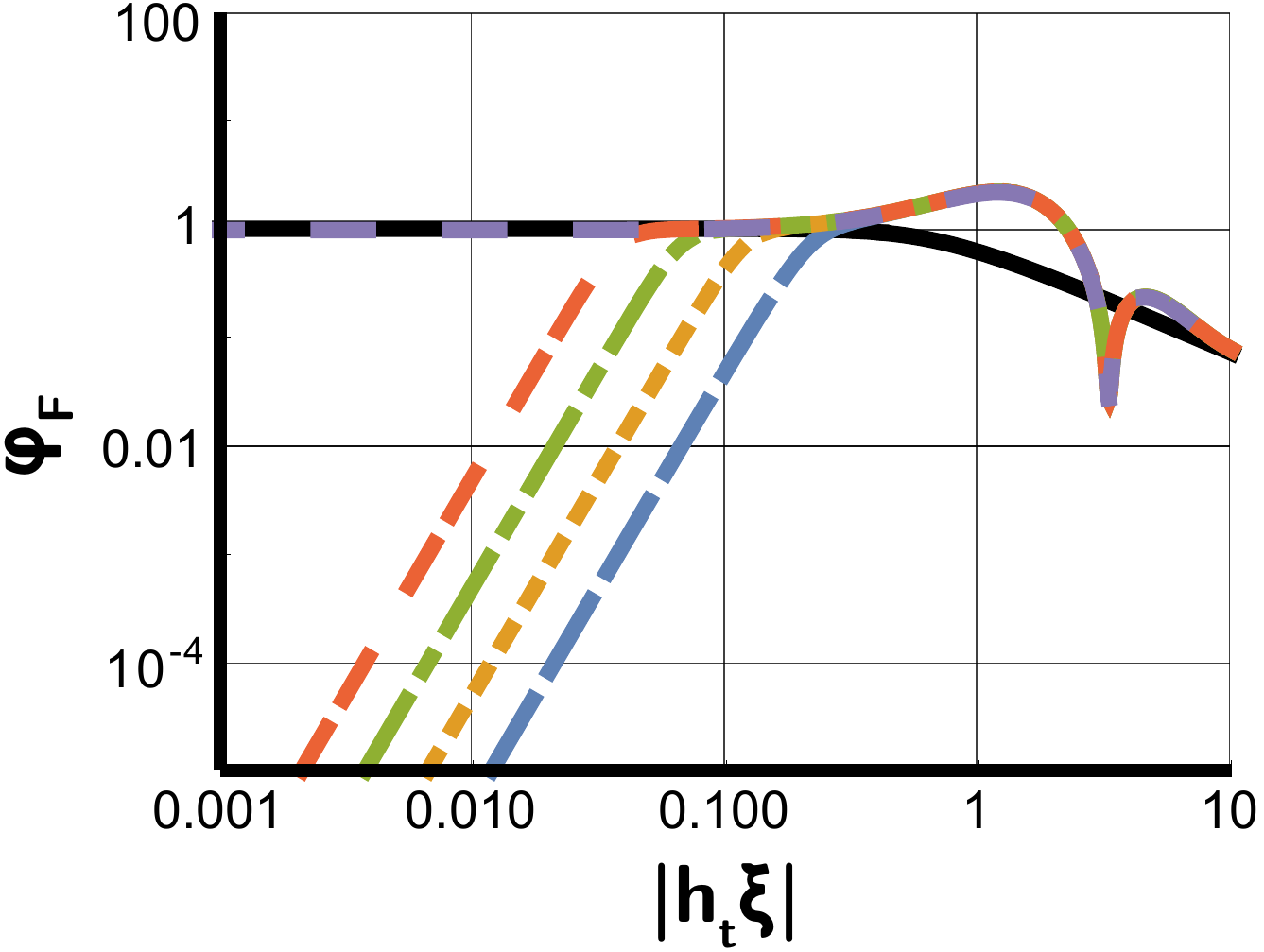}
            \caption{SDIRK3 (L)}
        \end{subfigure}
    \end{center}
    \caption{Convergence bounds of two-level MRGIT/Parareal with F-relaxation and coarsening
    factor $k=4$, as a function of $h_t|\xi|$, for purely imaginary spatial eigenvalue $\xi$. Results
    are shown for various numbers of coarse-grid time steps, with $N_c=\infty$ denoting the upper
    bound on convergence with no dependence on $N_c$.}
    \label{fig:evalsimag}
\end{figure}

It is worth briefly commenting on the distinction between skew-symmetric and SPD operators and
the corresponding convergence estimates. Recall when considering SPD operators and real
eigenvalues, we ignored the $O(N_c)$ term in the eigenvalue convergence bounds. 
Indeed, it is easily verified (via, for example, plotting the functions) that for real eigenvalues,
convergence bounds are virtually identical for $N_c = 100$ or $N_c = \infty$ for any of the
schemes considered in this paper. The key to this was proven in Corollary \ref{cor:limits}, where
it is shown that $\varphi_F$ and $\varphi_{FCF}$ \textit{continuously} limit to zero as $h_t\xi\to0$. 
Furthermore, for real-valued spatial eigenvalues, $|\mu_i|$ is only close to one when $h_t\xi$ is
close to zero, and $|\mu_i|\approx 1$ is the only situation when the $\mathcal{O}(N_c)$ term
in \eqref{eq:v_bounds} and \eqref{eq:v_bounds2} matters.

Complex eigenvalues violate both
of these properties. As mentioned in Remark \ref{rem:complex}, the continuous limit is not
well-defined in the complex plane, so even though convergence may be perfect (zero) at
$h_t\xi = 0$, convergence arbitrarily close to $h_t\xi = 0$ may be poor. Moreover, for any RK scheme,
eigenvalues of $\Phi$ and $\Psi$ will have magnitude close to one for $h_t\xi$ along a significant
portion of the imaginary axis (approximating the exponential), and the $\mathcal{O}(N_c)$ term
in convergence bounds is significant for all such eigenvalues. Thus, for purely imaginary (and
generally complex) eigenvalues, problem size matters. In the more general complex setting,
it is even more complicated to consider convergence independent of problem size because the
structure of spatial eigenvalues in the complex plane can change as $h$ changes, in addition
to the fact that convergence bounds are generally more complicated (see the following remark). 

\begin{remark}
This work has only considered convergence of Parareal/MGRIT applied to SPD or skew-symmetric
operators so that optimal convergence can be proven for error and residual propagation
in the $\ell^2$-norm. The theoretical results in \cite{BSSouthworth_2018a} are more general and
can be useful for the analysis of non-normal operators, however, proving optimal convergence in the
more general setting is more complicated. In particular, similar eigenvalue convergence bounds hold in a
$(UU^*)^{-1}$-norm, for spatial eigenvectors $U$. However, as the
spatial mesh is refined $UU^*$ is likely to become increasingly ill-conditioned, which (i) changes
the norm in which we are considering convergence, and (ii) may be decreasingly representative of
observed convergence in an $\ell^2$-sense. Analysis of convergence for non-normal operators and
specific RK schemes has been done in, for example, \cite{Wu:2017ih}. A similar scenario holds
for those results -- convergence is considered on an eigenvector-by-eigenvector basis, but if the
eigenvectors do not form an orthogonal basis for the space, the theory does not yield tight bounds
on convergence of arbitrary error modes. That is not to say such analyses are not useful, rather that
the focus of this paper is on rigorous analysis of convergence independent of problem size, and
eigenvalue bounds do not necessarily provide this for non-normal operators. Note, more general
theory that is not based on eigenvectors was also developed in \cite{BSSouthworth_2018a}, but
requires independent analysis for each problem, so is not considered here. 
\end{remark}

}

\section{Conclusions}\label{sec:conclusions}

This paper provides a theoretical framework for analyzing the convergence of two-level MGRIT and Parareal algorithms
for linear problems of the form $\mathbf{u}'(t) + \mathcal{L}\mathbf{u}(t) = f(t)$, where $\mathcal{L}$ is SPD \revision{or skew symmetric}
and Runge-Kutta time integration
is used. Tight convergence bounds are provided in terms of the product $h_t\xi$, where $h_t$ denotes the time-step size and $\xi$ are the
spatial eigenvalues of the operator $\mathcal{L}$. Several important observations come from the theory presented in this paper \revision{for SPD
operators}. First, two-level
MGRIT and Parareal using arbitrary Runge-Kutta schemes on the fine and coarse grid is guaranteed to converge rapidly for small values of
$h_t\xi$. Secondly, using an L-stable coarse-grid operator with an A-stable or L-stable fine-grid operator cannot result in divergence for large
values of $h_t\xi$. Third, the a priori analysis can motivate algorithmic choices such as the choice of the coarsening factor, relaxation scheme,
or coarse-grid integration method, that make the difference between convergence and divergence, or yield speedups of several times. A final
benefit is the versatility of the presented theory that allows for straightforward convergence analysis of arbitrary coarse- and fine-grid operators,
coarsening factors, and time-step sizes, and also encompasses analysis of various modified Parareal algorithms, such as the $\theta$-Parareal
method\cite{GAriel_etal_2018a} and modified A-/L-stable fine-grid propagators introduced in \cite{SLWu_2016}. For explicit schemes, it is shown
that using the same method on the coarse grid (with time-step size $kh_t$) as on the fine grid (with time step $h_t$) is optimal in a Taylor sense.
Thus, the analysis can be tractable, but developing effective parallel-in-time algorithms for explicit methods is generally more difficult than for
implicit methods. \revision{An interesting insight of the presented theory for the $\theta$-Parareal scheme is that applying two iterations of Parareal/
two-level  MGRIT with F-relaxation, first with weight $\theta=1$ followed by $\theta=0$ is exactly equivalent to one iteration of MGRIT with 
FCF-relaxation. For skew-symmetric operators, we show that backward Euler appears to be the only RK scheme that can provide
optimal convergence (proving many other standard schemes do not), but even there the convergence depends heavily on the coarsening factor.}

Future work includes extending the theory to the multilevel setting. The multilevel results presented in Section \ref{sec:ex}
are promising, where we observe fast and robust convergence in settings of fast and robust two-level convergence, and some theory has
been developed in \cite{mgrit19}, but a formal connection between two-level and full multilevel convergence is not yet understood. 
A Mathematica notebook that performs two-grid analysis for all Runge-Kutta schemes tested here, and to which
other schemes can easily be added, can be found in the \textit{a\_priori/} subfolder of the Github repository
\url{https://github.com/XBraid/xbraid-convergence-est}.

\appendix

\begin{lemma}[The trapezoid rule/Crank-Nicolson]\label{lem:crank}
Parareal/MGRIT with F-relaxation using the trapezoid rule/Crank-Nicolson on the fine grid and backward Euler
on the coarse grid can observe arbitrarily slow convergence for large $h_t\xi$. 
\end{lemma}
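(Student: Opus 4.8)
The plan is to substitute the relevant Runge-Kutta stability functions into the tight F-relaxation bound $\varphi_F = \sup_{h_t\xi} |\mu - \lambda^k|/(1-|\mu|)$ of \eqref{eq:varphi} and track its behaviour as $h_t\xi\to\infty$. Writing $x:=h_t\xi$, the trapezoid rule/Crank-Nicolson on the fine grid has stability function $R_0(z)=(1+z/2)/(1-z/2)$, so by $\Phi=R_0(-h_t\mathcal{L})$ its eigenvalue is $\lambda(x)=(2-x)/(2+x)$, while backward Euler on the coarse grid (time step $kh_t$) has $\mu(x)=1/(1+kx)$. First I would isolate the two limiting behaviours responsible for the result: backward Euler is L-stable, so $\mu(x)\to 0$, whereas the trapezoid rule is only A-stable, so $|\lambda(x)|\to 1$ with $\lambda(x)\to -1$, whence $\lambda(x)^k\to(-1)^k$.

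For concreteness I would carry out the case $k=2$ referenced in the main text, where $1-|\mu(x)|=2x/(1+2x)$ and $\lambda(x)^2=((2-x)/(2+x))^2$, so that for this single eigenmode
\begin{align*}
\frac{|\mu(x)-\lambda(x)^2|}{1-|\mu(x)|} = \frac{\left|\frac{1}{1+2x}-\left(\frac{2-x}{2+x}\right)^2\right|}{\frac{2x}{1+2x}},
\end{align*}
which tends to $|0-1|/1 = 1$ as $x\to\infty$. The identical computation for general $k$ gives numerator $|\mu-\lambda^k|\to|0-(-1)^k|=1$ and denominator $1-|\mu|\to1$, so $\lim_{h_t\xi\to\infty}\varphi_F=1$ for every coarsening factor, matching the entry $\varphi_F=1$ of Table \ref{tab:comp}. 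In particular, for any $\epsilon>0$ there is a finite $h_t\xi$ at which this eigenmode's convergence factor exceeds $1-\epsilon$.

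The step I expect to be the crux is arguing that the limiting value $1$ reflects genuinely slow convergence rather than a merely pessimistic upper bound. This is where the two-sided tightness of Theorem \ref{th:diag_tight} is indispensable: since $\mu(x)\to 0$ keeps $|\mu|$ bounded away from $1$, the lower bound in \eqref{eq:v_bounds} has denominator $\sqrt{(1-|\mu|)^2+\pi^2|\mu|/N_c^2}\to 1$ independently of $N_c$, and with numerator $\to 1$ it forces $\|\mathcal{E}_F\|$ itself to approach $1$ as $h_t\xi\to\infty$, certifying that error in these modes is damped arbitrarily slowly. Finally, I would emphasise the structural reason, contrasting with Proposition \ref{prop:L}: the failure is caused precisely by the fine-grid trapezoid rule being A-stable but not L-stable, since $|\lambda|\to 1$ prevents the numerator from vanishing; had the fine-grid scheme been L-stable, $|\lambda^k|\to 0$ would instead drive $\varphi_F\to 0$.
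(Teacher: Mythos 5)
Your proposal is correct and follows essentially the same route as the paper's proof: plug the Crank-Nicolson eigenvalue $\lambda = (2-h_t\xi)/(2+h_t\xi)$ and backward-Euler eigenvalue $\mu = 1/(1+kh_t\xi)$ into the tight F-relaxation bound, show the ratio tends to $1$ as $h_t\xi\to\infty$, and invoke the lower bound of Theorem \ref{th:diag_tight} (whose $\pi^2|\mu|/N_c^2$ term vanishes since $\mu\to 0$) to certify that this is genuine slow convergence rather than a loose upper bound. The only differences are cosmetic: the paper carries out the algebra explicitly for $k=2$ with the $C\in\{1,6\}$ constants, while you treat general $k$ and add the (correct) structural remark that the failure stems from the trapezoid rule being A- but not L-stable.
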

\begin{proof}
In \cite{TPMathew_etal_2010}, $\theta$-integration is considered on the fine grid, which takes the form
\begin{align*}
(M+\theta h_t\mathcal{L})\mathbf{u}_{i+1} & = (M - (1-\theta) h_t\mathcal{L})\mathbf{u}_i + \mathbf{f}, 
\end{align*}
for mass matrix $M$, spatial operator $\mathcal{L}$, and forcing function $\mathbf{f}$. Here we
focus on the case of $\theta = 1/2$. Rearranging, this corresponds to fine-grid time-integration
scheme $\hat\Phi := (M+ \tfrac{h_t}{2}\mathcal{L})^{-1}(M - \tfrac{h_t}{2}\mathcal{L})$. For a linear problem,
the implicit mid-point and two-stage RK trapezoidal method are equivalent and also referred
to as Crank-Nicolson. From \eqref{eq:phi} and Appendix \ref{app:sdirk}, the implicit midpoint
method yields $\Phi := I - h_t(I + \tfrac{h_t}{2}\mathcal{L})^{-1}\mathcal{L} 
= (I + \tfrac{h_t}{2}\mathcal{L})^{-1}(I - \tfrac{h_t}{2}\mathcal{L})^{-1}$. Letting $\mathcal{L} \mapsto M^{-1}\mathcal{L}^{-1}$,
we have $\Phi = \hat\Phi$. 

For simplicity, let us consider $M = I$ (for example, as in finite-difference discretizations). Then,
let $\Phi$ be given by Crank-Nicolson as above, $\Psi$ by backward Euler, and let $k=2$. Eigenvalues
$\lambda(h_t\xi)$ and $\mu(h_t\xi)$ are given by the stability functions,
\begin{align*}
\lambda(h_t\xi)^2 &: = \left(1 - \frac{h_t\xi}{1 + h_t\xi/2}\right)^2 = \frac{(h_t\xi-2)^2}{(h_t\xi+2)^2}, \\
\mu_2(h_t\xi) &: = 1 - \frac{2h_t\xi}{1+2h_t\xi} = \frac{1}{1+2h_t\xi}. 
\end{align*}
Convergence bounds for Parareal/MGRIT with F-relaxation in Theorem \ref{th:diag_tight}
are then straightforward to evaluate, 
\begin{align*}
\varphi_F & = \frac{ \left|\frac{(h_t\xi-2)^2}{(h_t\xi+2)^2} - \frac{1}{1+2h_t\xi}\right| }{ \sqrt{ \left(1 - 
 	\left|\frac{1}{1+2h_t\xi}\right|\right)^2 + \frac{\pi^2}{CN_c^2}\left|\frac{1}{1+2h_t\xi}\right|}}
= \frac{ \left|\frac{2(h_t\xi)^3  - 8(h_t\xi)^2}{2(h_t\xi)^3 + 9(h_t\xi)^2 + 12(h_t\xi) + 4}\right| }{ \sqrt{ \left(1 - 
 	\left|\frac{1}{1+2h_t\xi}\right|\right)^2 + \frac{\pi^2}{CN_c^2}\left|\frac{1}{1+2h_t\xi}\right|}}.
\end{align*}
Note that for (arbitrary) fixed $N_c$ and $C$, $\lim_{(h_t\xi)\to\infty} \varphi_F = 1$. From \cite{BSSouthworth_2018a},
$C = 1$ provides a lower bound on convergence and $C = 6$ provides an upper bound on convergence.
It follows that for any normal spatial operator and $\epsilon > 0$, there always exists a time-step $h_t$
such that Parareal/MGRIT with F-relaxation, using Crank-Nicolson on the fine grid and backward Euler
on the coarse grid, has an eigenvalue $1 - \epsilon$. 
\end{proof}

For completeness and to make this paper self-contained, in the following, we list the Butcher tableaus of the Runge-Kutta methods considered in this paper.

\section{Implicit Runge-Kutta Methods}
\subsection{Singly Diagonally Implicit Runge-Kutta (SDIRK) Methods}\label{app:sdirk}

\begin{center}
\begin{table*}[h!t]\caption{One-stage SDIRK methods\label{tab:1stage}}
\centering
\begin{tabular*}{350pt}{@{\extracolsep\fill}cc@{\extracolsep\fill}}
	\textbf{Backward Euler (order 1, L-stable)} & \textbf{Implicit midpoint rule (order 2, A-stable)} \\
	\begin{tikzpicture}
	\node (0,0) {$\renewcommand\arraystretch{1.8}\arraycolsep=8pt
	\begin{array}
	{c|c}
	1 & 1\\
	\cline{1-2}
	& 1
	\end{array}$};
	\end{tikzpicture} & \begin{tikzpicture}
	\node (0,0) {$\renewcommand\arraystretch{1.8}\arraycolsep=8pt
	\begin{array}
	{c|c}
	\tfrac{1}{2} & \tfrac{1}{2}\\
	\cline{1-2}
	& 1
	\end{array}$};
	\end{tikzpicture}
\end{tabular*}
\end{table*}
\end{center}

\begin{center}
\begin{table*}[h!t]\caption{Two-stage SDIRK methods (taken from Equations (221) and (223) of Kennedy and Carpenter\cite{CAKennedy_MACarpenter_2016})\label{tab:2stage}}
\centering
\begin{tabular*}{350pt}{@{\extracolsep\fill}cc@{\extracolsep\fill}}
	\textbf{ SDIRK22 (order 2, L-stable)} & \textbf{ SDIRK23 (order 3, A-stable)} \\
	\begin{tikzpicture}
	\node (0,0) {$\renewcommand\arraystretch{1.8}\arraycolsep=8pt
	\begin{array}
	{c|cc}
	\gamma & \gamma & 0\\
	1 & 1-\gamma & \gamma\\
	\cline{1-3}
	& 1-\gamma & \gamma
	\end{array}$};
	\end{tikzpicture} & \begin{tikzpicture}
	\node (0,0) {$\renewcommand\arraystretch{1.8}\arraycolsep=8pt
	\begin{array}
	{c|cc}
	\gamma & \gamma & 0\\
	1-\gamma & 1-2\gamma & \gamma \\ 
	\hline
	& \tfrac{1}{2} & \tfrac{1}{2}
	\end{array}$};
	\end{tikzpicture}\\
	$\hspace*{-3em}\gamma = \tfrac{2-\sqrt{2}}{2}$ & $\hspace*{-5em}\gamma = \tfrac{3+\sqrt{3}}{6}$
\end{tabular*}
\end{table*}
\end{center}

\begin{center}
\begin{table*}[h!t]\caption{Three-stage SDIRK methods (taken from Equations (229) and (232) of Kennedy and Carpenter\cite{CAKennedy_MACarpenter_2016})\label{tab:3stage}}
\centering
\begin{tabular*}{350pt}{@{\extracolsep\fill}cc@{\extracolsep\fill}}
	\textbf{ SDIRK33 (order 3, L-stable)} & \textbf{ SDIRK34 (order 4, A-stable)} \\
	\begin{tikzpicture}
	\node (0,0) {$\renewcommand\arraystretch{1.8}\arraycolsep=8pt
	\begin{array}
	{c|ccc}
	\gamma & \gamma & 0 & 0\\
	c & c-\gamma & \gamma & 0\\
	1 & b & 1-b-\gamma & \gamma\\
	\cline{1-4}
	& b & 1-b-\gamma & \gamma
	\end{array}$};
	\end{tikzpicture} & \begin{tikzpicture}
	\node (0,0) {$\renewcommand\arraystretch{1.8}\arraycolsep=8pt
	\begin{array}
	{c|ccc}
	\gamma & \gamma & 0 & 0\\
	\tfrac{1}{2} & \tfrac{1}{2}-\gamma & \gamma & 0\\
	1-\gamma & 2\gamma & 1-4\gamma & \gamma \\ 
	\cline{1-4}
	& b & 1-2b & b
	\end{array}$};
	\end{tikzpicture}\\
	\hspace*{.5em}$\gamma = 0.435866521508458999416019$ & $\gamma = \tfrac{3+2\sqrt{3}\cos(\pi/18)}{6}, ~b = \tfrac{1}{6(1-2\gamma)^2}$\\
	$b = 1.20849664917601007033648$\\
	\hspace*{.5em}$c = 0.717933260754229499708010$
\end{tabular*}
\end{table*}
\end{center}

\subsection{Explicit Singly Diagonally Implicit Runge-Kutta (ESDIRK) Methods}\label{app:esdirk}
\begin{center}
\begin{table*}[h!t]\caption{ESDIRK methods (taken from Sections 4.1.1 and 4.2.1 of Kennedy and Carpenter\cite{CAKennedy_MACarpenter_2016})\label{tab:esdirk}}
\centering
\begin{tabular*}{\textwidth}{@{\extracolsep\fill}ccc@{\extracolsep\fill}}
	\textbf{Trapezoidal rule(order 2, A-stable)} & \textbf{ESDIRK32 (order 2, L-stable)} & \textbf{ESDIRK33 (order 3, A-stable)}\\
	\begin{tikzpicture}
	\node (0,0) {$\renewcommand\arraystretch{2.5}\arraycolsep=8pt
	\begin{array}
	{c|cc}
	0 & 0 & 0\\
	1 & \tfrac{1}{2} & \tfrac{1}{2}\\
	\cline{1-3}
	& \tfrac{1}{2} & \tfrac{1}{2}
	\end{array}$};
	\end{tikzpicture} & \begin{tikzpicture}
	\node (0,0) {$\renewcommand\arraystretch{2.5}\arraycolsep=8pt
	\begin{array}
	{c|ccc}
	0 & 0 & 0 & 0\\
	2\gamma & \gamma & \gamma & 0\\
	1 & 1-b-\gamma & b & \gamma\\
	\cline{1-4}
	& 1-b-\gamma & b & \gamma
	\end{array}$};
	\end{tikzpicture} & \begin{tikzpicture}
	\node (0,0) {$\renewcommand\arraystretch{2.5}\arraycolsep=8pt
	\begin{array}
	{c|ccc}
	0 & 0 & 0 & 0\\
	2\gamma & \gamma & \gamma & 0\\
	1 & \tfrac{6\gamma-1}{4\gamma}-\gamma & \tfrac{1-2\gamma}{4\gamma} & \gamma\\
	\cline{1-4}
	& 1-b_2-b_3 & b_2 & b_3
	\end{array}$};
	\end{tikzpicture}\\
	& $\gamma = \tfrac{2-\sqrt{2}}{2}, ~b = \tfrac{1-2\gamma}{4\gamma}$ & $\gamma = \tfrac{3+\sqrt{3}}{6}, ~b_2 = \tfrac{1}{12\gamma(1-2\gamma)}, ~b_3 = \tfrac{1-3\gamma}{3(1-2\gamma)}$
\end{tabular*}
\end{table*}
\end{center}

\subsection{Diagonally Implicit Runge-Kutta (DIRK) Methods}\label{app:dirk}
\begin{center}
\begin{table*}[h!t]\caption{Gauss4 (taken from Table 5.1 of Hairer and Wanner\cite{EHairer_GWanner_2010}) and TR-BDF2 (taken from Hosea and Shampine \cite{Hosea_Shampine_1996})\label{tab:dirk}}
\centering
\begin{tabular*}{\textwidth}{@{\extracolsep\fill}cc@{\extracolsep\fill}}
	\textbf{Gauss4 (order 4,  A-stable)} & \textbf{TR-BDF2$\boldsymbol{(\theta)}$ (order 2, A-stable)}\\
	& (L-stable if $\theta = 2-\sqrt{2})$\\
	\begin{tikzpicture}
	\node (0,0) {$\renewcommand\arraystretch{1.8}\arraycolsep=8pt
	\begin{array}
	{c|cc}
	\tfrac{3-\sqrt{3}}{6} & \tfrac{1}{4} & \tfrac{3-2\sqrt{3}}{12}\\
	\tfrac{3+\sqrt{3}}{6} & \tfrac{3+2\sqrt{3}}{12} & \tfrac{1}{4} \\ 
	\hline
	& \tfrac{1}{2} & \tfrac{1}{2}
	\end{array}$};
	\useasboundingbox (0,-3.5) rectangle (2.5,.5);
	\end{tikzpicture} & \begin{tikzpicture}
	\node (0,0) {$\renewcommand\arraystretch{2.5}\arraycolsep=8pt
	\begin{array}
	{c|ccc}
	0 & 0 & 0 & 0\\
	\theta & \tfrac{\theta}{2} & \tfrac{\theta}{2} & 0\\
	1 & \tfrac{3\theta-\theta^2-1}{2\theta} & \tfrac{1-\theta}{2\theta} & \tfrac{\theta}{2}\\
	\cline{1-4}
	& \tfrac{3\theta-\theta^2-1}{2\theta} & \tfrac{1-\theta}{2\theta} & \tfrac{\theta}{2}
	\end{array}$};
	\end{tikzpicture}
\end{tabular*}
\end{table*}
\end{center}

\section*{Acknowledgments}
This work was performed under the auspices of the U.S. Department of Energy 
under grant number (NNSA) DE-NA0002376 and
Lawrence Livermore National Laboratory under contract B634212.
\bibliography{refs}

\end{document}